\numberwithin{equation}{section}
\newtheorem*{rep@theorem}{\rep@title}
\newcommand{\newreptheorem}[2]{%
\newenvironment{rep#1}[1]{%
 \def\rep@title{#2 \ref{##1}}%
 \begin{rep@theorem}}%
 {\end{rep@theorem}}}
\newtheorem*{theoremO}{Theorem}
\newtheorem{theorem}{Theorem}[section]
\newtheorem{proposition}[theorem]{Proposition}
\newtheorem{lemma}[theorem]{Lemma}
\theoremstyle{definition}
\newtheorem{definition}[theorem]{Definition}
\newtheorem{example}[theorem]{Example}
\theoremstyle{remark}
\newtheorem{remark}[theorem]{Remark}
\newtheorem*{acknowledgements}{Acknowledgements}
\newcommand{\RR}{\ensuremath{\mathbbmss{R}}}
\newcommand{\TT}{\ensuremath{\mathbbmss{T}}}
\def\Dscr{\mathscr{D}}
\def\Iscr{\mathscr{I}}
\newcommand{\gfrak}{\mathfrak{g}}
\newcommand{\Ad}{\operatorname{Ad}}
\newcommand{\End}{\operatorname{End}}
\newcommand{\Hom}{\operatorname{Hom}}
\newcommand{\CE}{\operatorname{CE}}
\newcommand{\GL}{\operatorname{GL}}
\newcommand{\Rep}{\operatorname{\mathbf{Rep}}}
\newcommand{\Mod}{\operatorname{\mathbf{Mod}}}
\def\uC{{\mathrm C}}
\def\uS{{\mathrm S}}
\def\id{{\rm id}}
\def\us{{\sf s}}
\def\uu{{\sf u}}
\def\AW{{\sf{AW}}}
\def\EZ{{\sf{EZ}}}
\let\originalleft\left
\let\originalright\right
\renewcommand{\left}{\mathopen{}\mathclose\bgroup\originalleft}
\renewcommand{\right}{\aftergroup\egroup\originalright}
\DeclareFontFamily{U}{matha}{\hyphenchar\font45}
\DeclareFontShape{U}{matha}{m}{n}{
      <5> <6> <7> <8> <9> <10> gen * matha
      <10.95> matha10 <12> <14.4> <17.28> <20.74> <24.88> matha12
      }{}
\DeclareSymbolFont{matha}{U}{matha}{m}{n}
\DeclareMathSymbol{\abxcup}{\mathbin}{matha}{'131}
\newcommand*{\sbullet}{\raisebox{0.1ex}{\scalebox{0.6}{$\bullet$}}}
\DeclareMathAlphabet{\mathbfit}{OML}{cmm}{b}{it}
\begin{document}

\title{Singular chains on Lie groups and the Cartan relations I}

\author{Camilo Arias Abad\footnote{Escuela de Matem\'{a}ticas, Universidad Nacional de Colombia Sede Medell\'{i}n, email: carias0@unal.edu.co}}

 \maketitle
 
 \begin{abstract}
Let $G$ be a simply connected Lie group with Lie algebra $\gfrak$. We show that the following categories are naturally equivalent. The category $\Mod(\uC_{\sbullet}(G))$ of sufficiently smooth modules over the DG algebra of singular chains on $G$. The category $\Rep(\TT\gfrak)$ of representations of the DG Lie algebra $\TT\gfrak$, which is universal for the Cartan relations. This equivalence extends the correspondence between representations of $G$ and representations of $\gfrak$. In a companion paper, we will show that in the compact case, the equivalence can be extended to a quasi-equivalence of DG categories.
\end{abstract}

\tableofcontents

\section{Introduction}
Let $G$ be a simply connected Lie group with Lie algebra $\gfrak$. We will denote by $\uC_{\sbullet}(G)$ the space of smooth singular chains on $G$. This space has the structure of a DG Hopf algebra with product induced by the Eilenberg-Zilber map and coproduct induced by the Alexander-Whitney map. In particular, $\uC_{\sbullet}(G)$ is an algebra whose category of modules is a tensor category. We denote by $\Mod(\uC_{\sbullet}(G))$ the category of sufficiently smooth modules over this algebra. An object $V \in \Mod(\uC_{\sbullet}(G))$ receives an action of the algebra of singular chains on $G$. In particular, it receives an action of the zero simplices of $G$ and is therefore a representation of $G$. Thus, $V$ is a cochain complex on which not only the points of $G$ but all simplices of $G$ act.

We address the problem of describing this category infinitesimally. We consider the DG-Lie algebra $\TT\gfrak$ which is universal for the Cartan relations, and its category of representations $\Rep(\TT\gfrak)$. Representations of this algebra are sometimes known as $\gfrak$-DG spaces in the literature. They arise in Chern-Weil theory and the infinitesimal models for equivariant cohomology \cite{GS, AM}.

Our main result is the following.

\begin{theoremO}\label{thm:main}
Let $G$ be a simply connected Lie group with Lie algebra $\gfrak$. There is a a differentiation functor \[\Dscr\colon \Mod(\uC_{\sbullet}(G)) \rightarrow \Rep(\TT\gfrak)\] and an integration functor \[\Iscr\colon \Rep(\TT\gfrak) \rightarrow \Mod(\uC_{\sbullet}(G))\]  which are inverse to one another. In particular, the categories $\Mod(\uC_{\sbullet}(G))$ and $\Rep(\TT\gfrak)$ are naturally equivalent as monoidal categories.
\end{theoremO}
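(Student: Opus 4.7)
The plan is to build both functors explicitly and then verify that they are mutually inverse, the crucial input being that simple connectedness of $G$ allows unique integration of infinitesimal data. The differentiation functor $\Dscr$ is constructed by differentiating the action of simplices near the identity. Given $V \in \Mod(\uC_{\sbullet}(G))$, its underlying complex is taken unchanged. For $X \in \gfrak$, set
\[
L_X v \;:=\; \frac{d}{dt}\bigg|_{t=0} \exp(tX)\cdot v, \qquad
\iota_X v \;:=\; \frac{d}{dt}\bigg|_{t=0} \gamma^X_t\cdot v,
\]
where $\gamma^X_t$ is the $1$-simplex $s\mapsto \exp(stX)$; the ``sufficiently smooth'' hypothesis ensures these derivatives exist. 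Differentiating the DG Hopf structure of $\uC_{\sbullet}(G)$ and associativity of the module action should translate into the Cartan relations between $d_V$, $L_X$, and $\iota_X$, promoting $V$ to a $\TT\gfrak$-representation.

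The integration functor $\Iscr$ is built from the Maurer--Cartan form $\theta \in \Omega^1(G;\gfrak)$. Given $W \in \Rep(\TT\gfrak)$ and a smooth $k$-simplex $\sigma : \Delta^k \to G$, the pullback $\sigma^*\theta$ is a $\gfrak$-valued $1$-form on $\Delta^k$; composing with the representation yields a form with values in $\End^{-1}(W)\oplus \End^0(W)$. The operator $\Iscr(W)(\sigma)$ is then defined by a Chen-style iterated integral combining holonomy for the $L$-component with contractions by the $\iota$-component integrated against the fundamental class of $\Delta^k$. Compatibility of $\Iscr$ with the simplicial boundary should follow from the Maurer--Cartan equation $d\theta + \tfrac12[\theta,\theta]=0$ together with Cartan's magic formula applied to the integrand, while compatibility with the Eilenberg--Zilber product and the Alexander--Whitney coproduct reduces to multiplicativity and simplicial naturality of the assignment $\sigma \mapsto \sigma^*\theta$.

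The heart of the theorem is the pair of round-trip identities. For $\Dscr \circ \Iscr \simeq \id$, a first-order expansion of the iterated integral around the constant simplex at $e$ should recover $L_X$ and $\iota_X$ from the linear Taylor term of $\sigma^*\theta$, and is expected to be direct. The main obstacle is $\Iscr \circ \Dscr \simeq \id$, showing that the full $\uC_{\sbullet}(G)$-module structure on $V$ is forced by its Cartan operators. On zero-chains this is the classical integration of Lie algebra representations, which genuinely uses $\pi_1(G)=1$; on one-chains, Cartan's magic formula expresses the action of a short path as a first-order $L$-flow corrected by an $\iota$-boundary term, and simple connectedness propagates this uniquely to all paths. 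For higher simplices I would argue inductively: contracting a $k$-simplex onto a vertex produces a homotopy whose infinitesimal generator is a family of $(k-1)$-chains, reducing the determination of the action to the previous step. The monoidal compatibility then follows essentially formally, since the Maurer--Cartan form on $G \times G$ splits additively under the diagonal, matching the primitive coproduct $\Delta(\iota_X) = \iota_X \otimes 1 + 1 \otimes \iota_X$ (and similarly for $L_X$) on $\TT\gfrak$ that governs the tensor product of representations.
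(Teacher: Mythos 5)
Your construction of $\Dscr$ and the easy round trip $\Dscr\circ\Iscr=\id$ match the paper, and your integration functor is in the right spirit: the paper packages the iterated-integral data as a single $\End(V)$-valued \emph{representation form} $\Phi_V=\sum_{k}\Phi_V^{(k)}$, the unique $G$-equivariant form with $\Phi_V^{(k)}(e)(x_1,\dots,x_k)=\rho(i_{x_1})\circ\cdots\circ\rho(i_{x_k})$, and sets $\Iscr(\rho)(\sigma)=\int_{\Delta_k}\sigma^*\Phi_V$; the identities $d\Phi^{(k)}=(-1)^{k}\delta\Phi^{(k+1)}$ and $\mu^*\Phi^{(k)}=\sum_{i+j=k}(-1)^{ij}\pi_1^*\Phi^{(i)}\wedge\pi_2^*\Phi^{(j)}$ play the role of your Maurer--Cartan and multiplicativity arguments, and Stokes' theorem plus the shuffle decomposition of $\Delta_r\times\Delta_s$ gives compatibility with the differential and the Eilenberg--Zilber product.

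The genuine gap is in the hard direction $\Iscr\circ\Dscr=\id$. Your proposed induction (``contract a $k$-simplex onto a vertex, reducing to $(k-1)$-chains'') does not determine $\rho(\sigma)$ from lower-dimensional data: the prism decomposition of a contraction relates $\rho(\sigma)$ to the action on a $(k+1)$-chain, and the Eilenberg--Zilber product of lower-degree chains likewise produces \emph{higher}-degree chains, so neither the boundary relation nor multiplicativity closes the induction. The mechanism the paper actually uses is a de Rham--type rigidity statement (Theorem \ref{A}, adapted from Kock): a singular cochain that is smooth, alternating, subdivision invariant and vanishes on thin simplices is the integral of a unique differential form. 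This is exactly why conditions (b)--(d) are built into the definition of $\Mod(\uC_{\sbullet}(G))$: they force every matrix coefficient of the module to be integration of a form, hence the whole action is $\sigma\mapsto\int_{\Delta_k}\sigma^*\Phi$ for some $\Phi$, which Proposition \ref{mult} identifies as a representation form; Lemma \ref{injective} (a representation form is determined by its degree $0$ and $1$ components, via the $p$-fold product formula of Lemma \ref{muk}) then forces $\Phi=\Phi_{\Dscr(V)}$. Your sketch never invokes these axioms, and without them --- or an equivalent characterization of the image of the integration map --- the claim that the Cartan operators determine the full $\uC_{\sbullet}(G)$-module structure is not established.
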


We show that the forgetful functor $F\colon \Rep(\TT\gfrak) \rightarrow \Rep(\gfrak)$ admits a left adjoint $U\colon \Rep(\gfrak) \rightarrow \Rep(\TT\gfrak)$ which embeds the category of representations of $\gfrak$  in the category $\Rep(\TT\gfrak)$. The integration functor mentioned above makes the following diagram commute:
\[ \xymatrix{ \Rep(\gfrak) \ar[r]^{\mathsf{Lie}^{-1}} \ar[d]_-{U} & \Rep(G) \ar[d]^-{U}\\
\Rep(\TT\gfrak)\ar[r]^-{\Iscr} & \Mod(\uC_{\sbullet}(G))
}\]
\noindent
Thus, the construction in Theorem \ref{thm:main} extends the usual correspondence between representations of $\gfrak$ and representations of $G$.

\subsection*{Relationship with higher local systems and Chern-Weil theory}
The equivalence of categories $\Rep(\TT\gfrak) \cong \Mod(\uC_{\sbullet}(G))$ can be interpreted in terms of Chern-Weil theory for $\infty$-local systems. A local system on a topological space $X$ is a representation of the fundamental groupoid of $X$. Recently there has been interest in a theory of higher dimensional local systems where the fundamental groupoid is replaced by the $\infty$-groupoid, $\pi_\infty(X)$, of $X$. See for instance \cite{Holstein1,BS,AS2013,AS2016}. These $\infty$-local systems can be described in different ways:
\begin{center}
  \begin{tabular}{| c |  c | c | }
\hline 
   & {\bf Point of view}   & {\bf $\infty$-local system}   \\ \hline
   { $\boldsymbol{\mathsf{I}}$ }& Infinitesimal & Flat $\mathbb{Z}$-graded connection \\ 
{$\boldsymbol{\mathsf{S}}$} & Simplicial & Representation of $\pi_\infty(X)$\\ 
{$\boldsymbol{\mathsf{T}}$} &  Topological & Representation of $\uC_{\sbullet}(\Omega^{\mathrm{M}}(X)) $ \\   
\hline
  \end{tabular}
\end{center}
Here $\Omega^{\mathrm{M}}(X)$ denotes the based Moore loop space of $X$. Each of these notions of $\infty$-local system can be organized into a DG category. The resulting DG categories have been proven to be quasi-equivalent. The equivalence of $\boldsymbol{\mathsf{I}}$ and $\boldsymbol{\mathsf{S}}$ was proved by Block-Smith \cite{BS} extending ideas of Igusa \cite{Igusa}. The equivalence between $\boldsymbol{\mathsf{S}}$ and $\boldsymbol{\mathsf{T}}$ was proved by Holstein \cite{Holstein1}, using the homotopy theory of DG categories. 

One can consider the case $X= BG$ and take the point of view $\boldsymbol{\mathsf{T}}$ of local systems as modules over $\uC_{\sbullet}(\Omega^{\mathrm{M}}(X))$. Since $\Omega^{\mathrm{M}}(BG)$ is homotopy equivalent to $G$, one can think of the category $\Mod(\uC_{\sbullet}(G))$ as describing $\infty$-local systems on $BG$. Thus, Theorem \ref{thm:main} may be thought of as providing an infinitesimal description of the category of $\infty$-local systems on the classifying space of $G$. If $G$ is further assumed to be compact, stronger results can be obtained. The categories $\Rep(\TT\gfrak)$ and $\Mod(\uC_{\sbullet}(G))$ can be naturally enhanced to DG categories which are quasi-equivalent.
In particular, when this quasi-equivalence is applied to the endomorphisms of the trivial local system, one recovers the standard computation from Chern-Weil theory $\mathrm{H}^{\sbullet}(BG) \simeq (\uS^{\sbullet}\gfrak^*)^G$. Indeed, the Chern-Weil homomorphism can be extended to a Chern-Weil functor for any principal $G$-bundle with connection. The details of these constructions will be explained in \cite{CA,CAII}.

The paper is organized as follows:

\begin{itemize}
\item In  \S \ref{Prelim} we fix our conventions and collect some preliminaries regarding Chevalley-Eilenberg complexes, DG Hopf Algebras and the algebraic structure on the algebra of singular chains on a Lie group.

 \item  Our main results are proved in \S \ref{equiv}. Theorem \ref{dif} describes the differentiation functor \[\Dscr\colon \Mod(\uC_{\sbullet}(G)) \rightarrow \Rep(\TT\gfrak).\]  Theorem \ref{int} describes the integration functor \[\Iscr\colon \Rep(\TT\gfrak) \rightarrow \Mod(\uC_{\sbullet}(G)).\] Theorem \ref{main} states that these functors are equivalences. We also prove Theorem \ref{nosc}, which is a version of the equivalence of categories in the case where $G$ is not necessarily simply connected.
 
 \item In  \S \ref{examples} we discuss examples of the constructions above. We prove Proposition \ref{K}  which describes the left adjoint to the forgetful functor  $F\colon \Rep(\TT\gfrak) \rightarrow \Rep(\gfrak)$.
  
 \item Appendix \ref{A1} is concerned with a variation of the characterization of the image of the De Rham map via synthetic geometry by A. Kock \cite{Kock}.
 
\end{itemize}

\begin{acknowledgements}
We would like to acknowledge the support of Colciencias through  their grant {\it Estructuras lineales en topolog\'ia y geometr\'ia}, with contract number FP44842-013-2018.  We also thank  the Alexander von Humboldt foundation which supported our work through the Humboldt Institutspartnerschaftet { \it Representations of Gerbes and higher holonomies}. 
We are grateful to Anders Kock for pointing us to his book \cite{Kock}, where the image of the De Rham map is described.
We thank Alexander Quintero V\'elez and Eckhard Meinrenken for their important comments on a previous draft. We also thank Konrad Waldorf for his hospitality during a visit to Greifswald, where part of this work was completed. \end{acknowledgements}


\section{Conventions and preliminaries}\label{Prelim}
The following conventions are in place throughout the paper:
\begin{itemize}
\item All vector spaces and algebras are defined over the field $\RR$ of real numbers. The category of graded vector spaces is symmetric monoidal with respect to the natural isomorphism:
\[ \tau\colon V \otimes W \rightarrow W \otimes V, \quad v \otimes w\mapsto (-1)^{\vert v\vert \vert w \vert} w \otimes v.\]
\item Given a graded vector space $V$, the suspension of $V$, denoted $sV$, is the graded vector space such that
\[ (\us V)^k= V^{k-1}.\]
 The unsuspension of $V$, denoted $uV$, is the graded vector space such that
\[ (\uu V)^k= V^{k+1}.\]
\item Differentials on complexes, DG-algebras and DG-Lie algebras increase the degree by one.
\item Tensor products are taken over the field $\RR$ of real numbers unless otherwise stated.
\item Given a graded vector space $V$, we denote by $\uS^{\sbullet}V$ the free graded commutative algebra generated by $V$.
\item The commutator symbol $[a,b]$ denotes the graded commutator $ab-(-1)^{\vert a\vert \vert b \vert}ba$.
\item The Lie algebra $\gfrak$ of a Lie group $G$ is the Lie algebra of left invariant vector fields on $G$.
\item We will denote by $\Delta_k$ the standard geometric $k$-simplex
\[ \Delta_k =\{(t_1,...,t_k)\in \mathbb{R}^{k}: 1 \ge t_1 \ge t_2 \ge \cdots \ge t_k \ge 0\} .\]
The $i$-th vertex of $\Delta_k$ is
\[ v_i = (\underbrace{1, \dots, 1}_{i\text{ times}}, 0, \dots, 0).\]
The $i$-th face  of $\Delta_k$ is the convex closure of the vertices $\{ v_0, \dots \hat{v}_{n-i}, \dots ,v_{k}\}$. For each $i\leq k$ we define by $f_i^k\colon \Delta_i \rightarrow \Delta_k$ the inclusion of the $i$-simplex as the lowest $i$-th face of the
$k$-simplex, namely, the unique affine map that sends the $j$-th vertex of $\Delta_i$ to the $j$-th vertex of $\Delta_k$.
Similarly, we define by $b_i^k\colon \Delta_i \rightarrow \Delta_k$ the inclusion of the $i$-simplex as the highest $i$-th face of the $k$-simplex, namely, the unique affine map that sends the $j$-the vertex of $\Delta_i$ to the $j+k-i$-th vertex of $\Delta_k$.
\item If $V$ is a graded vector space, then the space of $V$-valued differential forms on a manifold $M$ is
\[ \Omega^{\sbullet}(M,V)=\Omega^{\sbullet}(M) \otimes V.\]
\end{itemize}

\subsection{Chevalley-Eilenberg complexes and the DG-Lie algebra $\TT\gfrak$}
Let $\gfrak$ be a finite dimensional Lie algebra. The Chevalley-Eilenberg algebra of $\gfrak$ is the differential graded algebra $\CE(\gfrak)$ defined as follows. As an algebra:
\[ \CE(\gfrak)=  (\uS^{\sbullet}(\uu\gfrak))^* \cong \uS^{\sbullet}(\uu\gfrak)^*,\]
is the free commutative algebra generated by the dual of the vector space $\uu\gfrak$. The differential is given by
\begin{eqnarray*} (\delta_{\CE}\eta)(v_1,\dots, v_{k+1})=\sum_{i<j}(-1)^{i+j}\eta([v_i,v_j], \dots ,\widehat{v}_i,\dots , \widehat{v}_j,\dots , v_{k+1}).
\end{eqnarray*}
Given a representation $V$ of $\gfrak$, there is a Chevalley-Eilenberg complex with values in $V$,  $\CE(\gfrak,V)=V\otimes \CE(\gfrak)$ with differential given by
\begin{eqnarray*} (\delta \omega)(v_1,\dots, v_{k+1})=\sum_{i<j}(-1)^{i+j}\omega([v_i,v_j], \dots ,\widehat{v}_i,\dots , \widehat{v}_j,\dots , v_{k+1}).\\+\sum_{i}(-1)^{i+1}\rho(v_i) \omega(v_1,\dots \widehat{v}_i,\dots ,v_{k+1}).
\end{eqnarray*}
The vector space $\CE(\gfrak,V)$ is a module over $\CE(\gfrak)$ and the differential is a derivation with respect to the module structure, that is,
\[ \delta(\eta \omega)= (\delta_{\CE}\eta) \omega +(-1)^{\vert \eta \vert}\eta (\delta\omega).\]

Given a representation $V$ of $\gfrak$ there is also a homology chain complex $\uC_{\sbullet}(\gfrak,V)$ which is defined as follows. As a vector space, 
\[ \uC_{\sbullet}(\gfrak,V)=\uS^{\sbullet}(\uu\gfrak)\otimes V.\]
The differential is given by the formula:
\begin{eqnarray*}
 \delta(x_1 \wedge \dots \wedge x_k \otimes v)&=&\sum_{i<i}(-1)^{i+j+1}[x_i,x_j] \wedge \dots \widehat{x}_i \wedge \dots \wedge \widehat{x}_j \wedge \dots x_k \otimes v+\\
 &&\sum_i (-1)^{i+1} x_1 \wedge \dots \wedge \widehat{x}_i \wedge \dots \wedge x_k \otimes \rho(x_i)(v). 
\end{eqnarray*}

\begin{definition}
Let $\gfrak$ be a Lie algebra. The differential graded Lie algebra $\TT\gfrak$ is defined as follows.
As a vector space
\[ \TT\gfrak = \uu \gfrak\oplus \gfrak.\]
Given $x\in \gfrak$ we denote by $L_x$ the corresponding element in $\TT\gfrak^0$ and by $i_x$ the corresponding element in $\TT\gfrak^{-1}$. The differential is given by $d(i_x) =L_x$. The bracket in $\TT\gfrak$ is given by the Cartan relations
\begin{align}
[L_x,L_y]&=L_{[x,y]},\\
[L_x,i_y]&=i_{[x,y]},\\
[i_x,i_y]&=0.
\end{align}
We will denote by $\Rep(\TT\gfrak)$ the category of representations of $\TT\gfrak$ on finite dimensional cochain complexes.
\end{definition}

\subsection{Singular chains on Lie groups and DG-Hopf algebras}\label{chains}
A differential graded Hopf algebra is a graded vector space $A$ together with a product map $m: A \otimes A \rightarrow A$, a coproduct map $\Delta\colon A \rightarrow A \otimes A$, a unit map $u\colon \RR \rightarrow A$, a counit map $c\colon A \rightarrow \RR$, an antipode $s\colon A \rightarrow A$  and a differential $d\colon A \rightarrow A$, such that
\begin{itemize}
\item $(A,m,u,d)$ is a DG algebra.
\item $(A, \Delta,c,d)$ is a DG coalgebra.
\item $c \circ u= \id_{\RR}$.
\item $m \circ \Delta= (m \otimes m) \circ ( \id \otimes \tau \otimes \id)\circ (\Delta \otimes \Delta)$.
\item $c$ is an algebra map.
\item $u$ is a coalgebra map.
\item $m \circ (\id \otimes s )\circ \Delta=u \circ c$.
\item $m \circ (s \otimes \id) \circ \Delta= u \circ c$.
\end{itemize}
We say that $A$ is commutative if $m \circ \tau=m$ and it is cocommutative if $ \tau \circ \Delta=\Delta$. 

Given a DG Hopf algebra $A$, the tensor product $V \otimes W$ of DG $A$-modules has the structure of an $A$-module via the map:
\[ \rho_{V \otimes W}\colon A \rightarrow \End(V \otimes W),\quad \rho_{V \otimes W}(a)= \mathsf{nat} \circ (\rho_V \otimes \rho_W) \circ \Delta,\]
where $\mathsf{nat}$ is the natural map
\[ \mathsf{nat}\colon \End(V) \otimes \End(W) \rightarrow \End(V \otimes W),\]
given by
\[ \mathsf{nat}(\phi \otimes \varphi)(v \otimes w) = (-1)^{\vert v\vert \vert \varphi \vert}\phi(v) \otimes \varphi(w).\]
In this way, the category of DG modules over a DG Hopf algebra acquires the structure of a monoidal category. If $A$ is cocommutative then this monoidal category is symmetric. If $V$ is a module over a DG Hopf algebra $A$, the dual complex $V^*$ is also a module over $A$ with structure given by
\[[\rho_{V^*}(a)(\phi)](v) =\phi( \rho_V(s(a))(v)).\]
This dual representation has the property that the natural pairing
\[ V \otimes V^* \rightarrow \RR\]
is a morphism of modules.

\begin{example}[{\bf Trivial module}]
Let $V$ be a cochain complex of finite dimensional vector spaces. Then $V$ can be seen as a trivial DG-module over $A$ via the composition
$$
\xymatrix{
A \ar[r]^-{c}&
\RR
 \ar[r]^-{u} &
\End(V).
}
$$
where $c$ is the counit of $A$ and $u$ is the unit of $\End(V)$.
\end{example}

\begin{example}
Let $\mathfrak{h}$ be a DG Lie algebra. The universal enveloping algebra $U(\frak{h})$ is a cocommutative DG Hopf algebra with antipode and coproduct determined by the conditions $s(x)=-x$ and
 $\Delta(x)=1 \otimes x + x \otimes 1$ for $x \in \mathfrak{h}$. It is always cocommutative and commutative if and only if $\mathfrak{h}$ abelian.
\end{example}

Given a Lie group $G$, we will denote by $\uC_{\sbullet}(G)$  the cochain complex of smooth singular chains on $G$. This space has the structure of a DG-algebra with product map \[m: \uC_{\sbullet}(G) \otimes \uC_{\sbullet}(G) \rightarrow \uC_{\sbullet}(G)\] given by the composition
$$
\xymatrix{
\uC_{\sbullet}(G)\otimes \uC_{\sbullet}(G) \ar[rr]^(0.55){\EZ}&&
\uC_{\sbullet}(G \times G) \ar[rr]^(0.6){\mu_*} &&
\uC_{\sbullet} (G).
}
$$
Here $\mu\colon G \times G \rightarrow G$ is the multiplication map, and $\EZ$ is the Eilenberg-Zilber map defined by
\begin{eqnarray*}
 \EZ(\sigma \otimes \nu) &=& \sum_{\chi \in \mathfrak{S}_{r,s}}(-1)^{|\chi|} (\sigma \times \nu)\circ \chi_*,
\end{eqnarray*}
where $\sigma \in \uC_r(G)$, $\nu \in \uC_s(G)$ and $\mathfrak{S}_{r,s}$ is the set of all shuffle permutations and $\chi_*$ is the map
\[ \chi_*\colon\Delta_{r+s}\to \Delta_r \times \Delta_s, \quad \chi_*(t_1,\dots,t_{r+s})=((t_{\chi(1)},\dots, t_{\chi(r)}),(t_{\chi(r+1)},\dots, t_{\chi(r+s)})).\]
The map $ \chi_*$ is a homeomorphism onto its image. Moreover, the union over all shuffle permutations of the images of the maps $ \chi_*$ cover $\Delta_r \times \Delta_s$ and the intersection of two different images has measure zero. 

The inversion map $\iota\colon G \to G$ induces a map:
\[ s=\iota_*\colon \uC_{\sbullet}(G) \to \uC_{\sbullet}(G).\]
There is also a coproduct $\Delta: \uC_{\sbullet}(G) \rightarrow \uC_{\sbullet}(G) \otimes \uC_{\sbullet}(G)$ defined as the composition
\[\xymatrix{
\uC_{\sbullet}(G) \ar[rr]^(0.45){\mathsf{diag}_*}&&
\uC_{\sbullet}(G \times G) \ar[rr]^(0.45){\AW} &&
\uC_{\sbullet} (G) \otimes \uC_{\sbullet}(G),
}\]
where $\mathsf{diag_*}$ is the map induced by the diagonal map and $\AW$ is the Alexander-Whitney map. This is the map
\[ \AW\colon \uC_{\sbullet}(X \times Y ) \rightarrow \uC_{\sbullet}(X) \otimes \uC_{\sbullet}(Y)\]
defined as follows. If $\sigma=(\sigma_1, \sigma_2)$ is a $k$-simplex in $X \times Y$ then:
\[ \AW(\sigma)=\sum_{i+j=k} (\sigma_1 \circ f^k_i) \otimes (\sigma_2 \circ b^k_i),\]
where $f^k_i$ and $b^k_i$ are the highest and lowest simplex maps defined above. 

\begin{example}
The space $\uC_{\sbullet}(G)$  has the structure of a DG Hopf algebra. The product, coproduct and antipode are the operators $m$, $\Delta$ and $s$ defined above. The unit is induced by the inclusion of a point as the identity. The counit is given by the map induced by the projection to a point. In general, this Hopf algebra is neither commutative nor cocommutative.
\end{example}

\begin{definition}
We will say that a simplex $\sigma\colon \Delta_k \rightarrow M$ is thin if the rank of the differential of $\sigma$ is less than $k$ at every point $p \in \Delta_k$. 
\end{definition}

\begin{definition}
We will denote by $\Mod(\uC_{\sbullet}(G))$ the category of left DG modules $\rho\colon \uC_{\sbullet}(G) \rightarrow \End(V)$ over the algebra $\uC_{\sbullet}(G)$ with the following properties:
\begin{enumerate}
\item[(a)] $V$ is a complex of finite dimensional vector spaces.
\item[(b)] The map $\rho\colon \uC_{\sbullet}(G) \rightarrow \End(V)$ vanishes on thin simplices.
\item[(c)] The map $\rho$ is smooth. That is, given a manifold $M$ and a smooth map
$\sigma_M\colon M \times \Delta_k \rightarrow G$,
the corresponding map
$\rho(\sigma_M)\colon M \rightarrow \End(V),$
given by
$ \rho(\sigma_M)(p)= \rho( \sigma_M(p,-))$
is smooth.
\item[(d)] Given a basis $\{ v_1, \dots, v_k\}$ of $V$, the corresponding cochains $a_{ij}$ are alternating and subdivision invariant in the sense of the Appendix \ref{A1}.
\end{enumerate}
\end{definition}

\section{An equivalence of categories of representations}\label{equiv}
In this section we prove our main results. We construct a functor $\Dscr\colon\Mod(\uC_{\sbullet}(G)) \rightarrow\Rep(\TT\gfrak)$, a functor $\Iscr\colon \Rep(\TT\gfrak) \rightarrow \Mod(\uC_{\sbullet}(G))$, and prove that they are inverses to one another.


\subsection{The differentiation functor}
Let $x_1, \dots, x_k$ be elements of the Lie algebra $\gfrak$ of the group $G$. We denote by $\sigma[x_1, \cdots, x_k]$
the simplex $\sigma[x_1, \dots,x_k]\colon \Delta_k \rightarrow G$ given by
\[ \sigma[x_1, \dots,x_k](t_1,\dots,t_k)= \exp(t_1 x_1) \cdots \exp(t_k x_k).\]

\begin{lemma}\label{square}
For any $x \in \gfrak$ the chain $\sigma[x]$ satisfies
\[ \sigma[x]^2=0.\]
\end{lemma}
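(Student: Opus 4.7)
The plan is to compute $\sigma[x]^2 = \mu_* \circ \EZ(\sigma[x]\otimes\sigma[x])$ directly from the definition of the product on $\uC_{\sbullet}(G)$ and observe a cancellation of two identical $2$-simplices with opposite signs.

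First I would enumerate the shuffles. Since $\sigma[x]$ is a $1$-simplex, the set $\mathfrak{S}_{1,1}$ consists of exactly two elements: the identity and the transposition $(12)$, contributing signs $+1$ and $-1$ respectively. Expanding the Eilenberg--Zilber formula gives
\[ \EZ(\sigma[x]\otimes\sigma[x]) = \tau_1 - \tau_2, \]
where $\tau_1,\tau_2 \colon \Delta_2 \to G\times G$ are
\[ \tau_1(t_1,t_2) = \bigl(\exp(t_1 x),\exp(t_2 x)\bigr), \qquad \tau_2(t_1,t_2) = \bigl(\exp(t_2 x),\exp(t_1 x)\bigr). \]

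Next I would push these forward along the multiplication $\mu\colon G\times G\to G$. Because $t_1 x$ and $t_2 x$ commute in $\gfrak$, the exponential map restricted to the line $\RR x$ is a group homomorphism, so
\[ \mu\circ\tau_1(t_1,t_2) = \exp(t_1 x)\exp(t_2 x) = \exp\bigl((t_1+t_2)x\bigr) = \exp(t_2 x)\exp(t_1 x) = \mu\circ\tau_2(t_1,t_2). \]
Hence $\mu_*\tau_1$ and $\mu_*\tau_2$ are literally the same singular $2$-simplex of $G$, and therefore $\sigma[x]^2 = \mu_*\tau_1 - \mu_*\tau_2 = 0$ as an element of $\uC_2(G)$.

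There is no real obstacle here; the only thing to be careful about is the correct bookkeeping of the shuffle signs and the orientation conventions for $\chi_*\colon \Delta_2 \to \Delta_1\times\Delta_1$. Note also that the equality $\mu_*\tau_1 = \mu_*\tau_2$ holds on the nose as singular chains, so the conclusion is stronger than vanishing modulo thin simplices; the identity $\sigma[x]^2=0$ already holds in the DG algebra $\uC_{\sbullet}(G)$ itself.
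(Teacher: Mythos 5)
Your proposal is correct and takes essentially the same route as the paper: both expand $\mu_*\circ\EZ(\sigma[x]\otimes\sigma[x])$ into the two shuffle $2$-simplices with opposite signs and observe that they coincide on the nose because $\exp(t_1x)$ and $\exp(t_2x)$ commute. The paper's proof is just a terser version of your computation.
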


\begin{proof}
The chain $ \sigma[x]^2$ is the difference between the chains
$\alpha(t,s)=\sigma[x](t) \sigma[x](s)$ and
$\beta(t,s)=\sigma[x](s) \sigma[x](t).$ These are equal, as the following computation shows:
$$ \alpha(t,s)=\exp(tx) \exp(sx)=\exp(sx) \exp(tx) =\beta(t,s). \qedhere$$
\end{proof}

\begin{remark}
Since $\uC_{\sbullet}(G)$ is a DG Hopf algebra, the category $\Mod(\uC_{\sbullet}(G))$ is monoidal. Also, since $U(\TT\gfrak)$ is a cocommutative DG Hopf algebra, the category $\Rep(\TT\gfrak)$ is symmetric monoidal. Explicitly, if $V, V' \in \Rep(\TT\gfrak)$ then $V \otimes V'$ is also a representation with action given by
\begin{equation}\label{tensor}
 L_x(v \otimes v') =L_x v\otimes w+ v \otimes L_x v', \quad i_x(v\otimes v')= i_x v \otimes v' +(-1)^{\vert v\vert} v \otimes i_x v'.  
\end{equation}
\end{remark}

\begin{theorem}\label{dif}
Let $V$ be a module in $\Mod(\uC_{\sbullet}(G))$ with structure given by
 $\rho\colon \uC_{\sbullet}(G) \rightarrow \End(V)$. There is a representation $\Dscr(\rho)\colon\TT\gfrak \rightarrow \End(V)$ given by
 \[\Dscr(\rho)(L_x)=\frac{d}{dt}\Big\rvert_{t=0} \rho(\exp(tx)),\]
and
\[ \Dscr(\rho)(i_x)=\frac{d}{dt}\Big\rvert_{t=0} \rho(\sigma[tx]).\]
Moreover, if $\phi\colon V \rightarrow W$ is a morphism in $\Mod(\uC_{\sbullet}(G))$, it is also a morphism in $\Rep(\TT\gfrak)$. One concludes that there is a functor
\[ \Dscr\colon \Mod(\uC_{\sbullet}(G)) \rightarrow \Rep(\TT\gfrak).\]
The functor $\Dscr$ is monoidal.
\end{theorem}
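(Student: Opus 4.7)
My strategy is to verify, in turn: (i) well-definedness and linearity of $L_x$ and $i_x$, (ii) the compatibility of $\Dscr(\rho)$ with the differential together with the three Cartan bracket relations, (iii) naturality with respect to morphisms, and (iv) the monoidal structure.

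For (i), condition (c) applied to the smooth family $(t, x, u) \mapsto \exp(tux)$ guarantees that $t \mapsto \rho(\exp(tx))$ and $t \mapsto \rho(\sigma[tx])$ are smooth $\End(V)$-valued functions, depending smoothly on $x$ as well, so their derivatives at $t = 0$ exist. The map $x \mapsto i_x$ is linear because $\rho(\sigma[tx])$ vanishes at $t = 0$: indeed $\sigma[0]$ is the constant $1$-simplex at $e$, hence thin, so condition (b) forces $\rho(\sigma[0]) = 0$, and the $t$-derivative at $0$ of a jointly smooth map vanishing on the slice $\{t = 0\}$ is linear in the remaining variable. Linearity of $x \mapsto L_x$ is similarly standard.

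For (ii), the differential-compatibility condition $[d_V, i_x] = L_x$ (from which $[d_V, L_x] = 0$ follows automatically via $d_V^2 = 0$) drops out of the chain-map identity $[d_V, \rho(c)] = \rho(\partial c)$ applied to $c = \sigma[tx]$: since $\partial \sigma[tx] = \exp(tx) - e$, differentiation at $t = 0$ yields $L_x$. The bracket $[L_x, L_y] = L_{[x,y]}$ is the classical Lie group / Lie algebra correspondence applied to the smooth $G$-representation obtained by restricting $\rho$ to $\uC_0(G)$. The vanishing $[i_x, i_y] = 0$ I will extract from Lemma \ref{square} by polarization: since $\rho(\sigma[tx])^2 = \rho(\sigma[tx]^2) = 0$ for all $t$, computing its second $t$-derivative at $0$ and using $\rho(\sigma[0]) = 0$ to discard cross terms gives $i_x^2 = 0$; applying this with $x + y$ in place of $x$ and invoking linearity of $i$ yields $i_x i_y + i_y i_x = 0$. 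Finally, for $[L_x, i_y] = i_{[x,y]}$ I will conjugate the $1$-chain $\sigma[sy]$ by the $0$-chain $\exp(tx)$: a direct computation with the product on $\uC_\sbullet(G)$ gives
\[ \exp(tx) \cdot \sigma[sy] \cdot \exp(-tx) = \sigma\bigl[s\, \Ad_{\exp(tx)}(y)\bigr].\]
Applying $\rho$, differentiating in $s$ at $0$ (using linearity of $i$) and then in $t$ at $0$ (using $\frac{d}{dt}\big|_0 \Ad_{\exp(tx)}(y) = [x, y]$) produces $[L_x, i_y] = i_{[x,y]}$.

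For (iii), if $\phi$ intertwines the module structures, then $\phi \circ \rho_V(c(t)) = \rho_W(c(t)) \circ \phi$ for any smooth family of chains; differentiating at $t = 0$ with $c(t) = \exp(tx)$ and $c(t) = \sigma[tx]$ gives the required intertwinings of $L_x$ and $i_x$. For (iv), the Alexander-Whitney formula applied to the diagonal gives $\Delta(\exp(tx)) = \exp(tx) \otimes \exp(tx)$ and $\Delta(\sigma[tx]) = e \otimes \sigma[tx] + \sigma[tx] \otimes \exp(tx)$; inserting these into the tensor-product action through the Koszul-signed natural map $\mathsf{nat}$ and differentiating at $t = 0$ using $\rho(e) = \id$ and $\rho(\sigma[0]) = 0$ reproduces precisely the formulas \eqref{tensor} for the $\TT\gfrak$-action on a tensor product. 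The main delicate point throughout is the sign bookkeeping in the graded commutators — notably $[i_x, i_y] = 0$, where $i_x$ has odd degree, and the Koszul signs hidden in $\mathsf{nat}$ — but once these are tracked, all the verifications are direct.
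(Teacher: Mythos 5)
Your proof is correct and follows essentially the same route as the paper: the same exploitation of $\sigma[tx]^2=0$ (with polarization) for $[i_x,i_y]=0$, the same conjugation identity $\exp(tx)\,\sigma[sy]\,\exp(-tx)=\sigma\big[s\,\Ad_{\exp(tx)}y\big]$ for the mixed bracket, the same differentiation of the chain-map identity for $[\delta,i_x]=L_x$, and your unpacking of the Alexander--Whitney coproduct in the monoidal step is in fact more explicit than the paper's. The one point to repair is the general principle you invoke for linearity of $x\mapsto i_x$ --- that a jointly smooth map vanishing on the slice $\{t=0\}$ has $t$-derivative at $0$ linear in the remaining variable --- which is false as stated (consider $F(t,x)=t\,\|x\|^2$); what actually saves you, and what the paper uses, is that $\rho(\sigma[tx])$ depends on $(t,x)$ only through the product $tx$, so the derivative at $t=0$ is the differential at the origin of the smooth map $y\mapsto\rho(\sigma[y])$ evaluated at $x$, which is linear.
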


\begin{proof}
We write $\widehat{\rho}$ instead of $\Dscr(\rho)$. Clearly, $\widehat{\rho}(L_x)$ depends linearly on $x$. Let us show that so does $\widehat{\rho}(i_x)$. Consider the map
\[ \sigma\colon \gfrak \times I  \rightarrow G, \quad (x,s)\mapsto \exp(sx).\]
By hypothesis, we know that the map
\[ \rho_{\sigma} \colon \gfrak  \rightarrow \End(V),\quad x\mapsto \rho(\sigma[x])\]
is smooth. Its derivative $\rho_{\sigma}(0)\colon \gfrak \rightarrow \End(V)$ is a linear map and satisfies
\[ d\rho_{\sigma}(0)(x)=\widehat{\rho}(i_x).\]
We need to prove that the operators satisfy the Cartan relations:
\begin{eqnarray}
 {}[\widehat{\rho}(L_x),\widehat{\rho}(L_y)]&=&\widehat{\rho}(L_{[x,y]}) \\
 {}[\delta, \widehat{\rho}(i_x)] &=& \widehat{\rho}(L_x)\\
{} [\widehat{\rho}(i_x),\widehat{\rho}(i_y)]&=&0 \\
{}[ \widehat{\rho}(L_x),\widehat{\rho}(i_y)]&=&\widehat{\rho}(i_{[x,y]})
\end{eqnarray}
Here $\delta$ denotes the differential in $V$. For the first identity, we observe that $V$ is in particular a representation of $G$ and the formula for $\widehat{\rho}(L_x)$ gives the induced representation on the Lie algebra. Let us prove the second identity. Since $\rho\colon \uC_{\sbullet}(G) \rightarrow \End(V)$ is a morphism of complexes, we know that
\[ [\delta,\rho(\sigma[tx])]=\rho (\exp(tx))- \id .\]
Differentiating with respect to $t$ and evaluating at $t=0$ one obtains
\[ [\delta, \widehat{\rho}(i_x)]=\widehat{\rho}(L_x).\]
For the next identity it suffices to show that $[\widehat{\rho}(i_x), \widehat{\rho}(i_x)]=0$. Let us consider the quantity
\[ \frac{d}{dt}\Big\rvert_{t=0} \left(  \frac{d}{dt}\rho\left(\sigma[tx]^2\right) \right).\]
By Lemma \ref{square} we know that $\sigma[tx]^2=0$ so that the quantity above is zero.
On the other hand we compute
\begin{eqnarray*}
\frac{d}{dt}\Big\rvert_{t=0} \left(  \frac{d}{dt} \left( \rho(\sigma[tx]) \rho(\sigma[tx] \right) \right)&=&\frac{d}{dt}\Big\rvert_{t=0}\Bigg[\left(\frac{d}{dt}(\rho(\sigma[tx])\right) \rho(\sigma[tx])+ \rho(\sigma[tx]) \left(\frac{d}{dt}\rho(\sigma[tx])\right)\Bigg]\\
&=& \frac{d}{dt}\Big\rvert_{t=0}\left(\frac{d}{dt}\rho(\sigma[tx])\right) \rho(\sigma[0])+ \frac{d}{dt}\Big \rvert_{t=0}\rho(\sigma[tx]) \frac{d}{dt}\Big\rvert_{t=0} \rho(\sigma[tx])\\
&&+ \frac{d}{dt}\Big \rvert_{t=0}\rho(\sigma[tx]) \frac{d}{dt}\Big\rvert_{t=0} \rho(\sigma[tx])+ \rho(\sigma[0]) \frac{d}{dt}\Big\rvert_{t=0}\left(\frac{d}{dt}\rho(\sigma[tx])\right)\\\\
&=&\widehat{\rho}(i_x) \circ \widehat{\rho}(i_x) + \widehat{\rho}(i_x) \circ \widehat{\rho}(i_x)\\
&=&[\widehat{\rho}(i_x),\widehat{\rho}(i_x)]
\end{eqnarray*}
We conclude that the third identity holds.  It remains to prove the last relation. For this we consider the quantity
\[ A=\frac{d}{ds}\Big\rvert_{s=0}\frac{d}{dt}\Big\rvert_{t=0}\rho \big( \exp(tx) \sigma[sy] \exp(-tx) \big).\]
On the one hand, 
\begin{eqnarray*}
A&=&\frac{d}{ds}\Big\rvert_{s=0}\frac{d}{dt}\Big\rvert_{t=0}\rho(\exp(tx)) \rho( \sigma[sy]) \rho( \exp(-tx) )\\
&=&\frac{d}{ds}\Big\rvert_{s=0}\frac{d}{dt}\Big\rvert_{t=0}\exp(t \widehat{\rho}(L_x)) \rho( \sigma[sy])  \exp(-t \widehat{\rho}(L_x)) )\\
&=&\frac{d}{ds}\Big\rvert_{s=0}[\widehat{\rho}(L_x), \rho( \sigma[sy]) ]\\
&=&[\widehat{\rho}(L_x),\widehat{\rho}(i_y)].
\end{eqnarray*}
On the other hand
\begin{eqnarray*}
A&=&\frac{d}{ds}\Big\rvert_{s=0}\frac{d}{dt}\Big\rvert_{t=0}\rho \big( \exp(tx) \sigma[sy] \exp(-tx) \big)\\
&=&\frac{d}{ds}\Big\rvert_{s=0}\frac{d}{dt}\Big\rvert_{t=0}\rho \big( \sigma[s \Ad_{\exp(tx)}y] \big)\\
&=&\frac{d}{ds}\Big\rvert_{s=0} \rho(\sigma[s[x,y]])\\
&=&\widehat{\rho}(i_{[x,y]}).
\end{eqnarray*}
In order to prove that $\Dscr$ defines a functor it suffices to show that a map $\phi\colon V \rightarrow W$, which is $\uC_{\sbullet}(G)$-equivariant is also $\TT\gfrak$-equivariant. The fact that $\widehat{\rho}(L_x)$ commutes with $\varphi$ is standard. It remains to compute:
\[ \phi( \widehat{\rho}(i_x)(v))=\phi\left( \frac{d}{dt}\Big\rvert_{t=0} \rho\big(\sigma[tx](v)\right)= \frac{d}{dt}\Big \rvert_{t=0} \phi \big(\rho(\sigma[tx])(v)\big)=\frac{d}{dt}\Big \rvert_{t=0} \rho(\sigma[tx])(\phi (v))=\widehat{\rho}(i_x)(\phi (v)).\]
It only remains to prove that the functor $\Dscr$ is monoidal. Consider $V, V' \in \Mod$ with corresponding morphisms $\rho \colon \uC_{\sbullet}(G) \rightarrow \End(V)$ and $\rho' \colon \uC_{\sbullet}(G) \rightarrow \End(V)$. Let $\rho \otimes \rho'\colon \uC_{\sbullet}(G) \rightarrow \End(V\otimes V')$ be the representation on $V \otimes V'$ induced by the coproduct on $\uC_{\sbullet}(G)$. Let us compute:
\begin{eqnarray*}
\widehat{\rho \otimes \rho'}(L_x)(v \otimes v')&=&\frac{d}{dt}\Big\rvert_{t=0} (\rho \otimes \rho')(\exp(tx))(v\otimes v')\\
&=& \frac{d}{dt}\Big\rvert_{t=0} \rho(\exp(tx))(v)\otimes \rho'(\exp(tx)) (v')\\
&=& \frac{d}{dt}\Big\rvert_{t=0} \Big(\rho(\exp(tx))(v)\Big)\otimes v'+ v \otimes \frac{d}{dt}\Big\rvert_{t=0} \Big(\rho'(\exp(tx))(v')\Big)\\
&=&\widehat{\rho}(L_x)(v) \otimes v' + v \otimes \widehat{\rho}'(L_x) (v').
\end{eqnarray*}
Also,
\begin{eqnarray*}
\widehat{\rho \otimes \rho'}(i_x)(v \otimes v')&=&\frac{d}{dt}\Big\rvert_{t=0} (\rho \otimes \rho')(\sigma[tx])(v\otimes v')\\
&=& \frac{d}{dt}\Big\rvert_{t=0} \rho(\sigma[tx])(v)\otimes \rho'(\sigma[tx]) (v')\\
&=& \frac{d}{dt}\Big\rvert_{t=0} \Big(\rho(\sigma[tx])(v)\Big)\otimes v'+ (-1)^{\vert v \vert} v \otimes \frac{d}{dt}\Big\rvert_{t=0} \Big(\rho'(\sigma[tx])(v')\Big)\\
&=& \widehat{\rho}(i_x)(v) \otimes v' + (-1)^{\vert v \vert} v \otimes \widehat{\rho}'(i_x) (v').
\end{eqnarray*}
We conclude that $\widehat{\rho \otimes \rho'}$ is the tensor product of the representations $\widehat{\rho}$ and $\widehat{\rho}'$ as described by equation \eqref{tensor}.
\end{proof}

\begin{remark}
The functor $\Dscr$ is natural with respect to Lie group homomorphisms. That is, if $f\colon G \rightarrow H$ is a Lie group homomorphism then the following diagram commutes
\[ \xymatrix{
\Mod(\uC_{\sbullet}(H)) \ar[r]^-{(f_*)^*}\ar[d]_-{\Dscr}& \Mod(\uC_{\sbullet}(G))\ar[d]^-{\Dscr} \\
\Rep(\TT\mathfrak{h})\ar[r]^-{f^*} & \Rep(\TT\gfrak). 
}\]
\end{remark}


\subsection{The integration functor}
 Let us fix a simply connected Lie group $G$ with Lie algebra $\gfrak$, and a representation $\rho\colon \TT\gfrak \rightarrow \End(V)$. As before, given $x \in \gfrak$ we denote by $L_x \in \TT\gfrak^0$ and $i_x \in \TT\gfrak^{-1}$ the corresponding elements in $\TT\gfrak$. We also denote by $\rho\colon G \rightarrow \GL(V)$ the corresponding representation of the group.
 
  \begin{lemma}\label{com}
For each $ g \in G$, $\rho(g)$ commutes with the differential in $V$. That is, the map $\rho(g)$ is a morphism of cochain complexes.
 \end{lemma}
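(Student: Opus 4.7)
The plan is to first reduce the claim to an infinitesimal statement about the degree-zero operators, and then integrate using connectedness of $G$.

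First I would observe that since $\rho\colon \TT\gfrak\to\End(V)$ is a morphism of DG Lie algebras and the differential on $\End(V)$ is the graded commutator with $\delta$, the Cartan relation $d(i_x)=L_x$ in $\TT\gfrak$ translates to
\[ [\delta,\rho(i_x)]=\rho(L_x)\qquad\text{for every } x\in\gfrak.\]
From this I would compute
\[ [\delta,\rho(L_x)]=[\delta,[\delta,\rho(i_x)]],\]
and apply the graded Jacobi identity with $a=b=\delta$ (degree $+1$) and $c=\rho(i_x)$ (degree $-1$). Since $[\delta,\delta]=2\delta^{2}=0$, the right-hand side simplifies to $-[\delta,[\delta,\rho(i_x)]]$, forcing $[\delta,\rho(L_x)]=0$. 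Thus every $\rho(L_x)$ is a chain map.

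Next I would pass from the infinitesimal level to the group level. Because $G$ is simply connected, the representation $\rho\colon\gfrak\to\End(V)$ coming from the $L_x$'s integrates to a representation $\rho\colon G\to\GL(V)$ satisfying $\rho(\exp(tx))=\exp\bigl(t\rho(L_x)\bigr)$. Since $\rho(L_x)$ commutes with $\delta$, so does every operator in the one-parameter subgroup $\exp\bigl(t\rho(L_x)\bigr)$, i.e.\ $[\delta,\rho(\exp(tx))]=0$.

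Finally I would argue that the subset
\[ K=\{\,g\in G: \rho(g)\delta=\delta\rho(g)\,\}\]
is a subgroup of $G$ (because $\rho$ is a group homomorphism and $\delta$ is linear) and is closed (being the zero locus of the continuous map $g\mapsto[\delta,\rho(g)]$). By the previous step $K$ contains $\exp(\gfrak)$; since $G$ is connected it is generated by any neighborhood of the identity, hence by $\exp(\gfrak)$, so $K=G$. This yields the lemma. The only delicate point is the Jacobi computation in the first paragraph, but the sign bookkeeping there is routine once one remembers that $\delta$ has odd degree so that $[\delta,\delta]=2\delta^{2}$.
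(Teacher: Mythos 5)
Your proof is correct and follows essentially the same strategy as the paper: reduce to the infinitesimal fact that $[\delta,\rho(L_x)]=0$ (the paper obtains this directly from $\rho(d L_x)=\rho(0)=0$, since $\rho$ is a map of DG Lie algebras and $L_x$ is closed in $\TT\gfrak$, rather than via your Jacobi computation, but the two are equivalent) and then globalize using connectedness of $G$. The only difference is in the globalization step --- the paper differentiates $[\delta,\rho(\gamma(t))]$ along a path from $e$ to $g$, whereas you use one-parameter subgroups together with the fact that a connected group is generated by the image of $\exp$ --- and both routes are valid.
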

 
  \begin{proof}
 The statement clearly holds for $g=e$. Choose a path $\gamma\colon I \rightarrow G$ such that $\gamma(0)=e$ and 
 $\gamma(1)=g$. One computes,
$$
 \frac{d}{dt} [\delta,\rho(\gamma(t))] =\left[\partial,\frac{d}{dt} \rho(\gamma(t))\right]=[\delta,\rho(L_{dL_{\gamma(t)^{-1}}(\gamma'(t))})]= \rho(d(L_{dL_{\gamma(t)^{-1}}(\gamma'(t))}))=0. \qedhere
$$
 \end{proof}
 
 \begin{definition}
We will say that a differential form $\omega \in \Omega^{\sbullet}(G, \End(V))$ is $G$-equivariant if it satisfies
\[ L_g^* \omega=\rho(g)\circ \omega,\]
for all $g\in G$.
\end{definition}

 \begin{lemma}\label{commutes}
 An equivariant form is determined by its value at the identity as follows
 \begin{equation}\label{equivariant}
  \omega_g(y_1,\dots, y_k)= \rho(g) \circ \omega_e\left(x_1,\dots,x_n \right),
  \end{equation}
 where $y_i =(dL_g)_e (x_i)$. Moreover, the space of equivariant forms is closed under the De Rham differential $d$, and also with respect to the differential $\delta$.
 \end{lemma}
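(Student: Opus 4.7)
The plan is to verify both assertions by unpacking the definition of $G$-equivariance $L_g^*\omega = \rho(g)\circ \omega$ and by exploiting Lemma \ref{com}, which guarantees that each $\rho(g)$ is a morphism of cochain complexes (hence commutes with $\delta_V$).

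For the formula, I would evaluate the equivariance condition at the point $e \in G$ and apply it to $k$ tangent vectors $x_1, \dots, x_k \in T_e G = \gfrak$. By the definition of pullback,
\[ (L_g^*\omega)_e(x_1,\dots,x_k) = \omega_{L_g(e)}\bigl((dL_g)_e x_1,\dots,(dL_g)_e x_k\bigr) = \omega_g(y_1,\dots,y_k), \]
so equivariance yields $\omega_g(y_1,\dots,y_k) = \rho(g)\circ \omega_e(x_1,\dots,x_k)$. Since $(dL_g)_e\colon \gfrak \to T_g G$ is an isomorphism, every tangent frame at $g$ arises this way, so $\omega_e$ determines $\omega$ everywhere.

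For closure under $d$, I would use naturality of the De Rham differential under pullback: $L_g^*(d\omega) = d(L_g^*\omega) = d(\rho(g)\circ \omega) = \rho(g)\circ d\omega$, the last equality because $\rho(g) \in \End(V)$ is a constant coefficient, independent of the base point on $G$. For closure under $\delta$, I would observe that $\delta$ is induced by the differential $\delta_V$ on the coefficient factor via the graded commutator $[\delta_V, -]$, and in particular acts only on the $\End(V)$ tensor factor, so it commutes with $L_g^*$ (which acts only on the form factor). Thus $L_g^*(\delta\omega) = \delta(\rho(g)\circ \omega)$, and since $\rho(g)$ has degree zero and, by Lemma \ref{com}, commutes with $\delta_V$, we get $\delta(\rho(g)\circ \omega) = \rho(g)\circ \delta\omega$ with no sign. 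The main (modest) obstacle is really just keeping track of what $\delta$ acts on and invoking Lemma \ref{com} at the right moment; everything else is formal manipulation.
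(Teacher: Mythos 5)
Your proposal is correct and follows essentially the same route as the paper: unwind the equivariance condition $L_g^*\omega=\rho(g)\circ\omega$ at the identity (using that $(dL_g)_e$ is an isomorphism) to get the determination formula, use naturality of $d$ under pullback plus constancy of $\rho(g)$ for closure under $d$, and invoke Lemma \ref{com} for closure under $\delta$. The only cosmetic difference is that the paper phrases the first part as ``vanishes at $e$ implies vanishes everywhere,'' which is equivalent to your direct derivation.
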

 
 \begin{proof}
 Suppose that $\omega$ vanishes at the identity. Taking $y_i= (dL_g)_e (x_i)$ one computes: 
 \[0=\rho(g)\circ \eta_e(x_1,\dots,x_k)= (L_g^*\omega)_e(x_1, \dots, x_k)= \omega_g(y_1, \dots , y_n).\]
Since the $y_i$ are arbitrary, on concludes that $\omega$ vanishes. Conversely, it is clear that once $\omega$ is specified at the identity, formula \eqref{equivariant} provides an equivariant extension. In order to prove that equivariant forms are preserved by the De Rham differential we compute
 \[L_g^*(d\omega)= d( L_g^* \omega)=d( L_{\rho(g)}\circ \omega )=L_{\rho(g)} \circ d\omega .\]
 For the last statement, we compute
 \[  L_g^*(\delta \omega)= \delta( L_g^* \eta)=\delta ( \rho(g) \circ \omega )= \rho(g) \circ \delta \omega ,\]
 where we have used Lemma \ref{com} in the last step.
 \end{proof}
 
 \begin{lemma}\label{adjoint}
Let $\rho\colon\TT\gfrak \to \End(V)$ be a representation. Then:
\[ \rho\left(i_{\Ad_g x}\right)= \rho(g) \circ \rho(i_x) \circ \rho(g^{-1}).\]
\end{lemma}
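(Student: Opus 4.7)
The plan is to fix $x \in \gfrak$ and show that the smooth function
$E_x \colon G \to \End(V)$, given by $E_x(g) = \rho(g)\rho(i_x)\rho(g^{-1}) - \rho(i_{\Ad_g x})$,
vanishes identically. At $g = e$ both terms reduce to $\rho(i_x)$, so $E_x(e) = 0$. Because $E_x(g)$ depends linearly on $x \in \gfrak$, it is convenient to let $x$ vary and organize the information into a single smooth map $F \colon G \to \Hom(\gfrak, \End(V))$ defined by $F(g)(x) = E_x(g)$; the goal is then to prove $F \equiv 0$.

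The key step is to compute the derivative of $F$ along left-invariant vector fields. For $y \in \gfrak$, using $\frac{d}{dt}\big|_{t=0}\rho(\exp(ty)) = \rho(L_y)$, the Cartan relation $[L_y, i_x] = i_{[y,x]}$, and the linearity of $x \mapsto \rho(i_x)$, a direct computation gives
\[\frac{d}{dt}\Big|_{t=0} \rho(g\exp(ty))\,\rho(i_x)\,\rho(\exp(-ty)g^{-1}) = \rho(g)\,\rho(i_{[y,x]})\,\rho(g^{-1}),\]
while, using $\Ad_{g\exp(ty)} = \Ad_g \circ \Ad_{\exp(ty)}$ and the fact that $\frac{d}{dt}\big|_{t=0}\Ad_{\exp(ty)} = \ad_y$,
\[\frac{d}{dt}\Big|_{t=0} \rho(i_{\Ad_{g\exp(ty)} x}) = \rho(i_{\Ad_g [y,x]}).\]
Subtracting and using the definition of $F$ yields $(L_y F)(g)(x) = E_{[y,x]}(g) = (F(g) \circ \ad_y)(x)$; thus along the one-parameter subgroup $t \mapsto g\exp(ty)$ the function $F$ satisfies the linear ordinary differential equation
\[\frac{d}{dt} F(g\exp(ty)) = F(g\exp(ty)) \circ \ad_y.\]

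The unique solution of this ODE with initial value $F(g)$ at $t=0$ is $F(g\exp(ty)) = F(g) \circ \Ad_{\exp(ty)}$. In particular, the locus $\{g \in G : F(g) = 0\}$ is closed under right multiplication by any exponential $\exp(y)$ with $y \in \gfrak$. Since it contains the identity and $G$ is connected (so every element is a finite product of exponentials), the locus equals all of $G$, which is what we want. The conceptual heart of the argument is the second paragraph: making the two derivatives, coming respectively from the conjugation action on $\rho(i_x)$ and from the $\Ad$-action inside $i_-$, assemble into a single linear ODE for $F$. Once the linearity of $i_-$ converts the $\Ad$-derivative into the bracket $[y,x]$ and the Cartan relation converts the conjugation-derivative into the same bracket, the two terms combine neatly and the remainder reduces to a standard connectedness argument.
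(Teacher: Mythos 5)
Your argument is correct and is essentially the paper's: both proofs reduce the identity to the Cartan relation $[\rho(L_y),\rho(i_x)]=\rho(i_{[y,x]})$ at the infinitesimal level and then use connectedness of $G$ to propagate the identity from $e$ to all of $G$. The only difference is that the paper simply asserts that $\gfrak$-equivariance of $x\mapsto\rho(i_x)$ implies $G$-equivariance for connected $G$, whereas you carry out that standard integration step explicitly via the ODE $\frac{d}{dt}F(g\exp(ty))=F(g\exp(ty))\circ\ad_y$.
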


\begin{proof}
We need to show that the map $\rho\colon \TT\gfrak^{-1} \rightarrow \End(V)$ is $G$-equivariant. Since $G$ is connected, it suffices to show that it is $\gfrak$-equivariant.  For this we compute:
\[ \rho(L_y i_x)=\rho(i_{[y,x]})=\rho([L_y,i_x])=[\rho(L_y), \rho(i_x)]=L_y\rho(i_x). \qedhere\]
\end{proof}

\begin{definition}
 For each $k \geq 0$, we denote by $\Phi_V^{(k)} \in \Omega^k(G, \End^{-k}(V))$ the unique $G$-equivariant form such that
 \[ \Phi_V^{(k)}(e)(x_1, \dots, x_k)= \rho(i_{x_1}) \circ \cdots \circ \rho(i_{x_k}).\]
Since $\rho$ is a representation of $\TT\gfrak$, this expression is skew-symmetric.
\end{definition}

 \begin{lemma}\label{derivative}
 Let $\omega$ be an $G$-equivariant form and $y, x_1, \dots , x_k$ be left invariant vector fields on $G$. Then
 \[ (L_y\omega)_e(x_1, \dots x_k)=  \rho(L_y) \circ \omega_e (x_1, \dots x_k).\]
 \end{lemma}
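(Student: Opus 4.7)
The strategy is to use the $G$-equivariance of $\omega$ to reduce the Lie derivative at the identity to an ordinary derivative of the group representation $\rho$ along a one-parameter subgroup.

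First, I would apply Lemma~\ref{commutes} to express $\omega$ entirely in terms of $\omega_e$. For left-invariant vector fields $x_1,\ldots,x_k$, one has
\[ \omega_g\bigl((x_1)_g,\ldots,(x_k)_g\bigr) = \rho(g)\circ\omega_e\bigl((x_1)_e,\ldots,(x_k)_e\bigr),\]
so the smooth function $F\colon G\to\End^{-k}(V)$, defined by $F(g):=\omega_g((x_1)_g,\dots,(x_k)_g)$, takes the simple trivialized form $F(g)=\rho(g)\circ\omega_e(x_1,\dots,x_k)$.

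Second, I would differentiate along the flow of $y$. Since $y$ is left-invariant, its flow is right translation $\phi_t(g)=g\exp(ty)$, so in particular $\phi_t(e)=\exp(ty)$. Differentiating $F\circ\phi_t$ at $t=0$ and $g=e$ yields
\[ \frac{d}{dt}\Big|_{t=0} \rho(\exp(ty))\circ\omega_e(x_1,\dots,x_k) = \rho(L_y)\circ\omega_e(x_1,\dots,x_k),\]
by the defining identity $\rho(L_y)=\frac{d}{dt}\big|_{t=0}\rho(\exp(ty))$ for the infinitesimal group representation; Lemma~\ref{com} ensures that this is a well-defined endomorphism of the cochain complex $V$.

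The step I expect to require the most care is the identification $(L_y\omega)_e(x_1,\dots,x_k) = y_e(F)$. Under the left-invariant trivialization of $TG$, the equivariant $k$-form $\omega$ becomes the function $G\to\Lambda^k\gfrak^*\otimes\End^{-k}(V)$ given by $g\mapsto\rho(g)\circ\omega_e$, and the Lie derivative along a left-invariant vector field $y$ corresponds to the directional derivative of this function along $y_e$. Writing $(L_y\omega)_e=\frac{d}{dt}\big|_{t=0}(\phi_t^*\omega)_e$ and using the factorization $(dR_{\exp(ty)})_e=(dL_{\exp(ty)})_e\circ\Ad_{\exp(-ty)}$, the pullback splits at the identity as the $\rho(\exp(ty))$-action composed with an adjoint twist on the arguments; once the trivialization conventions fixed in the paper are unwound, this reduces to the desired identity.
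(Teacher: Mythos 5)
Your first two paragraphs are exactly the paper's argument: left-invariance of the $x_i$ gives $\omega_{\exp(ty)}\bigl((x_1)_{\exp(ty)},\dots,(x_k)_{\exp(ty)}\bigr)=(L_{\exp(ty)}^*\omega)_e(x_1,\dots,x_k)$, equivariance turns this into $\rho(\exp(ty))\circ\omega_e(x_1,\dots,x_k)$, and differentiating at $t=0$ produces $\rho(L_y)\circ\omega_e(x_1,\dots,x_k)$. That is the entire proof in the paper.

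The problem is your final paragraph, and the claim that the adjoint twist ``reduces to the desired identity'' after unwinding conventions: it does not. If you take $(L_y\omega)_e$ to be the Cartan Lie derivative $\frac{d}{dt}\big\rvert_{t=0}(R_{\exp(ty)}^*\omega)_e$ (the flow of a left-invariant field is right translation), then using $(dR_{\exp(ty)})_e=(dL_{\exp(ty)})_e\circ\Ad_{\exp(-ty)}$ and equivariance you get
\[ (R_{\exp(ty)}^*\omega)_e(x_1,\dots,x_k)=\rho(\exp(ty))\circ\omega_e\bigl(\Ad_{\exp(-ty)}x_1,\dots,\Ad_{\exp(-ty)}x_k\bigr),\]
and differentiating at $t=0$ yields $\rho(L_y)\circ\omega_e(x_1,\dots,x_k)-\sum_i\omega_e(x_1,\dots,[y,x_i],\dots,x_k)$. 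The bracket correction terms do not vanish in general, so under the Cartan-Lie-derivative reading the identity you are proving would be false. The lemma is nevertheless correct because in this paper $(L_y\omega)_e(x_1,\dots,x_k)$ \emph{means} the directional derivative $\frac{d}{dt}\big\rvert_{t=0}\,\omega_{\exp(ty)}\bigl((x_1)_{\exp(ty)},\dots,(x_k)_{\exp(ty)}\bigr)$, i.e.\ precisely $y_e(F)$ for your function $F$; this is how the term $(L_{x_i}\Phi_V^{(k)})_e$ enters the coordinate-free formula for $d$ in the proof of Lemma \ref{total}. With that convention your ``step requiring the most care'' is a tautology and there is no twist to unwind; you should state the convention rather than assert a cancellation that does not happen.
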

 
  \begin{proof}
 We compute
 \begin{eqnarray*}
 (L_y\omega)_e(x_1, \dots x_k) &=&\frac{d}{dt}\Big \rvert_{t=0} \omega_{\exp(ty)}(x_1, \dots x_k)\\
 &=&\frac{d}{dt}\Big \rvert_{t=0} (L_{\exp(ty)}^*\omega)_e(x_1, \dots x_k)\\
 &=&\frac{d}{dt}\Big \rvert_{t=0}\rho(\exp(ty))\circ \omega_e(x_1, \dots x_k) \\
 &=& \rho(L_y) \circ \omega_e(x_1, \dots x_k) .  \qedhere
 \end{eqnarray*}
 \end{proof}
 
  \begin{lemma}\label{tele}
 For each $k \geq 1$ and left invariant vector fields $y, x_1,\dots, x_k$ on $G$, the following equation holds:
 \[ \sum_{j=1}^k (-1)^{j+1}\Phi_V^{(k)}(e)([y, x_j], x_1, \dots, \widehat{x}_j, \dots , x_k)=\rho(L_y) \circ \rho(i_{x_1})\circ\cdots \circ\rho(i_{x_k})-\rho(i_{x_1}) \circ \cdots \circ \rho(i_{x_k}) \circ \rho(L_y). \]
 \end{lemma}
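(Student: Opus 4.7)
The plan is to derive both sides by purely algebraic manipulations inside $\End(V)$, using only the Cartan relations in $\TT\gfrak$ and the explicit formula $\Phi_V^{(k)}(e)(z_1,\dots,z_k)=\rho(i_{z_1})\circ\cdots\circ\rho(i_{z_k})$.

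First I would rewrite the left-hand side. The third Cartan relation $[i_a,i_b]=0$ means that the operators $\rho(i_{z})$ of degree $-1$ pairwise anticommute, so $\Phi_V^{(k)}(e)$ is totally skew-symmetric in its $k$ arguments. Moving $[y,x_j]$ from the first slot to the $j$-th slot requires $j-1$ transpositions, contributing a sign $(-1)^{j-1}$ which exactly cancels the $(-1)^{j+1}$ in front. Thus each term in the sum becomes
\[
\rho(i_{x_1})\circ\cdots\circ\rho(i_{x_{j-1}})\circ\rho(i_{[y,x_j]})\circ\rho(i_{x_{j+1}})\circ\cdots\circ\rho(i_{x_k}),
\]
so the left-hand side equals $\sum_{j=1}^k \rho(i_{x_1})\cdots\rho(i_{[y,x_j]})\cdots\rho(i_{x_k})$, where in the $j$-th summand the factor $\rho(i_{x_j})$ is replaced by $\rho(i_{[y,x_j]})$.

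Next I would expand the right-hand side by a telescoping commutation. The Cartan relation $[L_y,i_{x_j}]=i_{[y,x_j]}$ gives
\[
\rho(L_y)\circ\rho(i_{x_j})=\rho(i_{x_j})\circ\rho(L_y)+\rho(i_{[y,x_j]}).
\]
Applying this repeatedly to move $\rho(L_y)$ past $\rho(i_{x_1}),\rho(i_{x_2}),\dots,\rho(i_{x_k})$ in turn yields
\[
\rho(L_y)\circ\rho(i_{x_1})\circ\cdots\circ\rho(i_{x_k})=\rho(i_{x_1})\circ\cdots\circ\rho(i_{x_k})\circ\rho(L_y)+\sum_{j=1}^k\rho(i_{x_1})\circ\cdots\circ\rho(i_{[y,x_j]})\circ\cdots\circ\rho(i_{x_k}),
\]
so the right-hand side of the lemma is exactly the sum obtained above.

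Comparing the two expressions finishes the proof. There is no real obstacle here beyond sign bookkeeping: the matching of the factor $(-1)^{j+1}$ in the statement with the sign arising from $k$-fold skew-symmetry is the only subtle point, and it is forced by the fact that the $\rho(i_x)$ are odd operators whose anticommutativity encodes the third Cartan relation.
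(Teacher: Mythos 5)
Your proof is correct, and it rests on exactly the same two ingredients as the paper's: the total skew-symmetry of $\Phi_V^{(k)}(e)$ coming from $[i_a,i_b]=0$, and the relation $\rho(L_y)\circ\rho(i_{x_j})=\rho(i_{x_j})\circ\rho(L_y)+\rho(i_{[y,x_j]})$. The paper organizes the same telescoping as an induction on $k$ (peeling off the last factor $\rho(i_{x_k})$), whereas you unroll it directly; your sign bookkeeping, with $(-1)^{j+1}$ cancelling the $(-1)^{j-1}$ from the $j-1$ anticommutations, is accurate.
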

 
 \begin{proof}
 We argue by induction on $k$. For $k=1$ we have:
 \[ \Phi_V^{(1)}(e)([y,x])=\rho(i_{[y,x]})=\rho(L_{y}) \circ \rho(i_x) -\rho(i_x) \circ \rho(L_{y}).\]
 Let us now assume that it holds for $k-1$ and compute:
 \begin{eqnarray*}
&& \!\!\!\!\!\!\!\!\!\!\!\!\!\!\!\!\!\! \sum_{j=1}^k (-1)^{j+1}\Phi_V^{(k)}(e)([y, x_j], x_1, \dots, \widehat{x}_j, \dots , x_k)\\  
 &=&\sum_{j=1}^{k-1}(-1)^{j+1} \Phi_V^{(k)}(e)([y, x_j], x_1, \dots, \widehat{x}_j, \dots , x_k) 
 + (-1)^{k+1} \Phi_V^{(k)}(e)([y, x_k], x_1,\dots , x_{k-1}) \\
 &=&\sum_{j=1}^{k-1}(-1)^{j+1} \Phi_V^{(k)}(e)([z, x_j], x_1, \dots, \widehat{x}_j, \dots , x_{k-1})\circ \rho(i_{x_k}) 
 + \Phi_V^{(k)}(e)( x_1,\dots , x_{k-1},[y,x_k]) \\
  &=& \rho(L_y) \circ \rho(i_{x_1}) \circ \cdots \circ \rho(i_{x_k})- \rho(i_{x_1}) \circ \cdots \circ \rho(i_{x_{k-1}}) \circ \rho(L_y) \circ \rho(i_{x_k})  \\
  &&+\rho(i_{x_1}) \circ \cdots \circ \rho(i_{x_{k-1}}) \circ \rho(L_y)\circ \rho(i_{x_k}) - \rho(i_{x_1}) \circ \cdots \circ \rho(i_{x_k}) \circ \rho(L_{y})\\
  &=&  \rho(L_y)\circ \rho(i_{x_1}) \circ \cdots \circ \rho(i_{x_k})- \rho(i_{x_1}) \circ \cdots \circ \rho(i_{x_k}) \circ \rho(L_{y}). \qedhere
 \end{eqnarray*}
 \end{proof}
 
 \begin{lemma}\label{total}
 The differential forms $\Phi_V^{(k)}$ satisfy
 \[ d \Phi_V^{(k)}=(-1)^{k} \delta \Phi_V^{(k+1)}.\]
 \end{lemma}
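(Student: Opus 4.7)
Both $d\Phi_V^{(k)}$ and $\delta \Phi_V^{(k+1)}$ are $G$-equivariant $\End^{-k}(V)$-valued differential forms on $G$: the forms $\Phi_V^{(k)}$ and $\Phi_V^{(k+1)}$ are equivariant by construction, and equivariance is preserved by both operators by Lemma \ref{commutes}. By the uniqueness clause of Lemma \ref{commutes}, it therefore suffices to verify the identity at $e \in G$, evaluated on an arbitrary $(k+1)$-tuple $y_0, \ldots, y_k \in \gfrak$ regarded as left-invariant vector fields.

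For the left-hand side, apply the Koszul formula for $d$ on left-invariant vector fields. The Lie bracket terms use $[L_{y_i}, L_{y_j}] = L_{[y_i, y_j]}$ combined with the formula for $\Phi_V^{(k)}(e)$, while the directional terms are evaluated via Lemma \ref{derivative} (which replaces the derivatives $y_i(\Phi_V^{(k)}(\cdots))|_e$ by left multiplication with $\rho(L_{y_i})$). The result is
\[
(d\Phi_V^{(k)})_e(y_0, \ldots, y_k) = \sum_{i=0}^{k} (-1)^i \rho(L_{y_i})\, \rho(i_{y_0}) \cdots \widehat{\rho(i_{y_i})} \cdots \rho(i_{y_k}) + \sum_{i<j} (-1)^{i+j} \rho(i_{[y_i, y_j]})\, \rho(i_{y_0}) \cdots \widehat{\rho(i_{y_i})} \cdots \widehat{\rho(i_{y_j})} \cdots \rho(i_{y_k}).
\]
For the right-hand side, apply $\delta$ to $\Phi_V^{(k+1)}(e)(y_0, \ldots, y_k) = \rho(i_{y_0}) \cdots \rho(i_{y_k})$; graded Leibniz and the $\TT\gfrak$-relation $[\delta_V, \rho(i_x)] = \rho(L_x)$ (which is $\rho$ respecting $d\, i_x = L_x$) give a telescoping sum of the form $\sum_{l=0}^k (-1)^l \rho(i_{y_0}) \cdots \rho(i_{y_{l-1}}) \rho(L_{y_l}) \rho(i_{y_{l+1}}) \cdots \rho(i_{y_k})$, with the overall sign $(-1)^k$ dictated by the Koszul convention for $\delta$ acting on a $(k+1)$-form.

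To match the two expressions, commute each central $\rho(L_{y_l})$ in the telescoping sum all the way to the leftmost position, using $\rho(L_x)\rho(i_y) - \rho(i_y)\rho(L_x) = \rho(i_{[x,y]})$ at each step. The leading $\rho(L_{y_l})$-contributions assemble into the first sum of the LHS, while each ``bracket'' correction $\rho(i_{[y_l, y_m]})$ produced along the way is moved to the leftmost slot via the anticommutation $\rho(i_x) \rho(i_y) = -\rho(i_y) \rho(i_x)$ (from $[i_x, i_y] = 0$); antisymmetrizing $l$ and $m$ using $i_{[y_m, y_l]} = -i_{[y_l, y_m]}$ assembles these corrections into the bracket sum of the LHS with signs $(-1)^{i+j}$ for $i<j$. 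The main technical difficulty is the sign bookkeeping in this last step: keeping track of $(-1)^l$ from graded Leibniz, $(-1)^m$ from anticommuting a bracket term across $m$ interior products, and the reindexing sign from skew-symmetry of the bracket. An alternative, cleaner route is induction on $k$, invoking Lemma \ref{tele} (just proved) as the inductive step to handle the ``new'' argument.
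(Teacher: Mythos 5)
Your proposal follows the paper's proof essentially verbatim: reduce to the identity element by equivariance, compute the left-hand side via the Koszul formula together with Lemma \ref{derivative}, compute the right-hand side by graded Leibniz using $[\delta,\rho(i_x)]=\rho(L_x)$, and match the two by commuting the operators $\rho(L_{y_l})$ past strings of $\rho(i_{y_m})$'s while collecting bracket corrections. The paper packages exactly that last commutation step as Lemma \ref{tele} (applied directly, not as an induction on $k$), which is the ``cleaner route'' you already identify, so the two arguments coincide.
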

 
  \begin{proof}
 Since both sides of the equation are equivariant, it suffices to show that they coincide at the identity.
 Let us fix left invariant vector fields $x_1, \dots x_{k+1}$ and compute the right hand side
 \begin{eqnarray*}
  (\delta\Phi_V^{(k+1)})_e(x_1,\dots x_{k+1})=\delta \left( \rho(i_{x_1}) \circ \cdots \circ \rho(i_{x_{k+1}}) \right)   =\sum_{i=1}^{k+1} (-1)^{i+1}  \rho(i_{x_1}) \circ \cdots \circ \rho(L_{x_i}) \circ \cdots  \circ \rho(i_{x_{k+1}}).
 \end{eqnarray*}
 For the left hand side we use Lemma \ref{derivative} and Lemma \ref{tele} to compute
  \begin{eqnarray*}
 && \!\!\!\! \!\!\!\! \!\!\!\! \!\!\!\! (d \Phi_V^{(k)})_e(x_1, \dots ,x_{k+1}) \\
 &=&  \sum_{i<j} (-1)^{i+j+k} \Phi_V^{(k)}(e)( [x_i, x_j], \dots , \widehat{x}_i, \dots , \widehat{x}_j, \dots , x_{k+1}\\
 && + \sum_i (-1)^{i+k+1}(L_{x_i} \Phi_V^{(k)})_e(x_1, \dots , \widehat{x}_i, \dots, x_{k+1})\\
&=&  \sum_{j>1}\sum_{i<j} (-1)^{i+j+k} \Phi_V^{(j-1)}(e)( [x_i, x_j], \dots , \widehat{x}_i, \dots , x_{j-1}) \circ \rho(i_{x_{j+1}}) \circ \cdots \circ \rho(i_{x_{k+1}})\\
&& + \sum_i (-1)^{i+k+1}\rho(L_{x_i}) \circ \rho(i_{x_1}) \circ \cdots \circ \widehat{\rho(i_{x_i})} \circ \cdots \circ \rho(i_{x_{k+1}})\\
&=&  \sum_{j>1}(-1)^{j+k}\sum_{i<j} (-1)^{i+1} \Phi_V^{(j-1)}(e)( [x_i, x_j], \dots , \widehat{x}_i, \dots , x_{j-1}) \circ \rho(i_{x_{j+1}}) \circ \cdots \circ \rho(i_{x_{k+1}})\\
&&  + \sum_i (-1)^{i+k+1}\rho(L_{x_i}) \circ \rho(i_{x_1}) \circ \cdots \circ \widehat{\rho(i_{x_i})} \circ \cdots \circ \rho(i_{x_{k+1}})\\
&=&  \sum_{j>1}(-1)^{j+k}\Big[\rho(L_{x_j}) \circ \rho(i_{x_1})\circ \cdots \widehat{\rho(i_{x_j})} \circ \cdots \circ  \rho(i_{x_{k+1}}) -\rho(i_{x_1}) \circ \cdots \circ \rho(L_{x_j}) \circ \cdots \circ \rho(i_{x_{k+1}}) \Big]\\
&& + \sum_j (-1)^{j+k+1}\rho(L_{x_j}) \circ \rho(i_{x_1}) \circ \cdots \circ \widehat{\rho(i_{x_j})} \circ \cdots \circ \rho(i_{x_{k+1}})\\
&=& \sum_{j}(-1)^{j+k+1} \rho(i_{x_1}) \circ \cdots \circ \rho(L_{x_j}) \circ \cdots \circ \rho(i_{x_{k+1}}) \\
&=& (-1)^k (\delta \Phi_V^{(k+1)})_e(x_1,\dots x_{k+1}). \qedhere
 \end{eqnarray*}
 \end{proof}
 
 \begin{lemma}\label{product}
Let $\mu\colon G \times G \rightarrow G$ be the multiplication map and $\pi_1,\pi_2 \colon G \times G \to G$ the projections.
The form $\Phi_V^{(k)}$ satisfies the equation
\[\mu^*\Phi_V^{(k)}= \sum_{i+j=k}(-1)^{ij}\pi^*_1\Phi_V^{(i)} \wedge \pi^*_2\Phi_V^{(j)}.\]
\end{lemma}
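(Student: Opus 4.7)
My plan is to reduce the identity to a point-wise check at $(e,e)\in G\times G$ by exploiting equivariance, and then to match the two sides combinatorially. Both sides lie in $\Omega^k(G\times G,\End^{-k}(V))$. I will show that both sides transform in the same way under the two commuting actions $L_{(g_0,e)}\colon(g,h)\mapsto(g_0 g,h)$ and $R_{(e,h_0)}\colon(g,h)\mapsto(g,hh_0)$ on $G\times G$; since $(g,h)=L_{(g,e)}R_{(e,h)}(e,e)$, this reduces the identity to the single point $(e,e)$.

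For the left-hand side, combining $\mu\circ L_{(g_0,e)}=L_{g_0}\circ\mu$ with the $G$-equivariance of $\Phi_V^{(k)}$ from Lemma \ref{commutes} gives $L_{(g_0,e)}^*\mu^*\Phi_V^{(k)}=\rho(g_0)\circ\mu^*\Phi_V^{(k)}$. For the second action I first establish the right-equivariance $R_{h_0}^*\Phi_V^{(k)}=\Phi_V^{(k)}\circ\rho(h_0)$ as a short lemma: using the standard identity $\theta^L_{gh_0}(dR_{h_0}v)=\mathrm{Ad}_{h_0^{-1}}\theta^L_g(v)$ for the Maurer–Cartan form together with Lemma \ref{adjoint}, the adjacent $\rho(h_0)\rho(h_0^{-1})$ pairs telescope to leave a single $\rho(h_0)$ on the right. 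Then $\mu\circ R_{(e,h_0)}=R_{h_0}\circ\mu$ yields $R_{(e,h_0)}^*\mu^*\Phi_V^{(k)}=(\mu^*\Phi_V^{(k)})\circ\rho(h_0)$. Analogous computations using $\pi_1\circ L_{(g_0,e)}=L_{g_0}\circ\pi_1$, $\pi_2\circ L_{(g_0,e)}=\pi_2$, $\pi_1\circ R_{(e,h_0)}=\pi_1$, and $\pi_2\circ R_{(e,h_0)}=R_{h_0}\circ\pi_2$ show that each summand on the right transforms identically.

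Evaluating at $(e,e)$, tangent vectors are pairs $(x_\ell,y_\ell)\in\gfrak\oplus\gfrak$, and $d\mu_{(e,e)}(x,y)=x+y$. The left-hand side is therefore $\rho(i_{x_1+y_1})\cdots\rho(i_{x_k+y_k})$, which by linearity of $i_\bullet$ expands into $2^k$ terms in the natural order $\ell=1,\dots,k$, one for each subset $S\subseteq\{1,\dots,k\}$. For the right-hand side I adopt the Koszul convention for wedge products of graded-algebra-valued forms: for $\alpha\in\Omega^i(\cdot,\End^{-i}(V))$ and $\beta\in\Omega^j(\cdot,\End^{-j}(V))$ the wedge product carries the intrinsic sign $(-1)^{(-i)\cdot j}=(-1)^{ij}$, which cancels the explicit $(-1)^{ij}$ in the statement. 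Consequently the right-hand side at $(e,e)$ reduces to $\sum_{i+j=k}\sum_{\sigma\in\mathfrak{S}_{i,j}}\mathrm{sgn}(\sigma)\,\rho(i_{x_{\sigma(1)}})\cdots\rho(i_{x_{\sigma(i)}})\,\rho(i_{y_{\sigma(i+1)}})\cdots\rho(i_{y_{\sigma(i+j)}})$.

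Finally, to match the two expansions, rewrite each LHS term---whose $\rho(i_{x_\ell})$'s and $\rho(i_{y_\ell})$'s appear interleaved according to $S$---by moving all $\rho(i_{x_\ell})$ factors to the left of all $\rho(i_{y_\ell})$ factors. The Cartan relation $[i_x,i_y]=0$ gives $\rho(i_x)\rho(i_y)=-\rho(i_y)\rho(i_x)$, so the reordering produces exactly the sign $\mathrm{sgn}(\sigma)$ of the shuffle $\sigma$ determined by $S$. This yields a bijection between the terms in the two expansions and completes the proof. The main obstacle is the sign bookkeeping: one must fix the Koszul wedge convention consistently and verify that the odd-operator anticommutativity produces precisely the shuffle sign rather than its opposite.
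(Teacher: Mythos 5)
Your argument is correct, but it takes a genuinely different route from the paper's. The paper evaluates both sides directly at an arbitrary point $(g,h)$, but only on tuples of \emph{pure} tangent vectors ordered so that $y_1,\dots,y_i\in T_gG$ come before $y_{i+1},\dots,y_k\in T_hG$ (which suffices by multilinearity and skew-symmetry of both sides). On such a tuple all but the $(i,k-i)$ summand of the right-hand side vanish, the surviving wedge contributes exactly the Koszul sign $(-1)^{i(k-i)}$ that cancels the explicit prefactor, and the left-hand side is computed from $d\mu(y,0)=dR_h(y)$, $d\mu(0,y)=dL_g(y)$ together with left-equivariance and Lemma \ref{adjoint}; no shuffle combinatorics and no appeal to $[i_x,i_y]=0$ are needed. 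You instead reduce to the single point $(e,e)$ by establishing a two-sided equivariance --- the right-equivariance $R_{h_0}^*\Phi_V^{(k)}=\Phi_V^{(k)}\circ\rho(h_0)$, whose derivation via the Maurer--Cartan identity and the telescoping from Lemma \ref{adjoint} is correct and is essentially the same computation the paper performs in-line at $(g,h)$ --- and then test against arbitrary mixed tangent vectors $(x_\ell,y_\ell)$, which forces the full $2^k$-term expansion and the shuffle-sign matching via $\rho(i_x)\rho(i_y)=-\rho(i_y)\rho(i_x)$. Your sign bookkeeping checks out: the inversions of an $(i,j)$-shuffle are precisely the pairs $(s,t)$ with $s\in S$, $t\notin S$, $s>t$, so the reordering sign is $\mathrm{sgn}(\sigma)$ as claimed, and your Koszul convention for the wedge of $\End(V)$-valued forms agrees with the one the paper implicitly uses. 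What the paper's approach buys is brevity; what yours buys is an explicit verification on all tangent vectors and a transparent role for the relation $[i_x,i_y]=0$, at the cost of an auxiliary equivariance lemma and the combinatorics.
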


\begin{proof}
Let us fix $g, h \in G$ and vectors $y_1, \dots, y_i \in T_g G$, $y_{i+1}, \dots , y_{k} \in T_h G$. Denote by $x_l$ 
the vectors in $\gfrak$ characterized by $ (dL_{g})_e(x_l)=y_l$ for $l\leq i$ and $(dL_h)_e(x_l)=y_l$ for $l >i$.
We use Lemma \ref{adjoint} to compute the left hand side:
\begin{align*}
&(\mu^*\Phi_V^{(k)})_{(g,h)}(y_1, \dots ,y_{k}) \\
&\quad=\Phi_V^{(k)}(gh)((dR_h)_g (y_1), \dots , (dR_h)_g (y_i), (dL_g)_h (y_{i+1}), \dots, (dL_g)_h (y_{k}) )\\
&\quad=\rho(gh) \circ \Phi_V^{(k)}(e)(\Ad_{h^{-1}}x_1, \dots, \Ad_{h^{-1}}x_i,x_{i+1}, \dots,x_{k}) \\
&\quad= \rho(g) \circ \rho(h) \circ \rho(i_{\Ad_{h^{-1}}x_1}) \circ \cdots \circ \rho(i_{\Ad_{h^{-1}}x_i}) \circ \rho(i_{x_{i+1}}) \circ \cdots \circ \rho(i_{x_k})\\
&\quad= \rho(g)\circ \rho(h) \circ \rho(h^{-1})\circ \rho(i_{x_1}) \circ \rho(h) \circ \cdots \circ \rho(h^{-1}) \circ \rho(i_{x_i}) \circ \rho(h) \circ \rho(i_{x_{i+1}})  \circ \cdots \circ \rho(i_{x_k})\\
&\quad= \Phi_V^{(i)}(g)( y_1, \dots,y_i) \circ \Phi_V^{(k-i)}(h)(y_{i+1}, \dots ,  y_k)
\end{align*}
On the other hand, the right hand side is:
\begin{align*}
\sum_{j+l=k}(-1)^{jl}\big(\pi^*_1\Phi_V^{(j)} \wedge \pi^*_2\Phi_V^{(l)}\big)_{(g,h)}(y_1,\dots, y_{k})&=(-1)^{i(k-i)}\big(\pi^*_1\Phi_V^{(j)} \wedge \pi^*_2\Phi_V^{(l)}\big)_{(g,h)}(y_1,\dots, y_{k})\\
&= \Phi_V^{(i)}(g)( y_1, \dots,y_i) \circ \Phi_V^{(k-i)}(h)(y_{i+1}, \dots ,  y_k). &\qedhere
\end{align*}
\end{proof}

\begin{definition}
Let $G$ be a Lie group and $(V,\delta)$ a finite dimensional cochain complex. A representation form is a differential form $\Phi \in \Omega^{\sbullet}(G, \End(V))$ such that
\begin{itemize}
\item  $\Phi=\sum_{k \geq 0} \Phi^{(k)}$, where $\Phi^{(k)} \in \Omega^k(G, \End^{-k}(V))$.
\item $\Phi^{(0)}(e)=\id_V$.
\item $d\Phi^{(k)} =(-1)^k \delta \Phi^{(k+1)}$.
\item 
$\mu^*\Phi^{(k)}=\sum_{i+j=k}(-1)^{ij}\pi^*_1\Phi^{(i)}\wedge \pi_2^* \Phi^{(j)}$.
\end{itemize}
\end{definition}

\begin{lemma}
Let $\Phi$ be a representation form. Then $\Phi^{(0)}$ is a representation of $G$ and $\Phi$ is $G$-equivariant with respect to this action.
\end{lemma}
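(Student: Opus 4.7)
The proof is essentially a direct unpacking of the multiplicativity axiom for $\Phi$, applied separately in degree $0$ and in degree $k$.

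First I would establish that $\Phi^{(0)}$ is a representation. Evaluating the product axiom at $k=0$ gives
\[ \mu^{*}\Phi^{(0)} = \pi_1^{*}\Phi^{(0)} \wedge \pi_2^{*}\Phi^{(0)}, \]
and since all three forms are of degree zero, evaluating at a point $(g,h)\in G\times G$ yields $\Phi^{(0)}(gh) = \Phi^{(0)}(g)\circ \Phi^{(0)}(h)$. Combined with the normalization $\Phi^{(0)}(e)=\id_V$, this shows $\Phi^{(0)}$ is a monoid homomorphism; setting $h=g^{-1}$ gives $\Phi^{(0)}(g)\circ \Phi^{(0)}(g^{-1})=\id_V$, so each $\Phi^{(0)}(g)$ is invertible and $\Phi^{(0)}\colon G \to \GL(V)$ is a representation. (Smoothness is built into the definition of a representation form.)

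Next I would prove equivariance by restricting the degree-$k$ product axiom to a left-translation submanifold. Fix $g\in G$ and consider the embedding $\iota_g\colon G \to G\times G$, $h\mapsto (g,h)$; by construction $\pi_1\circ \iota_g$ is the constant map with value $g$, while $\pi_2\circ \iota_g=\id_G$ and $\mu\circ \iota_g=L_g$. Pulling back the identity
\[ \mu^{*}\Phi^{(k)} = \sum_{i+j=k}(-1)^{ij}\,\pi_1^{*}\Phi^{(i)}\wedge \pi_2^{*}\Phi^{(j)} \]
along $\iota_g$, every term with $i\geq 1$ vanishes because $\iota_g^{*}\pi_1^{*}\Phi^{(i)}$ is the pullback of a positive-degree form under a constant map. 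Only the $(i,j)=(0,k)$ term survives, giving
\[ L_g^{*}\Phi^{(k)} = \Phi^{(0)}(g)\circ \Phi^{(k)}, \]
where the wedge product collapses to composition because the left factor is a zero-form with constant value $\Phi^{(0)}(g)\in \End^0(V)$. Summing over $k$ yields $L_g^{*}\Phi = \Phi^{(0)}(g)\circ \Phi$, which is the $G$-equivariance statement.

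There is really no deep obstacle here; the only thing to be careful about is the interpretation of the wedge in $\Omega^{\sbullet}(G,\End(V))$, where the multiplication of coefficients is composition of endomorphisms (and the sign $(-1)^{0\cdot k}=1$ so no stray sign appears in the surviving term). Once this convention is fixed, both assertions follow from specialising the multiplicativity axiom to the cases $k=0$ and to the slice $\{g\}\times G$.
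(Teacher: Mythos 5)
Your proof is correct and follows essentially the same route as the paper: the representation property comes from the $k=0$ case of the multiplicativity axiom together with $\Phi^{(0)}(e)=\id_V$, and equivariance comes from restricting the degree-$k$ multiplicativity to the slice $\{g\}\times G$ (the paper does this by evaluating $\mu^*\Phi^{(k)}$ on tangent vectors lying in $T_hG$, which is the same computation as your pullback along $\iota_g$). Your explicit remark that only the $(0,k)$ term survives, and the observation that $h=g^{-1}$ gives invertibility of $\Phi^{(0)}(g)$, are small additions the paper leaves implicit.
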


\begin{proof}
Let us show that $\Phi^{(0)} \colon G \rightarrow \End(V)$ is a representation. By hypothesis, $\Phi^{(0)}(e)=\id_V$. Also
\[ \Phi^{(0)}(gh)=\mu^*\Phi^{(0)}(g,h)=\Phi^{(0)}(g) \circ \Phi^{(0)}(h).\]
Let us now show that the differential form $\Phi^{(k)}$ is $G$-equivariant with respect to this action. Take $y_1, \dots, y_k \in T_hG$ and $x_i \in \gfrak$ with $y_i=(dL_h)_e(x_i)$. Then
\[(\mu^*\Phi^{(k)})_{(g,h)}(y_1, \dots , y_k)=\Phi^{(k)}(gh)((dL_g)_h (y_1), \dots, (dL_g)_h (y_k))=(L_g^* \Phi^{(k)})_h(y_1, \dots, y_k).\]
On the other hand
\begin{eqnarray*}
(\mu^*\Phi^{(k)})_{(g,h)}(y_1, \dots , y_k)&=&\sum_{i+j=k}(-1)^{ij} \big(\pi^*_1\Phi^{(i)} \wedge \pi^*_2\Phi^{(j)} \big)_{(g,h)}(y_1,\dots, y_k)\\
&=& \Phi^{(0)}(g) \circ \Phi^{(k)}(h) (y_1, \dots, y_k). \qedhere
\end{eqnarray*}
\end{proof}

\begin{lemma}\label{muk}
Let $\Phi \in \Omega^{\sbullet}(G, \End(V)$ be a representation form and denote by $\mu_p\colon G^p \rightarrow G$ the multiplication map. Then:
\[ \mu_p^*\Phi^{(k)}=\sum_{j_1+\dots +j_p=k}(-1)^{\sum_{l=1}^p j_l(j_{l-1}+ \dots +j_1)}\pi^*_1\Phi^{(j_1)}\wedge \dots \wedge \pi^*_p\Phi^{(j_p)}.\]
\end{lemma}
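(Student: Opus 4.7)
The plan is to induct on $p$, using Lemma \ref{product} as both the base case ($p=2$) and the key associativity step. The case $p=1$ is trivial since $\mu_1=\id_G$, and the case $p=2$ is exactly the representation-form axiom $\mu^*\Phi^{(k)}=\sum_{i+j=k}(-1)^{ij}\pi^*_1\Phi^{(i)}\wedge \pi_2^* \Phi^{(j)}$.

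For the inductive step, I would write $\mu_p = \mu \circ (\mu_{p-1}\times \id_G)\colon G^{p-1}\times G \to G\times G \to G$. Applying Lemma \ref{product} first gives
\[ \mu_p^*\Phi^{(k)} = (\mu_{p-1}\times \id)^*\sum_{i+j=k}(-1)^{ij}\pi_1^*\Phi^{(i)}\wedge \pi_2^*\Phi^{(j)} = \sum_{i+j=k}(-1)^{ij}\,(\mu_{p-1}^*\Phi^{(i)})\wedge \pi_p^*\Phi^{(j)},\]
where in the last expression I have used that $\pi_1\circ (\mu_{p-1}\times \id)=\mu_{p-1}\circ \pi_{1,\dots,p-1}$ and $\pi_2\circ (\mu_{p-1}\times \id)=\pi_p$, and I am slightly abusing notation to let $\pi_1,\dots,\pi_{p-1}$ now denote the projections from $G^p$ onto its first $p-1$ factors. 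Applying the inductive hypothesis to $\mu_{p-1}^*\Phi^{(i)}$ and setting $j_p:=j$ produces the expected multi-indexed sum with $j_1+\dots+j_p=k$.

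The only substantive point is the sign bookkeeping. The inductive hypothesis contributes the sign $(-1)^{\sum_{l=1}^{p-1} j_l(j_{l-1}+\dots+j_1)}$, while Lemma \ref{product} contributes $(-1)^{ij}=(-1)^{j_p(j_{p-1}+\dots+j_1)}$, which is precisely the $l=p$ term missing from the previous sum; together they assemble into $(-1)^{\sum_{l=1}^{p} j_l(j_{l-1}+\dots+j_1)}$, matching the statement. I expect this step to be the only place needing care, but it goes through because the recursion of $\mu_p$ as $\mu\circ(\mu_{p-1}\times \id)$ matches exactly the nesting of pairs in the sign exponent: each new factor on the right contributes a sign involving its degree times the sum of all preceding degrees, and nothing more. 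Since no additional Koszul signs arise from the pullback (the $\pi_p^*\Phi^{(j_p)}$ factor is already on the right), the two-term associativity suffices and no regrouping by shuffle is needed.
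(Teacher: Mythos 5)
Your proof is correct and follows essentially the same route as the paper: induction on $p$ with the $p=2$ representation-form axiom (Lemma \ref{product}) supplying both the base case and the inductive step. The only difference is that the paper peels off the \emph{first} factor via $\mu_p=\mu\circ(\id\times\mu_{p-1})$, which requires a small relabelling to reassemble the sign, whereas your decomposition $\mu_p=\mu\circ(\mu_{p-1}\times\id)$ makes the new sign $(-1)^{ij}$ literally the $l=p$ term of the exponent, so your bookkeeping is if anything slightly cleaner.
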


\begin{proof}
We argue by induction on $p$. The case $p=1$ is trivial and $p=2$ holds by definition. Let us assume that the statement holds for $p-1$. We use the notation
\[ \widetilde{\mu}_{p-1}\colon G^p \rightarrow G,\quad  \widetilde{\mu}_{p-1}(g_1, \dots , g_p)=g_2 \dots g_p\] and compute
\begin{eqnarray*}
\mu_p^*\Phi^{(k)}&=& (\mu \circ (\id \times \mu_{p-1}))^*\Phi^{(k)}=(\id \times \mu_{p-1})^* \circ \mu^*\Phi^{(k)}\\
&=&(\id \times \mu_{p-1})^*\left( \sum_{i+j=k}(-1)^{ij}\pi^*_1\Phi^{(i)}\wedge  \pi^*_2\Phi^{(j)}\right)\\
&=& \sum_{i+j=k}(-1)^{ij}\pi^*_1\Phi^{(i)}\wedge  \widetilde{\mu}_{p-1}^*\Phi^{(j)}\\
&=&\sum_{i+j=k}(-1)^{ij}\pi^*_1\Phi^{(i)} \wedge \left(\sum_{j_1+\dots +j_{p-1}=j}(-1)^{\sum_{l=1}^{p-1} j_l(j_{l-1}+ \dots +j_1)}\pi^*_2\Phi^{(j_1)}\wedge \dots \wedge \pi^*_{p}\Phi^{(j_{p-1})}\right)\\
&=&\sum_{j_1+\dots +j_p=k}(-1)^{\sum_{l=1}^p j_l(j_{l-1}+ \dots +j_1)}\pi^*_1\Phi^{(j_1)}\wedge \dots \wedge \pi^*_p\Phi^{(j_p)} \qedhere
\end{eqnarray*}
\end{proof}

\begin{lemma}\label{injective}
Let $\Phi, \Psi \in \Omega^{\sbullet}(G, \End(V))$ be representation forms which coincide in degrees $0$ and $1$. Then $\Phi= \Psi$.
\end{lemma}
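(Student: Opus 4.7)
The plan is to proceed by induction on $k$, showing that $\Phi^{(k)}=\Psi^{(k)}$ for all $k\ge 0$. The base cases $k=0$ and $k=1$ are exactly the hypothesis. For the inductive step, the idea is to exploit the multiplicativity axiom (Lemma \ref{product}) to express $\Phi^{(k)}(e)$ in terms of $\Phi^{(1)}(e)$ and $\Phi^{(k-1)}(e)$, thereby reducing everything to data already controlled.

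Concretely, I would evaluate the identity $\mu^{*}\Phi^{(k)}=\sum_{i+j=k}(-1)^{ij}\pi_1^{*}\Phi^{(i)}\wedge \pi_2^{*}\Phi^{(j)}$ at the point $(e,e)\in G\times G$ on the tangent vectors $w_1=(x_1,0),\,w_2=(0,x_2),\dots, w_k=(0,x_k)$ inside $T_{(e,e)}(G\times G)\cong \gfrak\oplus \gfrak$. Since $(d\mu)_{(e,e)}(v,w)=v+w$, the left-hand side becomes $\Phi^{(k)}(e)(x_1,\dots,x_k)$. For the right-hand side, the key combinatorial observation is that the wedge $\pi_1^{*}\Phi^{(i)}\wedge \pi_2^{*}\Phi^{(j)}$ evaluated on these vectors, expanded via shuffles, vanishes unless the factor $\pi_1^{*}\Phi^{(i)}$ sees only tangent vectors with nonzero first component; there is exactly one such vector, so $i=1$, and correspondingly $j=k-1$. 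Keeping only this surviving term yields the recursion
\[\Phi^{(k)}(e)(x_1,\dots,x_k)=(-1)^{k-1}\,\Phi^{(1)}(e)(x_1)\circ \Phi^{(k-1)}(e)(x_2,\dots,x_k),\]
and the identical recursion holds for $\Psi$. By the inductive hypothesis together with the assumption $\Phi^{(1)}=\Psi^{(1)}$, the two right-hand sides agree, giving $\Phi^{(k)}(e)=\Psi^{(k)}(e)$.

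Finally, to promote equality at the identity to equality on all of $G$, I would invoke the preceding lemma, which shows that each $\Phi^{(k)}$ is $G$-equivariant with respect to the representation $\Phi^{(0)}$, namely $L_g^{*}\Phi^{(k)}=\Phi^{(0)}(g)\circ \Phi^{(k)}$. Since $\Phi^{(0)}=\Psi^{(0)}$ by hypothesis and $\Phi^{(k)}(e)=\Psi^{(k)}(e)$ by the recursion, equivariance forces $\Phi^{(k)}=\Psi^{(k)}$ pointwise on $G$. The only delicate step in the argument is justifying that a single term of the multiplicativity sum survives on the chosen tangent vectors; this is a routine but careful shuffle-product computation for $\End(V)$-valued forms, and I do not anticipate any genuine obstacle.
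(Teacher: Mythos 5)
Your argument is correct and is essentially the paper's own: the paper evaluates the $p$-fold multiplicativity formula (Lemma \ref{muk}) at $(e,\dots,e)$ with one tangent vector per factor, so that only the term $\pi_1^*\Phi^{(1)}\wedge\cdots\wedge\pi_k^*\Phi^{(1)}$ survives, which is exactly your binary recursion unrolled, and then concludes by $G$-equivariance just as you do. The only point to watch in your write-up is that the $i=0$ term of the sum also needs to be discarded (it vanishes because $\pi_2^*\Phi^{(k)}$ then receives the vector $d\pi_2 w_1=0$), not only the terms with $i\ge 2$.
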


\begin{proof}
For each $p\geq 1$ denote by  $\mu_p\colon G^p \rightarrow G $ the multiplication map. Consider the point $(e , \dots, e ) \in G^p$ and vectors $x_i \in \gfrak$ seen as a tangent vector in the $i$th copy of $G$. We use Lemma \ref{muk} to compute:
\begin{eqnarray*}
\Phi(e)(x_1, \dots, x_k)&=&(\mu_p^*\Phi)_{(e,\dots,e)}(x_1, \dots, x_k)\\
&=&(-1)^{1+2+\dots +k-1}(\pi^*_1 \Phi^{(1)} \wedge \dots \wedge \pi^*_k \Phi^{(1)})_{(e,\dots,e)} (x_1, \dots , x_k).
\end{eqnarray*}
This expression depends only on $\Phi^{(1)}$. By symmetry, the corresponding expression for $\Psi$ depends only on $\Psi^{(1)}$. We conclude that $\Phi^{(k)}(e)=\Psi^{(k)}(e)$. Since both forms are $G$-equivariant, they are equal.
\end{proof}

\begin{proposition}\label{propform}
Let $G$ be a simply connected Lie group and $\rho\colon \TT\gfrak \rightarrow \End(V)$ a representation.
There is a unique representation form $\Phi_{\rho}$ such that $(d\Phi_{\rho}^{(0)})_e(x)=\rho(L_x)$ and $\Phi_{\rho}^{(1)}(e)(x)=\rho(i_x)$. Moreover, this construction is a bijective correspondence between representations of $\TT\gfrak$ and representation forms.
\end{proposition}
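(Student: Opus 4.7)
The plan is to construct $\Phi_\rho$ as the family of $G$-equivariant extensions of iterated compositions of the operators $\rho(i_\bullet)$, then derive uniqueness from Lemma \ref{injective}. Using simple connectedness of $G$, I would first integrate the Lie algebra representation $x \mapsto \rho(L_x)\colon \gfrak \to \End^0(V)$ to a group representation $\rho\colon G \to \GL(V)$; Lemma \ref{com} guarantees each $\rho(g)$ is a chain map, which legitimizes $G$-equivariance in our setting. I would then define $\Phi_\rho^{(k)} \in \Omega^k(G,\End^{-k}(V))$ to be the unique $G$-equivariant form whose value at $e$ is $(x_1,\dots,x_k) \mapsto \rho(i_{x_1})\circ \cdots \circ \rho(i_{x_k})$; this expression is alternating because $[i_x,i_y]=0$ in $\TT\gfrak$. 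The axioms of a representation form then fall out of earlier results: $\Phi_\rho^{(0)}(e)=\id_V$ is an empty product, the relation $d\Phi_\rho^{(k)}=(-1)^k\delta\Phi_\rho^{(k+1)}$ is Lemma \ref{total}, and the multiplicative property under $\mu^*$ is Lemma \ref{product}. The normalizations $(d\Phi_\rho^{(0)})_e(x)=\rho(L_x)$ and $\Phi_\rho^{(1)}(e)(x)=\rho(i_x)$ are immediate, and uniqueness follows because Lemma \ref{injective} reduces the question to $\Phi^{(0)}$ and $\Phi^{(1)}$: the former is a group representation (hence determined by its derivative at $e$ by simple connectedness), and the latter is determined by its value at $e$ together with $G$-equivariance.

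For the inverse correspondence, given a representation form $\Phi$ I would set $\rho(L_x):=(d\Phi^{(0)})_e(x)$ and $\rho(i_x):=\Phi^{(1)}(e)(x)$ and verify the four Cartan relations. The relation $[L_x,L_y]=L_{[x,y]}$ is immediate because $\Phi^{(0)}$ is a group representation; $[\delta,\rho(i_x)]=\rho(L_x)$ follows by evaluating $d\Phi^{(0)}=\delta\Phi^{(1)}$ at $e$, noting that $\delta$ acts as a graded commutator on $\End^{-1}(V)$-valued $0$-forms. For $[i_x,i_y]=0$, I would evaluate $\mu^*\Phi^{(2)}$ at $(e,e)$ on the pair $((x,0),(0,y))$: only the cross term $-\pi_1^*\Phi^{(1)}\wedge\pi_2^*\Phi^{(1)}$ contributes, giving $\Phi^{(2)}(e)(x,y)=-\rho(i_x)\rho(i_y)$; the built-in antisymmetry of the $2$-form $\Phi^{(2)}$ then forces $\rho(i_x)\rho(i_y)+\rho(i_y)\rho(i_x)=0$. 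For $[L_x,i_y]=i_{[x,y]}$, I would expand both sides of $d\Phi^{(1)}=-\delta\Phi^{(2)}$ at $e$ on $(x,y)$: Lemma \ref{derivative} combined with the Koszul formula for $d$ gives $(d\Phi^{(1)})_e(x,y)=\rho(L_x)\rho(i_y)-\rho(L_y)\rho(i_x)-\rho(i_{[x,y]})$, while $-(\delta\Phi^{(2)})_e(x,y)=[\delta,\rho(i_x)\rho(i_y)]=\rho(L_x)\rho(i_y)-\rho(i_x)\rho(L_y)$; comparing yields the remaining Cartan relation.

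The two constructions are mutually inverse essentially by design: starting from $\Phi$, the representation form built from the reconstructed $\rho$ has the same $\Phi^{(0)}$ and $\Phi^{(1)}$ as the original and so equals $\Phi$ by Lemma \ref{injective}; the opposite direction reads the original $L_x$ and $i_x$ actions off $\Phi_\rho$ tautologically. The main obstacle I anticipate is in the last two Cartan relations, which require careful sign bookkeeping for $\End(V)$-valued wedge products on $G\times G$ at the point $(e,e)$ and a clean form of the Koszul-type structure equation for equivariant forms at the identity. The remaining pieces of the argument are a formal compilation of Lemmas \ref{com}--\ref{injective}.
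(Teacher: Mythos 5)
Your proposal follows the paper's own argument essentially verbatim: the forward direction defines $\Phi_\rho^{(k)}$ as the equivariant extension of $\rho(i_{x_1})\circ\cdots\circ\rho(i_{x_k})$ and invokes Lemmas \ref{total} and \ref{product}, and the inverse direction reads $\rho(L_x)$, $\rho(i_x)$ off $\Phi^{(0)}$, $\Phi^{(1)}$ and verifies the Cartan relations by exactly the same evaluations of $\mu^*\Phi^{(2)}$ at $(e,e)$ and of $d\Phi^{(1)}=-\delta\Phi^{(2)}$ at $e$, with Lemma \ref{injective} closing the loop. The only differences are cosmetic sign-convention choices in the Koszul formula and in ordering the factors of $\Phi^{(2)}(e)(x,y)$, which wash out by the skew-symmetry you note; the argument is correct as proposed.
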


\begin{proof}
As before, one defines $\Phi^{(k)}_{\rho}$ to be the unique equivariant form such that
\[ \Phi^{(k)}_{\rho}(x_1,\dots,x_k)=\rho(i_{x_1}) \circ \dots \circ \rho(i_{x_k}).\]
 By, Lemma \ref{total} and Lemma \ref{product} we know that $\Phi_{\rho}$ is a representation form. It is clear that the correspondence is injective. It remains to show that the map is surjective. Let $\Phi$ be a representation form. We can define a representation $\rho_{\Phi}$ of $\TT\gfrak$ by
\[ \rho_{\Phi}(L_x)= (d\Phi^{(0)})_e(x), \quad  \rho_{\Phi}(i_x)= \Phi^{(1)}(e)(x).\]
Clearly, $\rho_\Phi([L_x,L_y])=[\rho_\Phi(L_x), \rho_\Phi(L_y)]$. To show that $\delta(\rho_\Phi(i_x))= \rho_\Phi(L_x)$ we compute:
\[\delta(\rho_\Phi(i_x)) =(\delta \Phi^{(1)})_e(x)=(d\Phi^{(0)})_e(x)=\rho_\Phi(L_x).\]
 We fix two vectors $x, y\in \gfrak$ seen as tangent vectors to the first and second copy of $G$ in $G\times G$. Then we compute
\begin{align*}\label{2}
\Phi^{(2)}(e)(x,y) &=\big(\mu^*\Phi^{(2)}\big)_{(e,e)}(x,y)=-\big(\pi^*_1\Phi^{(1)}\wedge \pi^*_2\Phi^{(1)}\big)_{(e,e)}(x, y) \\
&=\Phi^{(1)}(e)(y) \circ \Phi^{(1)}(e)(x)= \rho_\Phi(i_y) \circ \rho_\Phi(i_x) .
\end{align*}
Since the expression is skew-symmetric, one concludes that $[\rho_\Phi(i_x), \rho_\Phi(i_y)]=0$.
It only remains to show that $\rho_\Phi(i_{[x,y]})=[\rho_\Phi(i_x), \rho_\Phi(L_y)]$. We consider left invariant vector fields $x$ and $y$ on $G$ and use Lemma \ref{derivative} to compute
\begin{eqnarray*}  (d\Phi^{(1)})_e(x,y)&=&\Phi^{(1)}(e)([x,y])-\big(L_{x}\Phi^{(1)}\big)_e(y)+ \big(L_{y}\Phi^{(1)}\big)_e(x)\\
&=&\Phi^{(1)}(e)([x,y]) - \rho_\Phi(L_x)\circ  \rho_\Phi(i_y)+\rho_\Phi(L_y) \circ \rho_\Phi(i_x). 
\end{eqnarray*}
On the other hand,
\begin{eqnarray*}   (d\Phi^{(1)})_e(x,y)&=& -(\delta\Phi^{(2)})_{e}(x,y)\\
&=&- \delta (\Phi^{(1)}(e)(x) \circ \Phi^{(1)}(e)(y))\\
&=&-\delta( \rho_\Phi(i_x) \circ \rho_\Phi(i_y))\\
&=& - \rho_\Phi(L_x) \circ \rho_\Phi(i_y) +\rho_\Phi(i_x) \circ \rho_\Phi(L_y). 
\end{eqnarray*}
We conclude that $\rho_\Phi(i_{[v,w]})=[\rho_\Phi(i_x), \rho_\Phi(L_y)]$. Lemma \ref{injective} implies that the representation form associated to $\rho_\Phi$ is $\Phi$.
\end{proof}

\begin{definition}
Given a differential form $\omega = T \otimes \eta  \in \Omega^{\sbullet}(G,\End(V))$ and a simplex $\sigma \in \uC_{\sbullet}(G)$ we define the integral of $\omega$ over $\sigma$ by
\[ \int_{\Delta_k} \sigma^*\omega= T \int_{\Delta_k} \sigma^*\eta.\]
\end{definition} 

\begin{proposition}\label{mult}
Let $\Phi \in \Omega^{\sbullet}(G, \End(V))$ be a differential form with values in $\End(V)$.
The map
\[ \rho: \uC_{\sbullet}(G) \rightarrow \End(V), \quad
 \rho(\sigma)= \int_{\Delta_k} \sigma^*\Phi\]
is a morphism of differential graded algebras if and only if $\Phi$ is a representation form.
\end{proposition}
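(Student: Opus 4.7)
The plan is to match, via integration, each of the three defining conditions of a representation form with one of the three DG-algebra conditions on $\rho$: $\Phi^{(0)}(e)=\id_V$ matches unit preservation, $d\Phi^{(k)}=(-1)^k\delta\Phi^{(k+1)}$ matches the chain-map property (via Stokes' theorem), and the coproduct formula $\mu^*\Phi^{(k)}=\sum_{i+j=k}(-1)^{ij}\pi_1^*\Phi^{(i)}\wedge\pi_2^*\Phi^{(j)}$ matches multiplicativity (via the Eilenberg--Zilber shuffle formula). A preliminary observation: since $\sigma^*\Phi^{(j)}$ is a $j$-form on $\Delta_k$, only the $j=k$ piece contributes, so $\rho(\sigma) = \int_{\Delta_k}\sigma^*\Phi^{(k)} \in \End^{-k}(V)$. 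The unit case is immediate: $\rho$ of the constant $0$-simplex at $e$ equals $\Phi^{(0)}(e)$, and so unit preservation is equivalent to $\Phi^{(0)}(e)=\id_V$.

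For the chain map, by Stokes' theorem,
\[\rho(\partial\sigma) = \int_{\partial\Delta_k}\sigma^*\Phi^{(k-1)} = \int_{\Delta_k}\sigma^*\, d\Phi^{(k-1)},\]
while $d_{\End(V)}\rho(\sigma) = \int_{\Delta_k}\sigma^*\delta\Phi^{(k)}$, where $\delta$ acts on $\End(V)$-valued forms as the pointwise differential $[\delta_V,-]$. These agree for every smooth $\sigma$ iff $d\Phi^{(k-1)}=(-1)^{k-1}\delta\Phi^{(k)}$ pointwise, using in the converse direction that a smooth $\End(V)$-valued form on $G$ is detected by its integrals over smooth families of small simplices around each point. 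For multiplicativity, the Eilenberg--Zilber formula gives
\[\rho\bigl(m(\sigma\otimes\tau)\bigr) = \sum_{\chi\in\Ssf_{r,s}}(-1)^{|\chi|}\int_{\Delta_{r+s}}(\chi_*)^*(\sigma\times\tau)^*\mu^*\Phi = \int_{\Delta_r\times\Delta_s}(\sigma\times\tau)^*\mu^*\Phi^{(r+s)},\]
because the shuffle images tile $\Delta_r\times\Delta_s$ with orientation signs $(-1)^{|\chi|}$. On $\Delta_r\times\Delta_s$, only the $(i,j)=(r,s)$ summand in Lemma \ref{product} contributes (by degree on each factor), and Fubini then yields $\rho(\sigma)\circ\rho(\tau)$, once the $(-1)^{rs}$ in Lemma \ref{product} is matched to the Koszul sign from reordering $\End(V)$-valued pieces with form pieces. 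Varying $\sigma$ and $\tau$ over smooth simplices around any $(g,h)\in G\times G$ in the converse direction forces the coproduct formula pointwise.

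The main obstacle will be careful sign bookkeeping: in particular, the interplay between the Koszul sign $(-1)^{ij}$ in the coproduct formula, the Koszul signs accrued from Fubini with $\End(V)$-valued forms, and the cohomological grading $\uC_k(G) \leftrightarrow \End^{-k}(V)$. The geometric content — Stokes' theorem, the shuffle tiling of $\Delta_r\times\Delta_s$, and the density of small simplices for detecting forms pointwise — is classical once the signs align.
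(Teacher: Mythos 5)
Your proposal follows the same route as the paper's proof: degree and unit conditions read off directly, the chain-map property matched to $d\Phi^{(k)}=(-1)^k\delta\Phi^{(k+1)}$ via Stokes' theorem, and multiplicativity matched to the coproduct identity via the shuffle tiling of $\Delta_r\times\Delta_s$ and Fubini, with arbitrariness of the simplices giving the converse directions. The only caveat is the one you flag yourself — the sign $(-1)^{k-1}$ that the paper carries in the Stokes step is absent from your displayed intermediate equation, so the deferred bookkeeping is genuinely needed — but the structure of the argument is identical to the paper's.
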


\begin{proof}
The map $\rho$ is a degree zero map if and only if $\Phi= \sum_{k \geq 0} \Phi^{(k)}$, where
$\Phi^{(k)} \in \Omega^k(G, \End^{-k}(V))$. Also, $\rho(1)=1$ if and only if $\Phi^{(0)}(e)=\id_V$. Let us prove that
$\rho$ preserves the product if and only if
\begin{equation}\label{ij}
 \mu^*\Phi^{(k)}=\sum_{i+j=k} (-1)^{ij} \pi^*_1\Phi^{(i)} \wedge \pi^*_2\Phi^{(j)}. 
 \end{equation}
 Suppose equation \eqref{ij} is satisfied for all $k$. Then, one computes
 \begin{align*}
\rho(\mu \circ (\sigma \times \nu)\circ \overline{\chi})&= \int_{\Delta_{r+s}}(\mu \circ (\sigma \times \nu)\circ \overline{\chi})^*\Phi\\
&=\int_{\Delta_{r+s}}\overline{\chi}^* (\sigma \times \nu)^* \mu^* \Phi\\
&=\sum_{i+j=r+s}(-1)^{ij}\int_{\Delta_{r+s}}\overline{\chi}^*   (\sigma \times \nu)^* \big(\pi_1^*\Phi^{(i)} \wedge \pi_2^*\Phi^{(j)}\big)\\
&=\sum_{i+j=r+s}(-1)^{ij}\int_{\Delta_{r+s}}\overline{\chi}^*\big( \pi_1^*\sigma^*\Phi^{(i)} \wedge \pi_2^*\nu^*\Phi^{(j)}\big)\\
&=(-1)^{rs}\int_{\Delta_{r+s}}\overline{\chi}^*\big( \pi_1^*\sigma^* \Phi^{(r)}  \wedge \pi_2^*\nu^*\Phi^{(s)}\big)\\
&=(-1)^{rs}\int_{\overline{\chi}(\Delta_{r+s})} \pi_1^*\sigma^* \Phi^{(r)}  \wedge \pi_2^*\nu^*\Phi^{(s)}.
\end{align*}
Therefore
\begin{align*}
\rho( \sigma \abxcup  \nu)&= \sum_{\chi \in \mathfrak{S}_{r,s}} (-1)^{\vert\chi\vert+rs} \int_{\overline{\chi}(\Delta_{r+s})} \pi_1^*\sigma^* \Phi^{(r)}  \wedge \pi_2^*\nu^*\Phi^{(s)}\\
&=(-1)^{rs}\int_{\Delta_r \times \Delta_s} \pi_1^*\sigma^* \Phi^{(r)}  \wedge \pi_2^*\nu^*\Phi^{(s)}\\
&=\left(\int_{\Delta_r }\sigma^* \Phi^{(r)}\right)\circ \left(\int_{\Delta_s } \nu^* \Phi^{(s)}\right)\\
&=\rho(\sigma) \circ \rho(\nu).
\end{align*}
Conversely, suppose that $\rho$ preserves the product. Then
\begin{align*}
\rho( \sigma  \abxcup \nu)&= \sum_{\chi \in \mathfrak{S}_{r,s}}  \int_{\overline{\chi}(\Delta_{r+s})} (\sigma \times \nu)^*  \mu^*\Phi^{(r+s)}\\
&=\int_{\Delta_r \times \Delta_s} (\sigma \times \nu)^* \mu^*\Phi^{(r+s)}
\end{align*}
is equal to
\[
\rho(\sigma) \circ \rho(\nu)=\left(\int_{\Delta_r }\sigma^*\Phi^{(r)}\right)\left(\int_{\Delta_s } \nu^*\Phi^{(s)}\right)=(-1)^{rs}\int_{\Delta_r \times \Delta_s}  (\sigma \times \nu)^* \big(\pi^*_1 \Phi^{(r)} \wedge \pi^*_2\Phi^{(s)}\big).
\]
Since $\sigma$ and $\nu$ are arbitrary simplices, we conclude that Equation \eqref{ij} holds.

It remains to show that $\rho$ is a morphism of complexes if and only if 
\begin{equation}\label{d}
d\Phi^{(k)}=(-1)^k \delta \Phi^{(k+1)}.
\end{equation}
Let us suppose that Equation \eqref{d} holds and use Stokes' theorem to compute
\begin{eqnarray*}
\rho(\partial \sigma)&=& \int_{\partial \Delta_k} \sigma^*\Phi^{(k-1)}\\
&=&(-1)^{k-1}\int_{\Delta_k} \sigma^*(d \Phi^{(k-1)})\\
&=&\int_{ \Delta_k} \sigma^*(\delta \Phi^{(k)})\\
&=&\delta \left(\int_{ \Delta_k} \sigma^* \Phi^{(k)}\right)\\
&=& \delta (\rho(\sigma)).
\end{eqnarray*}
Conversely, if $\rho$ is a morphism of complexes then
\[ \int_{ \Delta_k} (-1)^{k-1}\sigma^*(d \Phi^{(k-1)})=\int_{ \Delta_k} \sigma^*(\delta \Phi^{(k)}).\]
Since the simplex $\sigma$ is arbitrary, this implies Equation \eqref{d}.
\end{proof}

\begin{theorem}\label{int}
Let $G$ be a simply connected Lie group. There is a functor:
\[ \Iscr\colon\Rep(\TT\gfrak) \rightarrow \Mod(\uC_{\sbullet}(G)) \]
defined as follows. Fix $V \in \Rep(\TT\gfrak)$ with associated homomorphism $\rho\colon \TT\gfrak \to \End(V)$ and corresponding representation form $\Phi_V$. As a complex of vector spaces, $\Iscr(V)$ is equal to $V$. The module structure is given by the map
\[ \Iscr(\rho)\colon \uC_{\sbullet}(G)\rightarrow \End(V), \quad \sigma \mapsto \int_{\Delta_k} \sigma^*\Phi_V.\]
If $\phi\colon V \to V'$ is a morphism in $\Rep(\TT\gfrak)$, then the same underlying linear map is also a morphism in $\Mod(\uC_{\sbullet}(G))$.
\end{theorem}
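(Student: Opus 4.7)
The plan is to assemble the functor $\Iscr$ from the two main preparatory results, then verify the four defining conditions of $\Mod(\uC_\sbullet(G))$ and the compatibility with morphisms.

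First, given a representation $\rho\colon \TT\gfrak \to \End(V)$, I would invoke Proposition \ref{propform} to produce the associated representation form $\Phi_V \in \Omega^\sbullet(G, \End(V))$. Then Proposition \ref{mult} immediately implies that the map $\sigma \mapsto \int_{\Delta_k} \sigma^*\Phi_V$ is a morphism of differential graded algebras from $\uC_\sbullet(G)$ to $\End(V)$. This is the heart of the construction, so the bulk of the remaining work is checking that this DG algebra map actually lies in the subcategory $\Mod(\uC_\sbullet(G))$.

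Next, I would verify conditions (a)--(d). Condition (a) is immediate from the hypothesis on $V$. For (b), any thin simplex $\sigma\colon \Delta_k \to G$ has differential of rank less than $k$ at every point, so the pullback of the top-degree piece $\sigma^*\Phi_V^{(k)}$ vanishes identically, while only this piece contributes to $\int_{\Delta_k} \sigma^*\Phi_V$; hence $\Iscr(\rho)(\sigma) = 0$. For (c), smoothness of $\Iscr(\rho)(\sigma_M)$ as a function of $p \in M$ reduces to differentiation under the integral sign applied to the smooth form $\sigma_M^*\Phi_V$. Condition (d), that the matrix entries relative to a basis are alternating and subdivision invariant, follows from the standard behavior of integrals of differential forms under orientation-reversing reparametrization of $\Delta_k$ and under decomposition of the domain, and is precisely the content encoded in Appendix \ref{A1}.

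For morphisms, suppose $\phi\colon V \to V'$ is $\TT\gfrak$-equivariant. From $\phi \circ \rho(i_x) = \rho'(i_x) \circ \phi$ one deduces at the identity the equality $\phi \circ \Phi_V^{(k)}(e)(x_1,\dots,x_k) = \Phi_{V'}^{(k)}(e)(x_1,\dots,x_k) \circ \phi$, and $G$-equivariance of both representation forms, together with the fact that $\phi$ intertwines the degree-zero $G$-actions $\Phi_V^{(0)}$ and $\Phi_{V'}^{(0)}$, propagates this identity to every point of $G$. Integrating against any simplex then shows $\phi$ intertwines the $\uC_\sbullet(G)$-actions. Strict functoriality of $\Iscr$ (preservation of identities and composition) is immediate since $\Iscr$ is the identity on underlying linear maps. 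The main obstacle I foresee is condition (d), since unlike the others it does not reduce to routine calculus with forms but relies on the specific formalism developed in the appendix.
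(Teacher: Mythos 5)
Your proposal is correct and follows essentially the same route as the paper: invoke Proposition \ref{propform} and Proposition \ref{mult} to get a DG algebra map, observe that a representation defined by integrating a form satisfies the defining conditions of $\Mod(\uC_{\sbullet}(G))$, and for morphisms establish the pointwise intertwining identity $\phi \circ \Phi_V^{(k)} = \Phi_{V'}^{(k)} \circ \phi$ at the identity and propagate it by $G$-equivariance before integrating. In fact you supply more detail than the paper does on conditions (a)--(d), which it dismisses in one sentence; your verifications of (b) via the rank argument and of (d) via the appendix are the right way to fill that in.
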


\begin{proof}
Proposition \ref{mult} implies that $\Iscr(\rho)$ is a morphism of algebras. Since the representation $\Iscr(\rho)$ is given by integration of a differential form, it is an object in $\Mod(\uC_{\sbullet}(G))$.
It only remains to show that a morphism $\phi\colon V \rightarrow V'$ in $\Rep(\TT\gfrak)$ is also a morphism in $\Mod(\uC_{\sbullet}(G))$. Let $\rho$ and $\rho'$ be the homomorphisms giving $V$ and $V'$ the structure of representations of $\TT\gfrak$. Consider also $\Phi_{V} \in \Omega(G, \End(V))$ and $\Phi_{V'}\in \Omega(G, \End(W))$ the corresponding representation forms. We want to show that for all $\sigma \in \uC_{\sbullet}(G)$ and all $v \in V$
\[ \phi \big(\Iscr(\rho)(\sigma)(v)\big)=\Iscr(\rho')(\sigma)( \phi(v)).\]
By the usual Lie theory, the statement is true in case $\sigma$ is a zero simplex.
We claim that
\begin{equation}\label{betas}
\phi \Big(\Phi_V^{(k)}(g)(y_1, \dots,y_k)(v)\Big)=\Phi_{V'}^{(k)}(g)(y_1, \dots, y_k)(\phi(v)). 
\end{equation}
In order to prove this we may assume that $y_i = (dL_g)_e(x_i)$ and compute
\begin{align*}
\phi \Big(\Phi_V^{(k)}(g)(y_1, \dots,y_k)(v)\Big)&= \phi \Big(\rho(g) \circ \Phi_V^{(k)}(e)(x_1, \dots,x_k) (v)\Big)\\
&= \phi \Big(\rho(g) \circ \rho(\iota_{x_1}) \circ \dots \circ \rho(\iota_{x_k}) (v)\Big)\\
&= \rho'(g) \circ \rho'(\iota_{x_1}) \circ \dots \circ \rho'(\iota_{x_k}) (\varphi(v))\\
&=\Phi_{V'}^{(k)}(g)(y_1, \dots, y_k)(\phi(v)).
\end{align*}
We conclude that Equation \eqref{betas} holds. Using this we compute
\begin{align*}
 \phi \big(\Iscr(\rho)(\sigma)(v)\big)&= \phi \left( \int_{\Delta_k} \sigma^*\Phi_V(v)\right)= \int_{\Delta_k} \phi \left(\sigma^*\Phi_V(v)\right)=\int_{\Delta_k}\sigma^*(\phi \circ \Phi_V)(v) \\
 &=\int_{\Delta_k}\sigma^*(\Phi_{V'} \circ \phi(v))=\Iscr(\rho')(\sigma)(\phi(v)). & \qedhere
\end{align*}
\end{proof}

\begin{remark}
The functor $\Iscr$ is natural with respect to Lie group homomorphisms. That is, if $f\colon G \rightarrow H$ is a Lie group homomorphism then the following diagram commutes
\[ \xymatrix{
\Mod(\uC_{\sbullet}(H)) \ar[r]^-{(f_*)^*}& \Mod(\uC_{\sbullet}(G)) \\
\Rep(\TT\mathfrak{h})\ar[r]^-{f^*} \ar[u]^-{\Iscr} & \Rep(\TT\gfrak). \ar[u]_-{\Iscr}
}\]
\end{remark}

\begin{example}
Suppose that $V$ is a representation of  $\TT\gfrak$ with corresponding representation form $\Phi_V$. We will fix vectors $x_1, \dots ,x_k \in \gfrak$ and let $\sigma\colon \Delta_k \rightarrow G$ be the simplex \[\sigma(t_1, \dots, t_k)=e^{t_1 x_1} \cdots e^{t_kx_k}.\]
We also set
\[ A_i= \rho (L_{x_i}) ,\quad B_i = \rho(i_{x_i}).\]
A simple computation shows that
\[ \sigma^*\Phi_V^{(k)}=B_1 e^{t_1 A_1} \cdots B_k e^{t_k A_k} dt_1 \cdots dt_k.\]
It follows that
\[\Iscr(\rho)(\sigma)= \int_{\Delta_k}\sigma^*\Phi_V^{(k)}= \int_{\Delta_k} B_1 e^{t_1 A_1} \cdots B_k e^{t_k A_k} dt_1 \dots dt_k.\]
Let us compute $\rho(\sigma)$ explicitly in the case $k=1$
\begin{eqnarray*}
\Iscr(\rho)(\sigma)= \int_0^1 B e^{tA}dt=B \sum_{k\geq 0} \frac{A^k }{(k+1)!}=B\Big(\frac{e^A-1}{A}\Big).
\end{eqnarray*}
A slightly longer computation shows that in general
\[ \Iscr(\rho)(\sigma)= \sum_{j_1, \dots , j_k \geq 0} \frac{B_1 A^{j_1}_1 \cdots B_kA_k^{j_k}}{j_1! \cdots j_k!(j_k+1) (j_k +j_{k-1}+2)\cdots (j_k+\cdots + j_1+k)}.\]
\end{example}


\subsection{$\Rep(\TT\gfrak)$ is equivalent to $\Mod(\uC_{\bullet}(G))$}
Here we prove our main result, Theorem \ref{main}, which states that the functors $I$ and $\zeta$ are inverses to one another.

\begin{lemma}\label{b1}
Let $\rho\colon \TT\gfrak \rightarrow \End(V)$ be a representation with corresponding representation form $\Phi_V$. Given $x \in \gfrak$  we denote by $\sigma[x]$ the $1$-simplex
$\sigma[x](t)=\exp(tx)$. Then
\[ \sigma[x]^*\Phi_V^{(1)}= \exp(t\rho(L_x)) \circ\rho(i_x)dt.\]
\end{lemma}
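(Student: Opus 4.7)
The plan is to pull back $\Phi_V^{(1)}$ along the one‑parameter subgroup $\sigma[x](t)=\exp(tx)$ by exploiting the two properties of $\Phi_V^{(1)}$ established earlier: its value at the identity is $\rho(i_x)$, and it is $G$‑equivariant with respect to the representation $\rho\colon G\to\GL(V)$ that integrates $\rho(L_x)$.

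First I would compute the velocity of $\sigma[x]$. Since $\exp((t+s)x)=\exp(tx)\exp(sx)$, the tangent vector at time $t$ is
\[ \sigma[x]'(t)=\tfrac{d}{ds}\Big\rvert_{s=0}\exp(tx)\exp(sx)=(dL_{\exp(tx)})_e(x). \]
Therefore
\[ \sigma[x]^*\Phi_V^{(1)}\big\rvert_t=\Phi_V^{(1)}(\exp(tx))\bigl((dL_{\exp(tx)})_e(x)\bigr)\,dt. \]

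Next I would invoke the $G$‑equivariance formula \eqref{equivariant} from Lemma \ref{commutes}, applied with $g=\exp(tx)$ and $y=(dL_g)_e(x)$, to rewrite the above as
\[ \sigma[x]^*\Phi_V^{(1)}\big\rvert_t=\rho(\exp(tx))\circ \Phi_V^{(1)}(e)(x)\,dt=\rho(\exp(tx))\circ\rho(i_x)\,dt. \]
Here I am using the definition of $\Phi_V^{(k)}$ at the identity, namely $\Phi_V^{(1)}(e)(x)=\rho(i_x)$.

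Finally, standard Lie theory gives $\rho(\exp(tx))=\exp(t\rho(L_x))$: the one‑parameter subgroup $t\mapsto \rho(\exp(tx))$ of $\GL(V)$ has infinitesimal generator $\rho(L_x)$, which is precisely how $\rho(L_x)$ was defined in the construction of the representation form. Substituting this into the previous display yields
\[ \sigma[x]^*\Phi_V^{(1)}=\exp(t\rho(L_x))\circ\rho(i_x)\,dt, \]
as claimed. There is no real obstacle here — the computation is a direct application of equivariance together with the identification of $\rho(L_x)$ as the infinitesimal generator of $\rho(\exp(tx))$; the only thing to be careful about is the convention for the velocity of $\sigma[x]$, which uses left translation (consistent with $\gfrak$ being the Lie algebra of left invariant vector fields).
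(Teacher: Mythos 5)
Your argument is correct and coincides with the paper's own proof: both compute the velocity of $\sigma[x]$ as $(dL_{\exp(tx)})_e(x)$, apply the $G$-equivariance of $\Phi_V^{(1)}$ to reduce to its value at the identity, and identify $\rho(\exp(tx))$ with $\exp(t\rho(L_x))$. No differences worth noting.
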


\begin{proof}
One computes
\[\big(\sigma[x]^*\Phi_V^{(1)}\big)_t \left(\frac{\partial}{\partial t}\right)= \Phi_V^{(1)} (\exp(tx))((dL_{\exp(tx)})_e (x))= \rho(\exp(tx))\circ \rho(i_x)=\exp(t\rho(L_x)) \circ \rho(\iota_x). \qedhere\]
\end{proof}

\begin{lemma}\label{left}
The functor $\Dscr$ is left inverse to $\Iscr$. Indeed,
\[ \Dscr \circ \Iscr= \id_{\Rep(\TT\gfrak)}.\]
\end{lemma}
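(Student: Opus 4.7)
The plan is to unpack the definitions of $\Dscr$ and $\Iscr$ and check the claimed identity on the two types of generators $L_x$ and $i_x$ of $\TT\gfrak$. Fix a representation $\rho\colon \TT\gfrak \to \End(V)$ with associated representation form $\Phi_V$, and write $\widehat{\rho} := \Dscr(\Iscr(\rho))$. One has to verify that $\widehat{\rho}(L_x) = \rho(L_x)$ and $\widehat{\rho}(i_x) = \rho(i_x)$ for every $x \in \gfrak$; since $\TT\gfrak$ is generated as a Lie algebra by these elements, this suffices.

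For the $L_x$ component, the 0-simplex $\exp(tx)$ is acted on by $\Iscr(\rho)$ via $\Phi_V^{(0)}(\exp(tx))$. Since $\Phi_V^{(0)}$ is a representation of $G$ by the lemma preceding Proposition \ref{propform}, and the construction of $\Phi_V$ in Proposition \ref{propform} was arranged so that $(d\Phi_V^{(0)})_e(x) = \rho(L_x)$, differentiating at $t = 0$ returns exactly $\rho(L_x)$. This part is essentially bookkeeping from the construction of the representation form.

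For the $i_x$ component one has to compute $\frac{d}{dt}\big|_{t=0} \Iscr(\rho)(\sigma[tx]) = \frac{d}{dt}\big|_{t=0}\int_{\Delta_1} \sigma[tx]^*\Phi_V^{(1)}$. Here Lemma \ref{b1}, applied with the vector $tx$ in place of $x$, gives
\[\sigma[tx]^*\Phi_V^{(1)} = \exp\!\bigl(s\,t\,\rho(L_x)\bigr)\circ t\,\rho(i_x)\,ds,\]
using $\rho(L_{tx}) = t\rho(L_x)$ and $\rho(i_{tx}) = t\rho(i_x)$. Integrating over $s \in [0,1]$ and differentiating in $t$ at $t=0$, the exponential factor becomes $\id_V$ and the factor $t\rho(i_x)$ contributes $\rho(i_x)$, so the integral collapses to $\int_0^1 \rho(i_x)\,ds = \rho(i_x)$, as desired.

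The argument has no serious obstacle: both cases reduce to explicit one-parameter derivatives, and Lemma \ref{b1} already did the real work of pulling back $\Phi_V^{(1)}$ along the exponential simplex. The only care required is in tracking the linearity in $t$ that comes from $L_{tx}=tL_x$ and $i_{tx}=ti_x$, which ensures the cross-term from differentiating the exponential vanishes at $t=0$ and isolates the clean contribution $\rho(i_x)$.
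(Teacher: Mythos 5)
Your proposal is correct and follows essentially the same route as the paper: the $L_x$ case is the classical Lie-theoretic fact (which you justify slightly more explicitly via $\Phi_V^{(0)}$ being a representation of $G$), and the $i_x$ case is exactly the paper's computation, applying Lemma \ref{b1} to $\sigma[tx]$ to get $\sigma[tx]^*\Phi_V^{(1)} = t\exp\bigl(st\,\rho(L_x)\bigr)\circ\rho(i_x)\,ds$ and then integrating over $s$ and differentiating at $t=0$. No gaps.
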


\begin{proof}
Take $V \in \Rep(\TT\gfrak)$ with corresponding representation form $\Phi_V$. Let $\rho,\rho': \TT\gfrak \rightarrow \End(V)$ be the maps that give $V$ and $ \Dscr(\Iscr(V))$ the structure of representations. We need to prove that $ \rho= \rho'$. The fact that $\rho(L_x)=\rho'(L_x)$ is standard. One can use Lemma \ref{b1} to compute $\rho'(\iota_x)$ as follows
\begin{align*}
\rho'(i_x)&=\frac{d}{dt}\Big \vert_{t=0} \rho(\sigma[tx]) 
=\frac{d}{dt}\Big \vert_{t=0} \int_{\Delta_1}\sigma[tx]^*\Phi_V^{(1)}\\
&=\frac{d}{dt}\Big \vert_{t=0} \int_0^1 t \exp(st\rho(L_x)) \circ \rho(i_{x})ds\\
&=\rho(i_x). &\qedhere
\end{align*}
\end{proof}

\begin{lemma} \label{right}
The functor $\Dscr$ is right inverse to $\Iscr$. Indeed,
\[\Iscr \circ  \Dscr = \id_{\Mod(\uC_{\sbullet}(G))}.\]
\end{lemma}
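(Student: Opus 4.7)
The plan is to show that for $V \in \Mod(\uC_{\sbullet}(G))$ with structure morphism $\rho$, the recovered module structure $\Iscr(\Dscr(\rho))$ agrees with $\rho$ on every simplex. The strategy is to represent $\rho$ itself by integration against a differential form on $G$ and then identify that form with the representation form $\Phi_V$ associated to $\Dscr(\rho)$.

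First I would invoke the hypotheses built into the definition of $\Mod(\uC_{\sbullet}(G))$: conditions (b), (c) and (d) say precisely that the matrix coefficients of $\rho$ (in any chosen basis of $V$) are smooth cochains which vanish on thin simplices and are alternating and subdivision invariant. The Kock-type characterization of the image of the de Rham map recalled in Appendix \ref{A1} then produces a unique differential form $\Phi\in\Omega^{\sbullet}(G,\End(V))$ such that
\[ \rho(\sigma)=\int_{\Delta_k}\sigma^*\Phi \]
for every smooth $k$-simplex $\sigma$; the form $\Phi$ is independent of the basis used to set up the matrix coefficients.

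Since $\rho$ is by hypothesis a morphism of DG algebras, Proposition \ref{mult} (applied in the direction ``morphism of DG algebras $\Rightarrow$ representation form'') forces $\Phi$ to be a representation form. By Proposition \ref{propform}, $\Phi$ therefore corresponds to a unique representation $\widetilde{\rho}\colon\TT\gfrak\to\End(V)$ with $\widetilde{\rho}(L_x)=(d\Phi^{(0)})_e(x)$ and $\widetilde{\rho}(i_x)=\Phi^{(1)}(e)(x)$. I would then verify $\widetilde{\rho}=\Dscr(\rho)$: the formula $\Phi^{(0)}(g)=\rho(g)$ (evaluating the integral on the $0$-simplex $g$) gives $(d\Phi^{(0)})_e(x)=\tfrac{d}{dt}|_{t=0}\rho(\exp(tx))=\Dscr(\rho)(L_x)$, while applying the computation of Lemma \ref{b1} to $\Phi$ itself yields
\[ \rho(\sigma[tx])=\int_0^1\sigma[tx]^*\Phi^{(1)}= t\left(\int_0^1\rho(\exp(stx))\,ds\right)\circ\Phi^{(1)}(e)(x), \]
and differentiating at $t=0$ (using $\rho(e)=\id$) produces $\Dscr(\rho)(i_x)=\Phi^{(1)}(e)(x)$.

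Thus $\widetilde{\rho}=\Dscr(\rho)$, and by the bijective correspondence of Proposition \ref{propform} we conclude $\Phi=\Phi_V$, the representation form used to define $\Iscr(\Dscr(\rho))$. Consequently $\rho(\sigma)=\int_{\Delta_k}\sigma^*\Phi=\Iscr(\Dscr(\rho))(\sigma)$ for every simplex $\sigma$, which is the required equality. The main obstacle in this plan is the very first step, the extraction of the differential form $\Phi$ from the cochain-valued map $\rho$; this rests entirely on the synthetic-geometric characterization of the de Rham image developed in Appendix \ref{A1} and is the reason that conditions (b), (c) and (d) were imposed in the definition of $\Mod(\uC_{\sbullet}(G))$. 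Once $\Phi$ is available, the remainder of the argument is a direct matching of the data of $\Phi$ and of $\Phi_V$ in degrees $0$ and $1$ at the identity, via Lemma \ref{injective} and Proposition \ref{propform}.
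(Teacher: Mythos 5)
Your proposal is correct and follows essentially the same route as the paper: extract the form $\Phi$ from $\rho$ via the alternating/subdivision-invariant/thin-vanishing hypotheses and Theorem \ref{A}, use Proposition \ref{mult} to see it is a representation form, and then identify it with $\Phi_V$ by matching the degree $0$ and $1$ data and invoking uniqueness (Lemma \ref{injective}, packaged in your case through Proposition \ref{propform}). The only cosmetic difference is that you verify the degree $0$ and $1$ agreement by direct differentiation, whereas the paper cites the already-proved identity $\Dscr\circ\Iscr=\id$ to the same effect.
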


\begin{proof}
Consider a representation $\rho\colon \uC_{\sbullet}(G) \rightarrow \End(V) \in \Mod(\uC_{\sbullet}(G))$. Theorem \ref{A} implies that there exists a differential form 
$\Phi \in \Omega^{\sbullet}(G, \End(V))$ such that
\[ \rho(\sigma)=\int_{\Delta_k} \sigma^*\Phi.\]
Lemma \ref{mult} implies that $\Phi$ is a representation form. Let $\Psi$ be the representation form associated to $\Iscr(\Dscr (V))$.
Since $\Dscr(\Iscr(\Dscr (V)))= \Dscr(V)$ we conclude that $\Phi$ and $\Psi$ coincide in degrees $0$ and $1$. Since both are representation forms, Lemma \ref{injective} implies that they are equal. We conclude that $\Iscr(\Dscr (V))=V$.
\end{proof}

Putting together Lemma \ref{left} and Lemma \ref{right} we conclude the following.

\begin{theorem}\label{main}
Let $G$ be a simply connected Lie group with Lie algebra $\gfrak$. The functors 
$$
\Iscr \colon \Rep(\TT\gfrak)\rightarrow \Mod(\uC_{\sbullet}(G))
$$
and
$$
\Dscr\colon\Mod(\uC_{\sbullet}(G)) \rightarrow \Rep(\TT\gfrak)
$$
are equivalences of monoidal categories which are inverses to one another.
\end{theorem}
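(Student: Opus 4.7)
The plan is to assemble the two lemmas that directly precede the theorem. Lemma~\ref{left} gives $\Dscr \circ \Iscr = \id_{\Rep(\TT\gfrak)}$, while Lemma~\ref{right} gives $\Iscr \circ \Dscr = \id_{\Mod(\uC_{\sbullet}(G))}$. Together these identities say that $\Iscr$ and $\Dscr$ are mutually strict inverses, which is a stronger assertion than equivalence and so in particular establishes the categorical equivalence at the underlying-category level. No separate verification of fully-faithfulness or essential surjectivity is required.

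For the enhancement to a monoidal equivalence, I would appeal to the last assertion of Theorem~\ref{dif}, which already established that $\Dscr$ is monoidal. Since $\Iscr$ is a strict two-sided inverse of $\Dscr$, it inherits a canonical monoidal structure by transport: the coherence isomorphism $\Iscr(V) \otimes \Iscr(V') \xrightarrow{\sim} \Iscr(V \otimes V')$ is obtained by applying $\Iscr$ to the monoidality isomorphism of $\Dscr$ evaluated at $\Iscr(V)$ and $\Iscr(V')$, and the pentagon and hexagon axioms transport along the equality $\Dscr \circ \Iscr = \id$. One could alternatively verify the monoidality of $\Iscr$ directly: the Künneth-type decomposition $\mu^*\Phi^{(k)} = \sum_{i+j=k}(-1)^{ij}\pi_1^*\Phi^{(i)}\wedge \pi_2^*\Phi^{(j)}$ established in Lemma~\ref{product} is exactly what matches the tensor product action on modules over the Hopf algebra $\uC_{\sbullet}(G)$ induced by the Alexander--Whitney coproduct, and this compatibility is essentially the product-preservation content already packaged in Proposition~\ref{mult}.

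The hard part of Theorem~\ref{main} is therefore already done: all analytic and combinatorial content lives in the construction of the representation form (Proposition~\ref{propform}), its compatibility with multiplication and the differential (Proposition~\ref{mult}), and the two mutual-inverse lemmas. The statement is a packaging of these results, and the plan reduces to citing them.
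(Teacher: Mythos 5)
Your proposal matches the paper's proof exactly: the paper derives Theorem~\ref{main} simply by combining Lemma~\ref{left} ($\Dscr\circ\Iscr=\id$) and Lemma~\ref{right} ($\Iscr\circ\Dscr=\id$), with the monoidality of $\Dscr$ already supplied by Theorem~\ref{dif}. Your additional remarks on transporting the monoidal structure to $\Iscr$ (or verifying it directly via Lemma~\ref{product}) fill in a detail the paper leaves implicit, but the route is the same.
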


\subsection{The case where $G$ is not simply connected}

So far we have only considered simply connected Lie groups. We will describe a version of Theorem \ref{main} in the case of Lie groups that are not necessarily simply connected. 
\begin{definition}
Let $G$ be a Lie group with Lie algebra $\gfrak$. The category $\Rep_G(\TT\gfrak)$ is the full subcategory of $\Rep(\TT\gfrak)$ that consists of representations $\rho\colon \TT\gfrak \rightarrow \End(V)$ such that the corresponding representation of $\gfrak$ can be integrated to a representation of $G$.
\end{definition}
Of course, in the case where $G$ is simply connected, the category  $\Rep_G(\TT\gfrak)$ is equal to $\Rep(\TT\gfrak)$.

\begin{lemma}
Let $G$ be a connected Lie group with universal cover $\pi\colon \widetilde{G} \rightarrow G$ and denote by $\iota\colon Z \hookrightarrow \widetilde{G}$ the inclusion of the kernel of $\pi$. The pull-back functor
\[ (\pi_*)^*\colon  \Mod(\uC_{\sbullet}(G)) \rightarrow  \Mod(\uC_{\sbullet}(\widetilde{G}))\]
is fully faithful and identifies the category $\Mod(\uC_{\sbullet}(G))$ with the subcategory of $\Mod(\uC_{\sbullet}(\widetilde{G}))$
that consists of modules $V$ such that $(\iota_*)^*(V)$ is trivial.
\end{lemma}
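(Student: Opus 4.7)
The plan is to exploit two facts: (i) the pushforward $\pi_*\colon \uC_{\sbullet}(\widetilde{G}) \to \uC_{\sbullet}(G)$ is surjective on smooth chains, because the standard simplex $\Delta_k$ is simply connected and so every smooth simplex $\sigma\colon \Delta_k\to G$ admits a smooth lift along the covering map $\pi$; and (ii) any two such lifts $\widetilde{\sigma}_1,\widetilde{\sigma}_2$ of the same $\sigma$ differ by a unique $z \in Z$, in the strong sense that $\widetilde{\sigma}_2 = z \cdot \widetilde{\sigma}_1$ as chains in $\uC_{\sbullet}(\widetilde{G})$ (left-multiplication by the zero-simplex $z$ via the DG Hopf algebra structure). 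Both (i) and (ii) are standard covering space theory, translated to chains.

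Given (i), full faithfulness of $(\pi_*)^*$ is immediate: if $V, W \in \Mod(\uC_{\sbullet}(G))$ have actions $\rho_V,\rho_W$ and $\phi\colon V \to W$ is a morphism between the pullbacks, then for any $\sigma\in \uC_{\sbullet}(G)$ one picks a lift $\widetilde{\sigma}$ and computes
\[\phi\circ \rho_V(\sigma) = \phi\circ ((\pi_*)^*\rho_V)(\widetilde{\sigma}) = ((\pi_*)^*\rho_W)(\widetilde{\sigma})\circ\phi = \rho_W(\sigma)\circ\phi.\]
The easy direction of the image characterization is equally immediate: if $V = (\pi_*)^* W$ and $z\in Z$, then $\rho_V(z) = \rho_W(\pi(z)) = \rho_W(e) = \id$, while higher-degree simplices into the discrete set $Z$ are constant, hence thin, hence act as zero by condition (b); this is precisely the trivial $\uC_{\sbullet}(Z)$-module structure.

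For the converse, I would start with $V\in \Mod(\uC_{\sbullet}(\widetilde{G}))$ such that $\widetilde{\rho}(z) = \id$ for every $z\in Z$, and define a candidate descent $\rho(\sigma) := \widetilde{\rho}(\widetilde{\sigma})$ by choosing any lift $\widetilde{\sigma}$ of $\sigma$. Well-definedness is forced by (ii): for any other lift $\widetilde{\sigma}' = z\cdot \widetilde{\sigma}$,
\[\widetilde{\rho}(\widetilde{\sigma}') \;=\; \widetilde{\rho}(z\cdot \widetilde{\sigma}) \;=\; \widetilde{\rho}(z)\circ\widetilde{\rho}(\widetilde{\sigma}) \;=\; \widetilde{\rho}(\widetilde{\sigma}).\]
Since $\pi_*$ is surjective and intertwines the DG Hopf algebra structures on both sides, $\rho$ inherits from $\widetilde{\rho}$ the property of being a DG algebra morphism, and $(\pi_*)^*\rho = \widetilde{\rho}$ holds by construction.

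The main obstacle is checking that $\rho$ truly lies in the subcategory $\Mod(\uC_{\sbullet}(G))$, i.e. satisfies the regularity conditions (b)–(d). Vanishing on thin simplices transfers because $\pi$ is a local diffeomorphism, so a thin simplex downstairs has only thin lifts upstairs. Smoothness (c) and the alternating/subdivision-invariance condition (d) are local on the parameter space: given a smooth family $\sigma_M\colon M\times \Delta_k \to G$, one covers $M$ by simply connected opens $U$ over which $\sigma_M$ admits a smooth lift $\widetilde{\sigma}_U\colon U\times \Delta_k \to \widetilde{G}$, and on $U$ the descent is simply $\rho(\sigma_M(p,-)) = \widetilde{\rho}(\widetilde{\sigma}_U(p,-))$, which is smooth by property (c) for $\widetilde{\rho}$. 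Patching over different choices of $U$ is consistent by the well-definedness already established, so the regularity conditions transfer cleanly.
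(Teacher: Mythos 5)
Your proof is correct and follows essentially the same route as the paper's: full faithfulness via lifting each simplex $\sigma\colon\Delta_k\to G$ along the covering (possible since $\Delta_k$ is simply connected), and descent of a module with trivial $\uC_{\sbullet}(Z)$-action by setting $\rho(\sigma):=\widetilde{\rho}(\widetilde{\sigma})$, with well-definedness coming from the fact that any two lifts differ by multiplication by an element of $Z$. The only difference is that you spell out why the descended $\rho$ satisfies the regularity conditions (b)--(d), a verification the paper compresses into ``one immediately checks''; your added detail is sound.
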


\begin{proof}
Let us first show that the the functor $(\pi_*)^*$ is fully faithful. Take  representations $\rho\colon \uC_{\sbullet}(G) \rightarrow \End(V)$ and $\rho'\colon \uC_{\sbullet}(G) \rightarrow \End(V'))$. We need to show that a linear map $\phi\colon V \rightarrow V'$ commutes with the action of $\uC_{\sbullet}(G)$ if an only if it commutes with the action of $\uC_{\sbullet}(\widetilde{G})$. One direction follows from the functoriality. For the other direction, suppose that $\phi\colon V \rightarrow W$ commutes with the action of $\uC_{\sbullet}(\widetilde{G})$. Consider a simplex $\sigma\colon \Delta_k \rightarrow G$. Since $\Delta_k$ is simply connected and $\pi$ is a covering map, there is a simplex $\widetilde{\sigma}\colon \Delta_k \rightarrow \widetilde{G}$ such that $\pi \circ \widetilde{\sigma}=\sigma$. Then
\begin{eqnarray*}
\rho'(\sigma)(\phi(v))&=&\rho'( \pi \circ \widetilde{\sigma})(\phi(v))= (\pi^*\rho')(\widetilde{\sigma})(\phi(v))=\phi( (\pi^*\rho)(\widetilde{\sigma})(v))\\
&=&\phi( \rho(\pi \circ \widetilde{\sigma})(v)) =\phi( \rho(\sigma)(v)).
\end{eqnarray*}
Since $\sigma$ is arbitrary, we conclude that $\phi$ commutes with the action of $\uC_{\sbullet}(G)$. It remains to compute the image of $(\pi_*)^*$. Clearly, a representation of the form $\pi^*\rho$ vanishes on $\uC_{\sbullet}(Z)$. 

Conversely, suppose that $\widetilde{\rho}\colon \uC_{\sbullet}(\widetilde{G}) \rightarrow \End(V)$ is such that $\iota^*\rho$ is trivial. We need to show that $\widetilde{\rho}= \pi^*\rho$. We define $\rho\colon \uC_{\sbullet}(G) \rightarrow \End(V)$ by
\[ \rho(\sigma) = \widetilde{\rho}(\widetilde{\sigma}),\]
where $\widetilde{\sigma}$ is any lift of $\sigma$.  If $\widetilde{\sigma}'$ is a different lift of $\sigma$ then there exists $g \in Z$ such that $\widetilde{\sigma}'= \widetilde{\sigma} g$ and therefore
\[ \widetilde{\rho}(\widetilde{\sigma}')=\widetilde{\rho}(\widetilde{\sigma}) \circ \widetilde{\rho}(g)=\widetilde{\rho}(\widetilde{\sigma}).\]
We conclude that $\rho$ is well defined. One immediately checks that $\rho$ is a representation and, by construction, $\pi^*\rho= \widetilde{\rho}$.
\end{proof}

\begin{theorem}\label{nosc}
Let $G$ be a connected Lie group with Lie algebra $\gfrak$. The functor
$\Dscr\colon\Mod(\uC_{\sbullet}(G)) \rightarrow \Rep_G(\TT\gfrak)$ is an equivalence of categories.
\end{theorem}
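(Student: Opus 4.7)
My plan is to reduce the general connected case to the simply connected case via the universal cover. Let $\pi\colon \widetilde{G} \to G$ be the universal covering, with discrete central kernel $Z$, and let $\iota\colon Z \hookrightarrow \widetilde{G}$ be the inclusion. By Theorem \ref{main} applied to the simply connected group $\widetilde{G}$, the functor $\Dscr\colon \Mod(\uC_{\sbullet}(\widetilde{G})) \to \Rep(\TT\gfrak)$ is an equivalence, and by the naturality remark following Theorem \ref{dif} it intertwines $(\pi_*)^*$ with the identity on $\Rep(\TT\gfrak)$. So the main task reduces to identifying the image of the fully faithful embedding $(\pi_*)^*\colon \Mod(\uC_{\sbullet}(G)) \hookrightarrow \Mod(\uC_{\sbullet}(\widetilde{G}))$, transported to $\Rep(\TT\gfrak)$, with the subcategory $\Rep_G(\TT\gfrak)$.

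The preceding lemma already tells us that the image of $(\pi_*)^*$ consists of modules $V$ over $\uC_{\sbullet}(\widetilde{G})$ on which $(\iota_*)^*V$ is trivial. Because $Z$ is discrete, $\uC_{\sbullet}(Z)$ is generated in nonzero degrees by thin simplices, which act as zero by condition (b) of the definition of $\Mod(\uC_{\sbullet}(G))$; so the condition $(\iota_*)^*V = 0$ is equivalent to the statement that the $\widetilde{G}$-representation on $V$ obtained from the action of zero-simplices is trivial on $Z$, i.e.\ factors through a representation of $G$. But this $\widetilde{G}$-representation is precisely the integration of the underlying $\gfrak$-representation of $\Dscr(V)$. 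Therefore $V$ lies in the image of $(\pi_*)^*$ if and only if $\Dscr(V)\in \Rep_G(\TT\gfrak)$.

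Combining these observations, $\Dscr\colon \Mod(\uC_{\sbullet}(G)) \to \Rep_G(\TT\gfrak)$ is the composite of an equivalence with the identification of a full subcategory with its essential image, hence is an equivalence. For the converse direction of essential surjectivity, I would note that starting from any $W \in \Rep_G(\TT\gfrak)$, the $\widetilde{G}$-action on $\Iscr(W)$ provided by Theorem \ref{main} factors through $G$ by hypothesis, so $\Iscr(W)$ is trivial on $\uC_{\sbullet}(Z)$ and descends through $(\pi_*)^*$ to an object of $\Mod(\uC_{\sbullet}(G))$ whose differentiation is $W$.

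The only delicate step is the equivalence between the chain-level condition $(\iota_*)^*V=0$ and the Lie-theoretic condition that the $\gfrak$-representation integrate to $G$; this hinges on the thin-simplex vanishing axiom, which makes the discrete subgroup $Z$ contribute only through its zero simplices, so that "trivial as a $\uC_{\sbullet}(Z)$-module" reduces cleanly to "trivial as a $Z$-representation." Once that bridge is in place, no further computation is needed beyond quoting Theorem \ref{main} and the preceding lemma.
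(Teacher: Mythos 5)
Your proof is correct and follows essentially the same route as the paper: reduce to the universal cover via Theorem \ref{main} and the naturality of $\Dscr$, then use the lemma identifying the image of $(\pi_*)^*$ with the modules trivial on $\uC_{\sbullet}(Z)$. In fact you supply a detail the paper leaves implicit, namely that the thin-simplex axiom is what collapses ``trivial as a $\uC_{\sbullet}(Z)$-module'' to ``trivial as a $Z$-representation,'' which is the correct justification for that step.
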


\begin{proof}
By Theorem \ref{main} we know that 
\[ \widetilde{\Dscr}:  \Mod(\uC_{\sbullet}(\widetilde{G})) \rightarrow \Rep(\TT\gfrak)\]
is an isomorphism of categories. By naturality of the construction we know that
\[ \widetilde{\Dscr} \circ (\pi_*)^*=\Dscr.\]
Since both $\widetilde{\Dscr}$ and $\pi^*$ are fully faithful, so is $\Dscr$. Let us show that $\Dscr$ is essentially surjective. Take $V$ in $\Rep_G(\TT\gfrak)$. We know that there exists $W \in \Mod(\uC_{\sbullet}(\widetilde{G}))$ such that $\widetilde{\Dscr}(W)=V$. Since the representation of $\gfrak$ associated to $V$ can be integrated to $G$ we know that $(\iota_*)^*(W)$ is trivial. This implies that $W=(\pi_*)^*(W')$. Therefore $V= \Dscr(W')$.
\end{proof}


\section{The left adjoint to the forgetful functor}\label{examples}
There is a forgetful functor
\[ F\colon\Rep(\TT\gfrak) \rightarrow \Rep(\gfrak)\]
which is the restriction along the inclusion $\iota\colon \gfrak \hookrightarrow \TT\gfrak$.  This forgetful functor admits a left adjoint $U\colon \Rep(\gfrak) \rightarrow \Rep(\TT\gfrak)$ defined as follows. Given a representation $V$ of $\gfrak$, $U(V)$ is the cochain complex $\uC_{\sbullet}(\gfrak,V)$ with action of $\TT\gfrak$ given by
\[ \rho(L_x)( x_1\wedge \cdots \wedge x_k \otimes v)=\sum_{i=1}^k x_1 \wedge \cdots \wedge [x, x_i] \wedge \cdots x_k \otimes v+ x_1\wedge \cdots \wedge x_k \otimes \rho(x)(v),\]
and
\[ \rho(i_x)( x_1\wedge \cdots \wedge x_k \otimes v)=x\wedge x_1\wedge \cdots \wedge x_k \otimes v.\]

\begin{proposition}\label{K}
The functor $U\colon \Rep(\gfrak) \rightarrow \Rep(\TT\gfrak)$ is left adjoint to $F\colon \Rep(\TT\gfrak) \rightarrow \Rep(\gfrak)$. Moreover, $U$ is faithful.
\end{proposition}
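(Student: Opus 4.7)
The plan is to exhibit the adjunction by constructing mutually inverse natural maps
\[
\Phi_{V,W}\colon \Hom_{\Rep(\TT\gfrak)}(U(V),W) \longrightarrow \Hom_{\Rep(\gfrak)}(V,F(W))
\]
and $\Psi_{V,W}$ in the opposite direction. The forward map is restriction to the degree-zero generators, $\Phi(\phi)(v):=\phi(1\otimes v)$; this is $\gfrak$-equivariant because $\rho_{U(V)}(L_x)(1\otimes v)=1\otimes \rho_V(x)(v)$, the wedge-bracket terms being absent when no $x_i$'s are present. The reverse map is forced by the defining $i_x$-action on $U(V)$:
\[
\Psi(\psi)(x_1\wedge\cdots\wedge x_k\otimes v):=\rho_W(i_{x_1})\circ\cdots\circ \rho_W(i_{x_k})\psi(v).
\]
This is well-defined on $\uS^{\sbullet}(\uu\gfrak)\otimes V$ because $[\rho_W(i_x),\rho_W(i_y)]=0$ in $\End(W)$.

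Once this is set up, the substance is to check that $\Psi(\psi)$ really is a morphism in $\Rep(\TT\gfrak)$. Compatibility with each $\rho_W(i_x)$ is immediate from the definition. Compatibility with $\rho_W(L_x)$ follows by iterating the Cartan relation $[L_x,i_y]=i_{[x,y]}$: sliding $\rho_W(L_x)$ across the string $\rho_W(i_{x_1})\circ\cdots\circ \rho_W(i_{x_k})$ produces precisely the $k$ bracket terms in the formula for $L_x$ on $U(V)$, and the residual term $\rho_W(L_x)\psi(v)$ matches $\psi(\rho_V(x)(v))$ by the $\gfrak$-equivariance of $\psi$.

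I expect the main obstacle to be the compatibility with the differentials, since $\Psi(\psi)$ must intertwine the Chevalley-Eilenberg differential on $U(V)=\uC_{\sbullet}(\gfrak,V)$ with the internal differential $\delta_W$. My strategy is to commute $\delta_W$ past the string of $\rho_W(i_{x_j})$'s using $[\delta_W,\rho_W(i_x)]=\rho_W(L_x)$: this produces a sum of $k$ terms in which one $\rho_W(i_{x_j})$ is replaced by $\rho_W(L_{x_j})$, plus a tail in which $\delta_W$ lands on $\psi(v)$. Pushing each $\rho_W(L_{x_j})$ back to the right using $[L_{x_j},i_{x_l}]=i_{[x_j,x_l]}$, and using the $\gfrak$-equivariance of $\psi$ on the tail, one recovers precisely the two sums appearing in $\delta_{U(V)}$. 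The computation is a bookkeeping exercise with signs, but the only input is the Cartan relations on $\TT\gfrak$.

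With $\Phi$ and $\Psi$ well-defined, mutual inverseness is direct: $\Psi(\Phi(\phi))$ and $\phi$ agree on $1\otimes v$ and both commute with every $\rho(i_x)$, hence agree on all generators of $U(V)$, while $\Phi(\Psi(\psi))(v)=\Psi(\psi)(1\otimes v)=\psi(v)$. Naturality in $V$ and $W$ is routine. Finally, $U$ is faithful because $U(f)(1\otimes v)=1\otimes f(v)$, so the unit $\eta_V\colon V\hookrightarrow F(U(V))$ is split injective and therefore $f\mapsto U(f)$ is injective on morphism sets.
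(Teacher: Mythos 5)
Your proposal is correct and takes essentially the same route as the paper: the adjunction is the restriction-to-degree-zero map in one direction and the extension $\phi(x_1\wedge\cdots\wedge x_k\otimes v)=\rho(i_{x_1})\circ\cdots\circ\rho(i_{x_k})\phi_0(v)$ in the other, with faithfulness immediate from the split unit. The only difference is one of emphasis: the paper devotes most of its proof to checking that the stated action on $\uC_{\sbullet}(\gfrak,V)$ satisfies the Cartan relations (so that $U$ really lands in $\Rep(\TT\gfrak)$) and then asserts without computation that the extension is a $\TT\gfrak$-morphism, whereas you do the opposite; since the paper counts that verification as part of this proposition, a complete writeup should include it, though it is the same kind of Cartan-relation bookkeeping you already outline for $\Psi(\psi)$.
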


\begin{proof}
It is obvious that $\rho([L_x,L_y])= [\rho(L_x),\rho(L_y)]$ and that $[\rho(i_x), \rho(i_y)]=0$. To show that $[\rho(L_x), \rho(i_y)]=\rho(i_{[x,y]})$ one computes
\begin{eqnarray*}
&&\!\!\!\!\!\!\!\!\!\!\!\!\!\!\!\!\!\!\!\!\!\!\!\!\!\!\!\!\!\!\!\![\rho(L_x), \rho(i_y)](x_1\wedge \dots \wedge x_k \otimes v )\\
&=&\rho(L_x)\circ \rho(i_y)(x_1\wedge \dots \wedge x_k \otimes v )-\rho(i_y)\circ \rho(L_x)(x_1\wedge \dots \wedge x_k \otimes v )\\
&=&\rho(L_x)(y\wedge x_1\wedge \dots \wedge x_k \otimes v )-  \sum_{i=1}^k y\wedge x_1 \wedge \dots \wedge [x, x_i] \wedge \dots x_k \otimes v\\
&&- y \wedge x_1 \wedge \dots \wedge x_k \wedge \rho(x)(v)\\
&=&[x,y]\wedge x_1\wedge \dots \wedge x_k \otimes v +\sum_{i=1}^k y\wedge x_1 \wedge \dots \wedge [x, x_i] \wedge \dots x_k \otimes v\\
&&+ y \wedge x_1 \wedge \dots \wedge x_k \wedge \rho(x)(v) -\sum_{i=1}^k y\wedge x_1 \wedge \dots \wedge [x, x_i] \wedge \dots x_k \otimes v\\
&&- y \wedge x_1 \wedge \dots \wedge x_k \wedge \rho(x)(v)\\
&=&\rho(\iota_{[x,y]}) x_1 \wedge \dots \wedge x_k \otimes v.
\end{eqnarray*}
Let us show that $\delta(\rho(i_x))= \rho(L_x)$ by computing
\begin{eqnarray*}
&&\!\!\!\!\!\!\!\!\!\!\!\!\!\!\!\!\!\!\!\!\!\!\!\!\delta(\rho(i_x))(x_1 \wedge \cdots \wedge x_k \otimes v)\\
&=&\delta (x \wedge x_1 \wedge \cdots \wedge x_k \otimes v)+ x\wedge \delta(x_1 \wedge \cdots \wedge x_k \otimes v)\\
&=& \sum_{i<i}(-1)^{i+j+1} [x_i,x_j]\wedge x \wedge \cdots \wedge \widehat{x}_i \wedge \cdots \wedge \widehat{x}_j \wedge \dots x_k \otimes v\\
&&+ \sum_{i=1}^k (-1)^{i+1}[x,x_i]\wedge x_1 \wedge \cdots \wedge \widehat{x}_i \wedge \cdots \wedge x_k \otimes v + x_1 \wedge \cdots \wedge x_k \otimes \rho(x)(v)\\
&&+ \sum_i (-1)^{i}x\wedge x_1 \wedge \dots \wedge \widehat{x}_i \wedge \cdots \wedge x_k \otimes \rho(x_i)(v)\\
&&+ \sum_{i<i}(-1)^{i+j+1}x\wedge [x_i,x_j] \wedge \cdots \wedge \widehat{x}_i \wedge \cdots \wedge \widehat{x}_j \wedge \dots x_k \otimes v\\
&&+ \sum_i (-1)^{i+1}x\wedge x_1 \wedge \cdots \wedge  \widehat{x}_i \wedge \cdots \wedge x_k \otimes \rho(x_i)(v) \\
&=&\sum_{i=1}^k x_1 \wedge \cdots \wedge [x, x_i] \wedge \cdots \wedge x_k \otimes v+ x_1\wedge \cdots \wedge x_k \otimes \rho(x)(v)\\
&=& \rho(L_x) (x_1 \wedge \cdots \wedge x_k \otimes v).
\end{eqnarray*}
It is clear that $U$ is faithful. Let us see that it is left adjoint to $F$. There is a natural map
\[ \Hom_{\Rep(\gfrak)}(K(V),V') \rightarrow \Hom_{\Rep(\TT\gfrak)}(V, F(V')),\]
that sends $\phi$ to $\phi_0$, its restriction to $\uC_0(\gfrak,V)=V$. This map is injective, since
\begin{equation} \label{x}
\phi( x_1\wedge \dots \wedge x_k \otimes v)=\phi( i_{x_1} \circ \dots \circ i_{x_k} (v)= i_{x_1} \circ \dots \circ i_{x_k} (\phi_0(v)). 
\end{equation}
Equation \eqref{x} also provides a way to extend any $\gfrak$ equivariant map from $V$ to $F(V')$ to a $\TT\gfrak$ map from $U(V)$ to $V'$. 
\end{proof}

\begin{remark}
Let us denote by $\Rep^0(\gfrak)$ the full subcategory of $\Rep(\gfrak)$ which consists of vector spaces concentrated in degree zero. The restriction of $U$ to $\Rep^0(\gfrak)$ is fully faithful.
\end{remark}

The relationship between the functors we have discussed and the standard functors in Lie theory is described in the following commutative diagram
\[ \xymatrix{
\Rep(G) \ar[r]^-{\mathsf{Lie}}&  \Rep(\gfrak)\ar[r]^{\mathsf{Lie}^{-1}}&\Rep(G)\\
\Mod(\uC_{\sbullet}(G)) \ar[r]^-{\Dscr}\ar[u]^-{F}&\ar[u]^-{F}  \Rep(\TT\gfrak)\ar[r]^-{\Iscr} &\ar[u]^-{F}\Mod(\uC_{\sbullet}(G)) \\
\ar[u]^-{U}\Rep(G)\ar[r]^{\mathsf{Lie}} & \ar[u]^-{U} \Rep(\gfrak)\ar[r]^{\mathsf{Lie}^{-1}}&\ar[u]^-{U} \Rep(G).}\]

 There is also a functor
\[ E: \Rep(\gfrak) \rightarrow \Rep(\TT\gfrak)\]
which sends $V$ to $\CE(\gfrak,V)$, which is a $\TT\gfrak$ module with structure given by
\[L_x( \xi \otimes v)=L_x\xi \otimes v+ \xi \otimes L_x v, \quad
\iota_x(\xi \otimes v)= \iota_x\xi\otimes v.\]
Clearly, $[L_x, L_y]=L_{[x,y]}$ and $[i_x,i_y]=0$. Let us show that $[L_x,i_y]=i_{[x,y]}$. We compute
\begin{eqnarray*}
[L_x,i_y](\xi \otimes v)&=&L_x (i_y\xi \otimes v) - i_y( L_x\xi \otimes v + \xi \otimes L_x v)\\
&=&L_x i_y \xi \otimes v+ i_y\xi \otimes L_x v - i_y L_x \xi \otimes v -\iota_y\xi \otimes L_xv\\
&=&\iota_{[x,w]}(\xi \otimes v).
\end{eqnarray*}
It remains to show that $[\delta ,i_x]=L_x$.  Since both operators are derivations with respect to the $\CE(\gfrak)$-module structure, it suffices to check the identity for $\vert \xi \vert \in \{0,1\}$. For $v$ in $V$ we compute
\[ [\delta, i_x](v)=i_x(\delta(v))=L_x(v).\]
Finally, for $\xi \in \CE^1(\gfrak)$ we compute
\begin{eqnarray*}
 [\partial, i_x](\xi \otimes v)(y)&=&\partial (i_x\xi \otimes v)(y) + i_x(\delta(\xi \otimes v))(y)\\
 &=&\xi(x)L_y v-\delta(\xi \otimes v)(x,y)\\
  &=&\xi(x)L_y v- \xi([x,y])\otimes v -\xi(x)L_y v+\xi(y)L_x v\\
  &=&L_x(\xi\otimes v)(y).
 \end{eqnarray*}

\begin{remark}
The restriction of $E$ to $\Rep^0(\gfrak)$ is fully faithful.
\end{remark}

The relationship between the present constructions and the standard functors in Lie theory is described in the following commutative diagram
\[ \xymatrix{
\Rep(G) \ar[r]^-{\mathsf{Lie}}&  \Rep(\gfrak)\ar[r]^-{\mathsf{Lie}^{-1}}&\Rep(G)\\
\Mod(\uC_{\sbullet}(G)) \ar[r]^-{\Dscr}\ar[u]^-{F}&\ar[u]^-{F}  \Rep(\TT\gfrak)\ar[r]^-{\Iscr} &\ar[u]^-{F}\Mod(\uC_{\sbullet}(G)) \\
\ar[u]^-{E}\Rep(G)\ar[r]^{\mathsf{Lie}} & \ar[u]^-{E} \Rep(\gfrak)\ar[r]^-{\mathsf{Lie}^{-1}}&\ar[u]^-{E} \Rep(G).}\]


\appendix
\section{Synthetic geometry and the image of the De Rham map}\label{A1}
We will adapt results  from \cite{Kock}  to characterize those singular cochains on $M$ that are given by integration of differential forms.

\begin{definition}
Let $M$ be a manifold. The vector space $\uC^{\square}_k(M)$ is the real vector space generated by smooth maps:
\[ \theta: I^k \rightarrow M. \]
The vector space $\uC^k_{\square}(M)$ is the vector space dual to $\uC^{\square}_k(M)$. For each $i=1,\dots, k$ and $a=0,1$
there is a map
\[ \delta^a_i\colon I^{k-1} \rightarrow I^k,\quad  (t_1, \dots,t_{k-1})=(t_1,\dots , t_{i-1},a, t_i, \dots ,t_{k-1}).\]
The linear map
\[ \delta\colon \uC^{k-1}_{\square}(M) \rightarrow \uC^k_{\square}(M), \quad \delta= \sum_{i=1}^k (-1)^i\Big((\delta^0_i)^*- (\delta^1_i)^*\Big)\]
satisfies $\delta^2=0$. The cubical complex of $M$ is the cochain complex $(\uC^{\sbullet}_{\square}(M),\delta)$.
\end{definition}

For each $k \geq1$, $i=1,\dots, k$, and  $s \in [0,1]$ we define maps:
\[ \alpha_i(s)\colon I^{k} \rightarrow I^k,\quad  (t_1, \dots, t_k) \mapsto (t_1, \dots, st_i, \dots ,t_k)\]
and
\[ \beta_i(s)\colon I^k \rightarrow I^k, \quad (t_1, \dots, t_k) \mapsto (t_1, \dots, (1-s) - st_i, \dots ,t_k)\]

\begin{definition}
We say that a cubical cochain $c \in \uC^k_{\square}(M)$  is subdivision invariant if it satisfies
\[ c(\theta) = c(\theta \circ \alpha_i(s)) + c( \theta \circ \beta_i(1-s))\]
for all $\theta\colon I^k \rightarrow M$, $i=1,\dots, k$, and all $s \in [0,1]$.
\end{definition}

\begin{definition}
A cubical cochain $c \in \uC^k_\square(M)$ is called alternating if for any permutation $\chi \in \mathfrak{S}_k$ and any $\theta\colon I^k \rightarrow M$, one has:
\[ c(\theta \circ \chi_*)= \mathrm{sgn}(\chi) c(\theta),\]
where $\chi_*\colon I^k \rightarrow I^k$ is the map $(t_1, \dots, t_k) \mapsto (t_{\chi(1)},\dots, t_{\chi(k)})$.
\end{definition}

The following result from \cite{Kock} characterizes those cubical cochains which are given by integration of 
a differential form.

\begin{theorem}[Theorem 3.4.4 of \cite{Kock}]\label{Kock}
Let $c \in \uC_{\square}^k(M)$ be a cubical cochain which is alternating and subdivision invariant. There exists a unique differential form $\omega  \in \Omega^k(M)$ such that:
\[ c(\theta)= \int_{I^k} \theta^*\omega,\]
for all maps $\theta\colon I^k \rightarrow M$. Moreover, this integration map is a bijection between the set of 
differential $k$-forms on $M$ and the set of alternating, subdivision invariant cubical cochains.
\end{theorem}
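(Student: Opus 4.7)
The plan is to adapt Kock's original argument from \cite{Kock}, breaking the statement into three pieces: (i) integration of differential forms yields alternating, subdivision-invariant cubical cochains; (ii) the assignment $\omega \mapsto (\theta \mapsto \int_{I^k}\theta^*\omega)$ is injective; (iii) every alternating, subdivision-invariant cubical cochain arises this way. Step (i) is straightforward: naturality of the pullback and change of variables under an orientation-preserving reparametrization handle the alternating property, while additivity of the Lebesgue integral under splitting $[0,1] = [0,s]\cup[s,1]$ in the $i$-th coordinate handles subdivision invariance. Step (ii) is essentially the standard fact that a smooth $k$-form on $M$ is determined by its integrals over arbitrarily small parametrized cubes: if $\omega - \omega'$ were nonzero at some $p$, one could find coordinates and a cube $\theta$ whose image lies in a chart around $p$ so that $\int_{I^k}\theta^*(\omega-\omega')\neq 0$.

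The substantive part is step (iii): constructing $\omega$ from $c$. Working in a chart $(U,\varphi)$ around a point $p\in M$, I would define
\[
\omega_p(v_1,\dots,v_k) \;=\; \lim_{\epsilon\to 0}\,\frac{1}{\epsilon^k}\,c(\theta_{\epsilon,p,v_1,\dots,v_k}),
\]
where $\theta_{\epsilon,p,v_1,\dots,v_k}\colon I^k\to M$ is the smooth cube $(t_1,\dots,t_k)\mapsto \varphi^{-1}(\varphi(p)+\epsilon\sum_i t_i v_i)$ (with $v_i$ identified with vectors in $\RR^n$ via $\varphi$). Alternation of $c$ in its $k$ cubical directions, combined with the permutation action on $I^k$, immediately forces $\omega_p$ to be alternating in the $v_i$. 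Multilinearity in each $v_i$ is where subdivision invariance enters: scaling $v_i$ by $s\in[0,1]$ amounts to precomposing with $\alpha_i(s)$, so subdivision invariance together with the rescaling of $\epsilon$ translates into linearity. Smoothness of $\omega$ as a function of $p$ would follow from smoothness of the evaluation of $c$ on smoothly varying families of cubes, which one should include (or derive) as part of the regularity hypothesis in force.

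Finally, to prove the key identity $c(\theta)=\int_{I^k}\theta^*\omega$ for an arbitrary smooth $\theta\colon I^k\to M$, I would subdivide $I^k$ into $N^k$ congruent subcubes by iterated application of the $\alpha_i(s),\beta_i(s)$ decomposition. Subdivision invariance expresses $c(\theta)$ as the sum of $c(\theta|_{C})$ over the subcubes $C$; for each subcube of side $1/N$, the definition of $\omega$ identifies $c(\theta|_C)$ with $N^{-k}\omega_{\theta(x_C)}(\partial_1\theta,\dots,\partial_k\theta) + o(N^{-k})$ at the center $x_C$ of $C$. Summing and sending $N\to\infty$ recognizes the right-hand side as a Riemann sum for $\int_{I^k}\theta^*\omega$.

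The main obstacle is the last step: one must show, uniformly in $\theta$, that the subcube contributions $c(\theta|_C)$ are accurately approximated by the pointwise evaluation of $\omega$, with an error $o(N^{-k})$ that sums to zero. This requires a controlled Taylor expansion of $\theta$ on each subcube, together with a boundedness/continuity estimate for $c$ evaluated on families of cubes of comparable size; here one must leverage smoothness of the association $\theta\mapsto c(\theta)$ to get uniform-in-$C$ control, so that the Riemann-sum limit genuinely equals $\int_{I^k}\theta^*\omega$.
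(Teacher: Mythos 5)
A point of reference first: the paper does not prove this statement at all --- it is quoted as Theorem 3.4.4 of Kock's book and used as a black box in the proof of Theorem \ref{A} --- so there is no internal proof to compare yours against. Your outline (define $\omega_p$ as a rescaled limit of $c$ on small affine cubes, then recover $c(\theta)=\int_{I^k}\theta^*\omega$ by Riemann sums via subdivision invariance) is the natural classical route and is, structurally, how one would translate Kock's synthetic argument.

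There is, however, a genuine gap, which you half-acknowledge and then defer. Every substantive step of your construction --- existence of the limit $\lim_{\epsilon\to 0}\epsilon^{-k}c(\theta_{\epsilon,p,v_1,\dots,v_k})$, linearity of $\omega_p$ in each slot, smoothness of $p\mapsto\omega_p$, and above all the uniform $o(N^{-k})$ error estimate needed for the Riemann-sum limit --- requires a regularity hypothesis on $c$ that is not present in the statement, and it cannot be ``derived as part of the regularity hypothesis in force'' because the statement is false without one: a cubical cochain here is an arbitrary linear functional on the free vector space on smooth cubes. For $k=1$ take $c(\theta)=f(\theta(1))-f(\theta(0))$ for an arbitrary non-smooth $f\colon M\to\RR$; this is alternating (vacuously, as $\mathfrak{S}_1$ is trivial) and subdivision invariant, yet it is $\int_I\theta^*\omega$ for a smooth $1$-form $\omega$ only when $f$ is smooth. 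In Kock's synthetic framework all maps are smooth by fiat, which is why no such hypothesis appears there; in the paper's classical adaptation the regularity is supplied elsewhere, by condition (c) in the definition of $\Mod(\uC_{\sbullet}(G))$. Two further soft spots in your step (iii): homogeneity $\omega_p(\dots,sv_i,\dots)=s\,\omega_p(\dots,v_i,\dots)$ does not follow from precomposition with $\alpha_i(s)$ alone, because the complementary piece $\theta\circ\beta_i(1-s)$ is an affine cube based at the shifted point (roughly $p+\epsilon s v_i$) rather than at $p$, so comparing it with $\theta_{\epsilon,p,\dots,(1-s)v_i,\dots}$ again needs continuity in the basepoint; and additivity in each slot is not a consequence of the coordinate-slab subdivisions $\alpha_i,\beta_i$ at all --- it requires the shear/prism argument, using that $c$ vanishes on cubes constant in some coordinate direction (which does follow, from subdivision invariance at $s=0$). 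With an explicit smoothness hypothesis on $c$ (smooth dependence on smooth families of cubes, as in condition (c) of the paper) added, your architecture goes through.
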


There is a natural map $\tau\colon \uC^k(M) \rightarrow \uC^k_{\square}(M)$ given by:
\[ \tau(c) (\theta) = \sum_{\chi \in \mathfrak{S}_k}  \mathrm{sgn}(\chi)c( \theta \circ \chi_*),\]
with $\chi_*\colon \Delta_k \rightarrow I^k$ as before. 

Recall that a finite simplicial complex is a finite set $X$ together with a subset $S \subseteq \mathcal{P}(X)$ of simplices that satisfies:
\begin{itemize}
\item If $s \in S$ then any subset of $s$ is also in $S$.
\item If $s$ and $s'$ are in $S$ then $s \cap s' \in S$.
\end{itemize}
There is a functor $\Delta\colon \mathbf{FSets} \rightarrow \mathbf{Top}$ from the category of finite sets to the category of topological spaces
given by 
\[\Delta(y)=\left\{ \sum_{y \in y} t_y y \mid t_y \geq 0, \sum_{y \in y} t_y=1\right\}.\]
Clearly $\Delta(y)$ is isomorphic to a simplex of dimension $d=\vert y\vert$. The geometric realization of a finite simplicial complex $X$ is the topological space
\[ \vert X\vert=\coprod_{s\in S}  \Delta(s)/{\sim},\]
where $\sim$ is the equivalence relation generated by the condition that if $\iota\colon s' \hookrightarrow s$ is an inclusion then
\[ \Delta(\iota)(x)\sim x.\]
A triangulation of a manifold is a simplicial complex $X$ together with a homeomorpohism $f \colon\vert X \vert \rightarrow M$. There is a natural projection map $\pi\colon \coprod_{s\in S}  \Delta(s) \rightarrow \vert X\vert$. A $k$-simplex of the triangulation $f$ is a map
$\pi^*f\colon \Delta(s) \rightarrow M$ where $s \in S$ is a simplex with $k+1$ elements. We will denote by $\Delta_k(f)$ the set of all simplices of the triangulation. We say that a triangulation is smooth if each of its simplices is a smooth map. We are interested in the special case where $M=I^k$. In this case we say that a triangulation is affine if each of the simplices of the triangulation is an affine map. We will assume that the set $X$ is totally ordered so that each of the spaces $\Delta(s)$ comes with a natural orientation.

\begin{definition}
Let $M$ be a manifold and $c \in \uC^{\sbullet}(M)$ a singular cochain .
We will say that $c$ is subdivision invariant if for any affine triangulation $f\colon \vert X\vert \rightarrow I^k$ of the cube and any smooth map $\theta\colon I^k \rightarrow M$, it satisfies
\[\tau(c)(\theta)=\sum_{\sigma \in \Delta_k(f)} \mathrm{sgn}(\sigma) c(\theta \circ \sigma),\]
where $\mathrm{sgn}(\sigma)$ is plus or minus one depending on whether $\sigma$ preserves or reverses the orientation.
\end{definition}

\begin{lemma}\label{division}
Let $c \in \uC^k(M)$ be a singular cochain in $M$ which is subdivision invariant. Then $\tau(a) \in \uC^k_{\square}(M)$ is also subdivision invariant.
\end{lemma}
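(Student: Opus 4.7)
The plan is to deduce the cubical subdivision relation for $\tau(c)$ from the singular subdivision invariance of $c$, by producing an affine triangulation of the cube $I^k$ that refines the standard one and is adapted to the slicing hyperplane $\{t_i = s\}$.

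First, I would recall that the standard affine triangulation $T_0$ of $I^k$ consists of the $k!$ simplices
\[\chi_*(\Delta_k) = \{t \in I^k : t_{\chi(1)} \ge \cdots \ge t_{\chi(k)}\}, \qquad \chi \in \mathfrak{S}_k,\]
each equipped with the orientation inherited from the affine map $\chi_*\colon \Delta_k \to I^k$, whose orientation sign relative to the ambient orientation of $I^k$ equals $\mathrm{sgn}(\chi)$. With this language, the very definition
\[\tau(c)(\theta) = \sum_{\chi \in \mathfrak{S}_k} \mathrm{sgn}(\chi)\, c(\theta \circ \chi_*)\]
is the statement that $\tau(c)(\theta)$ is computed from $c$ via the triangulation $T_0$, for every smooth $\theta\colon I^k\to M$.

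Next, I would fix $i \in \{1,\dots,k\}$ and $s \in [0,1]$ and build a refined affine triangulation $T_{i,s}$ of $I^k$ by cutting along the hyperplane $\{t_i = s\}$ into two subcubes and applying the standard $k!$-simplex decomposition inside each. The reparametrizations $\alpha_i(s)$ and $\beta_i(1-s)$ identify $I^k$ with the lower and upper subcubes respectively, so the simplices of $T_{i,s}$ are precisely $\alpha_i(s) \circ \chi_*$ and $\beta_i(1-s) \circ \chi_*$ for $\chi \in \mathfrak{S}_k$, with orientation signs $\mathrm{sgn}(\chi)$ once one verifies that the two reparametrizations are orientation-preserving. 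Applying the subdivision-invariance hypothesis for $c$ to the triangulation $T_{i,s}$ and the map $\theta$ then gives
\[\tau(c)(\theta) = \sum_{\sigma \in \Delta_k(T_{i,s})} \mathrm{sgn}(\sigma)\, c(\theta \circ \sigma).\]

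Finally, I would split the sum on the right according to which of the two subcubes each simplex lies in and regroup using the definition of $\tau$ applied to the reparametrized maps $\theta \circ \alpha_i(s)$ and $\theta \circ \beta_i(1-s)$, obtaining
\begin{align*}
\tau(c)(\theta) &= \sum_{\chi \in \mathfrak{S}_k} \mathrm{sgn}(\chi)\, c\bigl(\theta \circ \alpha_i(s) \circ \chi_*\bigr) + \sum_{\chi \in \mathfrak{S}_k} \mathrm{sgn}(\chi)\, c\bigl(\theta \circ \beta_i(1-s) \circ \chi_*\bigr) \\
&= \tau(c)\bigl(\theta \circ \alpha_i(s)\bigr) + \tau(c)\bigl(\theta \circ \beta_i(1-s)\bigr),
\end{align*}
which is precisely the cubical subdivision invariance for $\tau(c)$.

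The main obstacle is the orientation bookkeeping in the middle step: one must verify carefully that each simplex $\alpha_i(s) \circ \chi_*$ and $\beta_i(1-s) \circ \chi_*$ appears in $T_{i,s}$ with sign exactly $\mathrm{sgn}(\chi)$, without any extra factor coming from the Jacobian of the reparametrizations. Once this purely linear-algebra check is in place, the remainder of the argument is a formal regrouping of sums.
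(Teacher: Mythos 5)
Your proposal is correct and follows essentially the same route as the paper: choose an affine triangulation of $I^k$ adapted to the cut $\{t_i=s\}$, apply the subdivision invariance of $c$ to it, and split the resulting sum over the two subcubes. The only difference is that you make the triangulation explicit (the union of the standard $k!$-simplex decompositions of the two subcubes), which lets you read off the two sub-sums directly from the definition of $\tau$, whereas the paper works with an arbitrary triangulation restricting to the subcubes; the orientation check you flag (that $\alpha_i(s)$ and $\beta_i(1-s)$ are orientation-preserving, so each simplex carries sign $\mathrm{sgn}(\chi)$) is indeed the only point requiring care.
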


\begin{proof}
Fix a map $\theta\colon I^k \rightarrow M$. We need to prove that:
\[ \tau(c)(\theta) = \tau(c) (\theta \circ \alpha_i(s)) + \tau(c)( \theta \circ \beta_i(1-s)).\]
Let us fix a triangulation $f\colon \vert X\vert \rightarrow I^k$ of the cube that restricts to triangulations $\widehat{f}$ of $I^{i-1} \times [0,s]\times I^{k-i}$ and $\widetilde{f}$ of $I^{i-1} \times [s,1]\times I^{k-i}$. Then
\begin{eqnarray*}
\tau(c)(\theta)&=&\sum_{\sigma \in \Delta_k(f)} \mathrm{sgn}(\sigma) c(\sigma) \\
&=& \sum_{\sigma \in \Delta_k(\widehat{f})} \mathrm{sgn}(\sigma) c(\sigma)+ \sum_{\sigma \in \Delta_k(\widetilde{f})} \mathrm{sgn}(\sigma) c(\sigma)\\
&=& \tau(c) (\theta \circ \alpha_i(s)) + \tau(c)( \theta \circ \beta_i(1-s)). \qedhere
\end{eqnarray*}
\end{proof}

\begin{definition}
A singular cochain $c \in \uC^k(M)$ is alternating if $\tau(c)$ is alternating in $\uC^k_{\square}(M)$.
\end{definition}

\begin{definition}
For each $k\geq 1$ we define a map:
\[ P_k\colon I^k \rightarrow \Delta_k, \quad (t_1, \dots, t_k) \mapsto (y_1, \dots, y_k),\]
where
\[ y_i=\max(t_i, \dots, t_k).\]
\end{definition}

\begin{lemma}\label{chi}
Let $\iota\colon \Delta_k \rightarrow I^k$ be the inclusion map. Then
\[ P_k \circ \iota =\id_{\Delta_k}.\]
Moreover, given any permutation $\chi \in \mathfrak{S}_k$ which is different from the identity, the image of the map $P_k \circ \sigma_\chi$ lies on the boundary of $\Delta_k$.
\end{lemma}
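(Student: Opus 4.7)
The plan is to handle the two assertions separately, both by direct computation using the definitions of $\Delta_k$ and $P_k$.

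For the first assertion, note that if $(t_1,\dots,t_k)\in\Delta_k$ then $t_1\geq t_2\geq\cdots\geq t_k$, so for each $i$ the maximum $\max(t_i,\dots,t_k)$ is attained at the first entry and equals $t_i$. Hence $P_k(\iota(t_1,\dots,t_k))=(t_1,\dots,t_k)$, which proves that $P_k\circ\iota=\id_{\Delta_k}$. This is a one-line verification.

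For the second assertion, I would first observe the following convenient rewriting. For $(t_1,\dots,t_k)\in\Delta_k$ and $\chi\in\mathfrak{S}_k$, write $(P_k\circ\chi_*)(t_1,\dots,t_k)=(y_1,\dots,y_k)$, where by definition
\[ y_i=\max(t_{\chi(i)},t_{\chi(i+1)},\dots,t_{\chi(k)}). \]
Since the $t_j$ are weakly decreasing in $j$, the maximum of $\{t_{\chi(i)},\dots,t_{\chi(k)}\}$ is attained at the \emph{smallest} index in the list. Thus $y_i=t_{m_i}$ where $m_i:=\min\{\chi(i),\chi(i+1),\dots,\chi(k)\}$. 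The boundary of $\Delta_k$ consists of the loci where one of the defining inequalities becomes an equality, in particular the faces $\{y_i=y_{i+1}\}$.

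The main (but still very short) step is to exhibit such a collapse whenever $\chi\neq\id$. Let $i$ be the smallest index with $\chi(i)\neq i$; then $\chi(j)=j$ for all $j<i$, so $\chi$ permutes $\{i,i+1,\dots,k\}$ among themselves, and since $\chi(i)\neq i$ one has $\chi(i)>i$. Consequently $\{\chi(i),\dots,\chi(k)\}=\{i,\dots,k\}$ and $\{\chi(i+1),\dots,\chi(k)\}=\{i,\dots,k\}\setminus\{\chi(i)\}$, which still contains $i$. Therefore $m_i=i=m_{i+1}$, giving $y_i=t_i=y_{i+1}$, and the image point lies on the face $\{y_i=y_{i+1}\}$ of $\Delta_k$. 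No step here is really an obstacle; the only thing to be careful about is the observation that weakly decreasing coordinates turn a tail-maximum into a lookup at the smallest index, which is what makes the combinatorial argument go through.
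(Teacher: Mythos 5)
Your proof is correct and follows essentially the same route as the paper's: the first part is the same one-line check, and for the second part the paper merely asserts that every point in the image of $P_k\circ\sigma_\chi$ has two equal coordinates, which is exactly the fact your argument about $m_i=\min\{\chi(i),\dots,\chi(k)\}$ establishes (in the sharper form $y_i=y_{i+1}$ for the first non-fixed index $i$, noting $i<k$). You have simply supplied the combinatorial detail the paper leaves implicit.
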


\begin{proof}
Clearly,
\[ P_k \left( \iota (t_1, \dots, t_k) \right)=(t_1,\dots, t_k).\]
On the other hand, if $\chi$ is different from the identity then any point in the image of $P_k \circ \sigma_\chi$ will have
at least two coordinates which are equal.
\end{proof}

\begin{theorem}\label{A}
Let $c \in \uC^k(M)$ be a singular cochain which is alternating and subdivision invariant and vanishes on thin simplices. Then, there exists a unique differential form $\omega  \in \Omega^k(M)$ such that:
\[ c(\sigma)= \int_{\Delta_k} \sigma^*\omega,\]
for all simplices $\sigma\colon \Delta_k \rightarrow M$.
\end{theorem}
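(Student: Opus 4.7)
The plan is to reduce to Kock's cubical Theorem \ref{Kock} via the symmetrization map $\tau$ and then transport the result back to simplices via the collapse map $P_k$. First, the hypotheses on $c$ push through $\tau$: alternating is built into the definition of ``alternating singular cochain,'' and Lemma \ref{division} shows that subdivision invariance of $c$ implies subdivision invariance of $\tau(c)$ in the cubical sense. Theorem \ref{Kock} then produces a unique differential form $\omega \in \Omega^k(M)$ satisfying $\tau(c)(\theta) = \int_{I^k} \theta^*\omega$ for every smooth $\theta \colon I^k \to M$. Uniqueness of $\omega$ in the simplex formulation will follow automatically, since a form whose integrals over all smooth simplices vanish must vanish pointwise.

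To recover $c(\sigma) = \int_{\Delta_k} \sigma^*\omega$ for a smooth simplex $\sigma \colon \Delta_k \to M$, I evaluate $\tau(c)$ on the specific cube $\sigma \circ P_k \colon I^k \to M$ in two different ways. Expanding the definition of $\tau$ gives
\[
\tau(c)(\sigma \circ P_k) = \sum_{\chi \in \mathfrak{S}_k} \mathrm{sgn}(\chi)\, c(\sigma \circ P_k \circ \chi_*).
\]
Lemma \ref{chi} yields $P_k \circ \chi_* = \mathrm{id}_{\Delta_k}$ when $\chi = \mathrm{id}$, while for every other $\chi$ the composition factors through $\partial \Delta_k$, so that $\sigma \circ P_k \circ \chi_*$ has differential of rank strictly less than $k$ everywhere and is therefore thin. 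By hypothesis $c$ vanishes on thin simplices, so only the identity term survives and contributes $c(\sigma)$. On the other hand, Kock's formula applied to $\sigma \circ P_k$ together with the decomposition $I^k = \bigcup_\chi \chi_*(\Delta_k)$ and the change-of-variables formula give
\[
\tau(c)(\sigma \circ P_k) = \int_{I^k} P_k^*\sigma^*\omega = \sum_{\chi \in \mathfrak{S}_k} \mathrm{sgn}(\chi) \int_{\Delta_k} (P_k \circ \chi_*)^*\sigma^*\omega,
\]
where $\mathrm{sgn}(\chi)$ records the orientation of the linear map $\chi_*$. By the same rank-defect argument the pulled-back top form vanishes identically for $\chi \neq \mathrm{id}$, and the surviving $\chi = \mathrm{id}$ term equals $\int_{\Delta_k} \sigma^*\omega$. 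Equating the two expressions for $\tau(c)(\sigma \circ P_k)$ gives $c(\sigma) = \int_{\Delta_k} \sigma^*\omega$.

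The main obstacle I anticipate is the orientation bookkeeping in the second computation. The map $P_k$ is only piecewise linear (it involves maxima), so the pullback $P_k^*\sigma^*\omega$ must be interpreted on each closed simplicial piece $\chi_*(\Delta_k)$ separately, where $P_k$ restricts to an affine map. One then has to verify that the orientation sign appearing in the change-of-variables formula on each such piece is precisely the $\mathrm{sgn}(\chi)$ built into the definition of $\tau$, so that the two independent computations of $\tau(c)(\sigma \circ P_k)$ rigorously match term by term. Once this sign alignment is nailed down, every other step is a routine invocation of Lemma \ref{chi} and the thin-simplex hypothesis.
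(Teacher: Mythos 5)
Your proposal is correct and follows essentially the same route as the paper's own proof: reduce to Kock's cubical theorem via $\tau$ and Lemma \ref{division}, then evaluate $\tau(c)(\sigma\circ P_k)$ in two ways, using Lemma \ref{chi} together with the thin-simplex hypothesis (and the corresponding rank-defect vanishing of the pulled-back top form) to kill all non-identity permutation terms. The orientation bookkeeping you flag is real but is exactly the sign convention the paper builds into $\tau$ and into the decomposition of $I^k$ by the shuffle images, so no further idea is needed.
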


\begin{proof}
By Lemma \ref{division} we know that $\tau(c)$ is subdivision invariant. Since $\tau(c)$
is alternating, Theorem \ref{Kock} implies that there is a differential form $\omega \in \Omega^k(M)$ such that
\[ \tau(a)(\theta)=\int_{I^k} \theta^*\omega,\]
for all $\theta\colon I^k \rightarrow M$. We claim that for any $\sigma\colon \Delta_k \rightarrow M$ one has:
\[ c(\sigma)= \int_{\Delta_k} \sigma^*\omega.\]
Set $\theta = \sigma \circ P_k$.  Then
\begin{eqnarray*}
 \int_{I^k} \theta^*\omega= \sum_{\chi \in \mathfrak{S}_k}\int_{\Delta_k} \mathrm{sgn}(\chi)(\theta \circ \sigma_\chi)^*\omega= \sum_{\chi \in  \mathfrak{S}_k} \int_{\Delta_k}\mathrm{sgn}(\chi)(\sigma \circ P_k \circ \sigma_\chi)^*\omega =\int_{\Delta_k} \sigma^*\omega.
\end{eqnarray*}
Where, we have used Lemma \ref{chi} to conclude that only the integral associated to the identity is nonzero, because all other simplices are thin. On the other hand,
\begin{eqnarray*} 
\int_{I^k} \theta^*\omega&=& \tau(c)(\theta)= \sum_{\chi \in  \mathfrak{S}_k} \mathrm{sgn}(\chi) c(\sigma \circ P_k \circ \sigma_\chi)=c(\sigma).
\end{eqnarray*}
Where, again, we have used Lemma \ref{chi} and the fact that $c$ vanishes on thin simplices to conclude that only the integral associated to the identity is nonzero. This shows that $c$ is given by integrating $\omega$ as required. The uniqueness of $\omega$ is clear.
\end{proof}


\thebibliography{10}

\bibitem{AM}
A. Alekseev, E. Meinrenken,\
\newblock Lie theory and the Chern-Weil homomorphism.
\newblock {\em Annales Scientifiques de l'{\'E}cole Normale Sup{\'e}rieure}, 2005 (2), 303-338.

\bibitem{AM2}
A. Alekseev, E. Meinrenken,
\newblock The non-commutative Weil algebra.
\newblock {\em Invent. Math. }139 (2000), no. 1, 135--172.

\bibitem{CA}
C. Arias Abad, A. Quintero V\'elez, S. Pineda Montoya,
{\em Chern-Weil theory for $\infty$-local systems}, to appear.

\bibitem{CAII}
C. Arias Abad, A. Quintero V\'elez,
{\em Singular chains on Lie groups and the Cartan relations II}, arXiv:2007.07934 .

\bibitem{AS2013}
C.~Arias Abad and F.~Sch{\"a}tz,
\newblock The $\mathsf{A}_{\infty}$ de-{R}ham theorem and integration of
  representations up to homotopy.
\newblock {\em Int. Math. Res. Notices}, 2013(16):3790--3855, 2013.

\bibitem{AS2016}
C.~ Arias Abad and F.~Sch{\"a}tz,
\newblock Flat $\mathbb{Z}$-graded connections and loop spaces.
\newblock {\em Int. Math. Res. Notices}, 2018(4):961--1008, 2016.

\bibitem{BS}
J.~Block and A.~M. Smith,
\newblock The higher {R}iemann-{H}ilbert correspondence.
\newblock {\em Adv. Math.}, 252:382--405, 2014.

\bibitem{GS}
V. Guillemin, S. Sternberg,
\newblock {\em Supersymmetry and equivariant De Rham theory}.
\newblock Springer Verlag, 2013 .

\bibitem{Gugenheim}
V.~Gugenheim,
\newblock On {C}hen's iterated integrals.
\newblock {\em Illinois J. Math.}, 21(3):703--715, 1977.

\bibitem{Holstein1}
J.~V. Holstein,
\newblock Morita cohomology.
\newblock In {\em Math. Proc. Camb. Philos. Soc.}, volume 158, pages 1--26.
  CambriDGe University Press, 2015.

\bibitem{Igusa}
K. Igusa,
{\em Iterated integrals of superconnections}, \texttt{arxiv:0912.0249}.

\bibitem{Kock}
A. Kock,
\newblock Synthetic geometry of manifolds.
\newblock Cambridge Tracts in Mathematics, Vol 118, 2009.

\end{document}